\newcolumntype{f}{>{$}l<{$}}
\newcolumntype{n}{l}
\newcolumntype{N}{>{\scriptsize}l}
\newcolumntype{v}[1]{>{\raggedright\hspace{0pt}}p{#1}}
\newcolumntype{V}[1]{>{\scriptsize\raggedright\hspace{0pt}}p{#1}}
\newcolumntype{B}[1]{>{\boldmath\DC@{.}{,}{#1}}l<{\DC@end}}
\newcolumntype{d}[1]{>{\DC@{.}{.}{#1}}l<{\DC@end}}
\newcolumntype{i}[1]{>{\DC@{.}{.}{#1}\mathnormal\bgroup}l<{\egroup\DC@end}}
\newcolumntype{s}[1]{>{\DC@{.}{.}{#1}\mathsf\bgroup}l<{\egroup\DC@end}}
\newcolumntype{R}[1]{%
  >{\begin{turn}{90}\begin{minipage}{#1}\scriptsize\raggedright\hspace{0pt}}l%
  <{\end{minipage}\end{turn}}%
}
\newcolumntype{x}{>{\scriptsize\raggedright\hspace{0pt}}X}
\newcommand{\rmnum}[1]{\romannumeral #1}
\newcommand{\Rmnum}[1]{\expandafter\@slowromancap\romannumeral #1@}
\newcommand\given[1][]{\:#1\vert\:}
\newcommand{\var}{\mathrm{Var}}
\newcommand{\smallocal}{\mbox{\scriptsize$\ocal$}}
\newcommand{\jiaqi}[1]{\textcolor{brown}{Jiaqi: #1}}
\title[{Semiparametric Inference for Left-Censored   Models}]{\normalsize \sc Robust Confidence Intervals  in \\[0.5ex] {\normalsize  \sc High-Dimensional  Left-Censored Regression}}
\author{Jelena Bradic and Jiaqi Guo}
\address{Department of Mathematics\\
University of California San Diego\\
La Jolla, CA, 92093}
\begin{document}

\maketitle

\begin{abstract}
This paper  develops   robust  confidence intervals in high-dimensional and left-censored regression.   Type-I censored regression models are extremely  common in practice, where a competing event   makes the variable of interest   unobservable.  However, techniques developed for entirely observed data do not directly apply to the censored observations.   In this paper, we develop smoothed estimating equations   that  augment the   de-biasing method, such that
   the resulting  estimator  is adaptive to   censoring and is more robust to the misspecification  of the error distribution.     We propose a unified class of robust estimators, including Mallow's, Schweppe's and Hill-Ryan's  one-step  estimator. 
      In the ultra-high-dimensional setting, where the dimensionality
can grow exponentially with the sample size, we show that as long as the preliminary estimator converges faster than $n^{-1/4}$, the one-step estimator inherits asymptotic distribution of  fully iterated version. 
Moreover, we    show   that the size  of the residuals of the Bahadur representation matches those of  the simple linear models, $s^{3/4 } (\log (p \vee n))^{3/4} / n^{1/4}$  -- that is,  the effects of censoring asymptotically disappear. 
Simulation studies demonstrate that    our method is adaptive to
the censoring level  and asymmetry in the error distribution, and does not lose efficiency when the  errors are from symmetric distributions. Finally,  we  apply the developed method to  a real data set from the MAQC-II repository that is related to the  HIV-1 study.
\end{abstract}
%
%

 \section{Introduction}
 
 Left-censored data is a characteristic of many datasets. In physical science applications, observations can be censored due to
limit of detection and quantification in the measurements. For example, if a measurement device has a value limit on the lower end, the observations is recorded with the minimum value, even though the actual result is below the measurement range. In fact, many of the HIV studies have to deal with difficulties due to the lower quantification and detection limits of viral load assays \citep{S14}. In social science studies, censoring may be implied in the nonnegative nature or defined through human actions. Economic policies such as minimum wage and minimum transaction fee result in left-censored data, as quantities below the thresholds will never be observed.  With advances in modern data collection, high-dimensional data where the number of variables, p, exceeds the number of observations, n, are becoming more and more commonplace. HIV studies are usually complemented with observations about genetic signature of each patient, making the problem of finding the association between the number of viral loads and the gene expression values extremely high dimensional.  Hence, it is important   to develop inferential methods for left-censored and high-dimensional data.

A  general approach to
estimation of the unknown parameter $\betab$ in high dimensional settings, is given by the penalized M-estimator
\[
\hat \betab = \arg \min _{\betab \in \RR^p} \left\{  l(\betab) + P_{\lambda}{(\betab)} \right\},
\]
where  $l(\betab)$ is a loss function (e.g., the negative log-likelihood) and $P_{\lambda}(\betab)$ is
a penalty function with a tuning parameter $\lambda$. Examples include but are not limited to the Lasso, SCAD, MCP, etc. 
Significant progress has been made towards understanding the
estimation theory of penalized M-estimators with recent breakthroughs in 
quantifying the uncertainty of the obtained results.
However, no general theory exists for  high-dimensional estimation in the setting of left-censored data, not to mention  for understanding their uncertainty. 
 A few challenges of left-censored data are particularly difficult even in low-dimensional settings. Left-censored models rarely obey particular distributional forms, preventing the use of likelihood theory and  demanding for estimators that are semi-parametric in nature. For the same reasons, the estimators need  to be robust to  the presence of outliers in the design or model error. Lastly, theoretical results cannot be obtained using naive Taylor expansions and require the development of novel concentration of measure results.
 
 To bridge this gap, this
paper proposes a new mechanism, named as {\it smoothed estimating equations} (SEE) and {\it smoothed robust estimating equations} (SREE), for construction of  confidence intervals 
  for low-dimensional
components in high-dimensional left-censored models. For a high-dimensional parameter of interest $\betab \in \RR^p$, we aim to provide confidence intervals $(L_j, U_j)$ for any of its coordinates $j \in \{1,\dots, p \}$ while adapting to the left-censored nature of the problem. No distributional assumption will be made on the model error.  T The proposed estimators and confidence intervals are thus semiparametric. The main challenge in such setting is the non-differentiability of many of semiparametric loss functions, e.g, the least absolute deviation (LAD) loss. To handle this challenge, we apply
a  smoothing operation on the high dimensional  estimating equations, so that
the obtained SEE become smooth in the underlying $\beta_j$. Moreover, SEE are designed to handle high-dimensional model parameters and hence differ from the classical approaches of estimating equations. Although we consider left-censored models, the proposed SEE equations are  quite general and can apply to any non-differentiable loss function even with fully observed data. For example, they can  provide valid confidence sets using penalized rank estimator with both convex and non-convex penalties. 

We establish theoretical asymptotic coverage for confidence intervals
while allowing left-censoring and $p \gg n$. Moreover, for the estimators resulting from the SEE and SREE equations, we provide delicate Bahadur representation  and establish the order of the residual term. Under mild conditions, we show that the effects of censoring asymptotically disappear,  a result that is novel and of independent interest even in low-dimensional setting. Additionally, we establish a number of new uniform concentration of measure results particularly useful for many left-censored models. 

To further broaden our framework we   formally develop   robust Mallow's, Schweppe's and Hill-Ryan's estimators that adapt to the unknown censoring. We believe these estimators to be novel even in low-dimensional setting. This generalizes the classical
robust   theory developed by \cite{hampel_1986}.
We point out that the SEE framework can be viewed as an  extension of the de-biasing framework of \citep{zhang_zhang_2014}. In particular, the confidence intervals resulting from the SEE estimator   are asymptotically equivalent to  the confidence intervals of de-biasing methods in the case of a smooth loss function and non-censored observations.    However, SREE confidence sets provide robust alternative to the naive de-biasing as the resulting inference procedures are robust to the distributional misspecifications, and most appropriate for applications with extremely skewed observations.

 \subsection{Related Work}
 Given the prevalence of left-censored data, a large body of work in model estimation and inference has been dedicated to the topic. Estimation in the left-censored models has been studied since the 1950's. \cite{tobin_1958} first proposed the model with a nonnegative constraint on the response variable, which is also known as the Tobit-I model. Later, \cite{Amemiya_1973} proposed a maximum likelihood estimator  where a data transformation model is considered, and then impose a class of distributions for the resulting model error. However, as Zellner has noted \citep{Z96}, knowledge of the underlying data generating mechanism is seldom available, and thus models with parametric distributions may be subject to the  distributional misspecification. 
\cite{powell_1986a}, \cite{powell_1986b}, and \cite{NP90} pioneered   the development of robust inference procedures for the left-censored data, and relieved the assumption on model error distribution in prior work. \cite{powell_1984} introduced a LAD estimator, whereas \cite{P97}  introduced robust estimators and inference based on maximum entropy principles. \cite{Z15} proposed an alternative robust two-step estimator, while \cite{S11} and \cite{Z14} developed  distribution free and  rank-based  tests. For these models, the common assumption is that $p \leq n$.
 
For high-dimensional models,  and with Lasso being the cornerstone of achieving sparse estimators  \citep{T96}, numerous efforts have been made on establishing finite sample risk oracle inequalities of penalized estimators; examples include   \cite{greenshtein_ritov_2004}, \cite{bunea_tsybakov_wegkamp_2007}, \cite{candes_tao_2007}, \cite{vdgeer_2008}, \cite{zhang_huang_2008}, \cite{bickel_ritov_tsybakov_2009}, \cite{meinshausen_yu_2009} and\cite{negahban_ravikumar_wainwright_yu_2012}. Regarding censored data, \cite{muller_vdgeer_2014} offered a penalized version of   Powell's estimator. 
However, substantially smaller efforts have been made toward high-dimensional inference, namely  confidence interval construction and statistical testing in the uncensored high-dimensional setting, not to mention in the censored high-dimensional setting. Recently, \cite{javanmard_montanari_2014}, \cite{vdgeer_buhlmann_ritov_dezeure_2014} and \cite{zhang_zhang_2014}  have corrected the bias of high-dimensional regularized estimators by projecting its residual to a direction close to that of the efficient score. Such technique, named de-biasing, is parallel to
 the bias correction of the nonparametric estimators in the semiparametric inference  literature \citep{bickel_klaassen_ritov_wellner_1998}.   \cite{vdgeer_buhlmann_ritov_dezeure_2014}  considered an extension of this technique to  generalized linear model, while \cite{sun_zhang_2012b} and \cite{r15} considered extensions to  graphical models. 
\cite{B14} developed a three-step bias correction technique for quantile estimation. 
For inference in censored high-dimensional linear models, to the best of our knowledge, there has been no prior work.
   It is worth pointing out that the main contribution of this paper is in understanding fundamental limits of semiparametric inference for left-censored models.

 \subsection{Organization of the Paper}
 
 In Section 2, we propose the  smoothed estimating equations (SEE) for left-censored linear models. In Section 3, we establish general results for   confidence regions and the  Bahadur representation of  the SEE estimator. We also emphasize on  the new concentration of measure results, the building blocks of the  main theorems.  In Section 4, we  develop robust and left-censored Mallow's, Schweppe's and Hill-Ryan's estimators and present their theoretical analysis.  Section 5 provides numerical results on simulated and real data sets.
We defer technical details to the Supplementary Materials.

     \section{Smoothed Estimating Equations for Left-Censored \\ High-Dimensional Models}
     
     We begin by introducing
a general modeling framework followed by    highlighting the
difficulty for directly applying  existing inferential methods (such are de-biasing, score, Wald, etc.) to the models with  left-censored observations. Finally, we propose a new mechanism, named  smoothed estimating equations, to construct  semi-parametric confidence regions in high-dimensions.
     
     \subsection{Left-Censored Linear Model}
     We consider the problem of confidence interval construction where we observe a 
  vector of responses $Y=(y_1,\dots,y_n)$ and their  censoring level $c=(c_1,\dots, c_n)$ together with covariates $X_1,\dots X_p$. 
 The type of statistical inference under consideration   is regular in the sense that it
does not require model selection consistency. A characterization of such inference
is that it does not require a uniform signal strength  in the model.
Since ultra-high dimensional data often display heterogeneity,
we advocate  a robust confidence interval framework.
 We  begin with the following
latent regression model: 
\begin{align*}
y_i= \max\left\{c_i,x_i \betab^*+\varepsilon_i\right\},
\end{align*}
where the  response, $Y$, and the  censoring level, $c$, are observed and the vector $\betab^* \in \RR^p$ is unknown.  This model is often called  the semi-parametric censored regression model, whenever the distribution of the error, $\varepsilon$, is not specified.  
We assume that $\{\varepsilon_i\}_{i=1}^n$ are independent
across $i$  and are independent of $x_i$. Matrix $X=\left[X_1,\cdots,X_p\right]$ is the $n\times p$ design matrix, with $x_i$ being the $i^{th}$ row.    We also denote $S_{\betab}:=\{j|\betab_j\neq0\}$ as the active set of variables and its cardinality by $s_{\betab}:=|S_{\betab}|$.   We restrict our study to constant-censored model, also called Type-I Tobit model, where each entry of the censoring vector $c$ is the same. Without loss of generality, we focus on the zero-censored model  
\begin{align}\label{model}
y_i = \max\left\{0,x_i\betab^*+\varepsilon_i\right\}.
\end{align}
       For the censored model \eqref{model} but when $p \leq n$, Powell introduced a censored least absolute deviation loss (CLAD), where 
     \[
     l (\betab, y_i,x_i) = |y_i - \max \{ 0, x_i \betab\}|.
     \]

     \subsection{Challenges of Existing High-Dimensional Methods}
     
Although great progress has already been made in understanding the hypothesis testing in high-dimensions, directly applying existing methods to the case of left-censored observations might present a challenge.
      Inference for robust losses in the presence of  censoring is particularly difficult \citep{WF09} even in low-dimensional setting, and it is well known that left-censoring results do not extend from the results of fully observed data.
     A similar paradigm exists in high-dimensions.   
  Several problems are  immediately evident. First, if  observations are censored, there  will hardly be a model error that belongs to the family of unimodal distributions. Thus, it is necessary to make a method that works equally well with symmetric and asymmetric distributions. 
  In other words, a robust method is preferred over   maximum likelihood or least squares approaches. Second, the optimal inference function depends on the model censoring.   In particular, population Hessian matrix for the left-censored data does not have the simple form irrespective of the left-censoring.  Therefore,  methods that ignore censoring will not be efficient; vanilla de-biasing \citep{javanmard_montanari_2014} and \citep{zhang_zhang_2014} can produce biased and conservative confidence intervals with much larger width.  Third, the model itself is non-linear and is not well approximated by an additive linear model.  Therefore, additive models, although very flexible do not apply to the problem we consider.    
Although inference in high-dimensions that addresses the first concern has already been proposed and include an LAD-based inferential procedure \citep{Belloni2013Uniform}, a score based procedure  \citep{YL16} and a quantile-based procedure \citep{Z16}, neither of these address the second challenge arriving from the left-censored nature of the data, hence can be highly inefficient.

     \subsection{Smoothed Estimating Equations (SEE)} 
 Our estimator is  motivated by the principles of estimating equations. 
We begin by observing that 
the true parameter vector $\betab^*$ satisfies the  population system of equations
     \begin{equation} \label{eq:naive}
      \EE \Bigl[ \Psi(\betab^*) \Bigl]=0.
     \end{equation}
     where $\Psi (\betab) = n^{-1} \sum_{i=1}^n \psi_i(\betab)$ for a class of suitable functions $\psi_i$. For the CLAD loss 
     \[
\psi_i(\betab)= \mbox{sign}\left(y_i-\max\{0,x_i\betab\}\right)   w_i^\top (\betab) 
 \]
 and  $w_i (\betab) = x_i \ind\{ x_i \betab>0\}$.
     In high-dimensional setting, where $p \geq n$ solving estimating equations $\Psi(\betab)=0$ has several drawbacks. In particular,  for semi-parametric estimation and inference in model \eqref{model},  the function $\Psi$ is non-monotone as the loss is non-differentiable or non-convex. Hence, the system above   has multiple roots resulting in an estimator that is ill-posed.   
     Instead of solving the system \eqref{eq:naive} directly, we augment it
       by observing that, for a suitable choice of the matrix  ${\boldsymbol {\Upsilon}} \in \RR^{p \times p}$,  $\betab^*$ also satisfies  the system of equations 
     \begin{equation}\label{see1}
     \EE[\Psi(\betab^*)] + {\boldsymbol {\Upsilon}} [\betab -\betab^*] = 0.
     \end{equation}
     To avoid difficulties with non-smoothness of $\Psi$, we propose to consider with a matrix   
     ${\boldsymbol {\Upsilon}} = {\boldsymbol {\Upsilon}}(\betab^*)$, where the matrix ${\boldsymbol {\Upsilon}} (\betab^*)$ is defined as 
     \[
     {\boldsymbol {\Upsilon}}(\betab)= \EE_X  \left[\nabla_{\betab} S (\betab ) \right]  ,
     \]
for a smoothed vector 
     $S (\betab )$  defined as 
\[
S (\betab )    =\int_{-\infty} ^ \infty   \Phi(\betab, x ) f_\varepsilon(x)dx.
\]
In the above display $\Psi(\betab^*) = \Phi (\betab^*, \varepsilon)$, for a suitable function $\Phi = n^{-1} \sum_{i=1}^n \phi_i$ and  $\phi_i: \RR^p \times \RR \to \RR $ whereas, $f_\varepsilon$ denotes the density of the model error \eqref{model}. Additionally,
 $\EE_X$ denotes expectation with respect to the random measure generated by the vectors $X_1,\dots, X_n$.  
    For the  Powel's CLAD loss, we observe that the smoothed  vector takes the form
   \begin{align}\label{psi_subgradient}
     S(\betab) = n^{-1} \sum_{i=1}^{n}\left[1-2P_\varepsilon\left(y_i-x_i\betab^*\leq0\right)\right]\left(w_i(\betab^*)\right)^T,
  \end{align}
  where $P_{\epsilon}$ denotes the probability measure generated by the errors $\varepsilon$  \eqref{model}.
 This   leads to  
$
\nabla_{\betab^*}  S(\betab^*) 
=  2f_\varepsilon(0) n^{-1} \sum_{i=1}^{n}$ $w_i(\betab^*)^Tw_i(\betab^*) $
and a matrix
 \begin{equation}\label{eq:ipsilon}
{\boldsymbol {\Upsilon}}(\betab^*) = 2f_\varepsilon(0) \EE_X \left[ n^{-1} \sum_{i=1}^{n}w_i(\betab^*)^Tw_i(\betab^*) \right] : = 2f_\varepsilon(0) \Sigmab(\betab^*).
 \end{equation}
   To infer the parameter $\betab^*$, we adapt a one-step approach. We can observe that  solving  SEE equations \eqref{see1} requires inverting the matrix ${\boldsymbol {\Upsilon}}(\betab^*) $, as we are looking for a solution $\betab$ that satisfies
\[
     {\boldsymbol {\Upsilon}}(\betab^*)  \betab = {\boldsymbol {\Upsilon}}(\betab^*)   \betab^*   -  \EE \Psi( \betab^* ). 
\]
 For low-dimensional problems, with $p \ll n$, this can efficiently be done by considering an initial estimate $\hat \betab$
 and 
a sample plug-in estimate, ${\boldsymbol {\Upsilon}}(\hat \betab) $, of ${\boldsymbol {\Upsilon}}(\betab^*)$,
 \begin{equation}\label{eq:sigma}
 {\boldsymbol {\Upsilon}}(\hat \betab) = 2   n^{-1}  \hat f_\varepsilon(0)\sum_{i=1}^{n}w_i(\hat \betab )^Tw_i(\hat \betab )  : = 2 \hat f_\varepsilon(0) \hat \Sigmab(\hat \betab)
 \end{equation}
and sample estimate of  $\EE \Psi( \betab^* ) $, denoted with $  \Psi( \hat \betab  ) $.
 However,   when $p \gg n$ this is highly ineffective. Instead, it is more efficient to directly estimate ${\boldsymbol {\Upsilon}}^{-1}(\betab^*) =\Sigmab^{-1}(\betab^*)/2f_\varepsilon(0)$.  Let $\tilde \Omegab(\hat \betab ) $ be an estimate of ${\boldsymbol {\Upsilon}}^{-1}(\betab^*) $. Then, the SEE estimator is defined as 
 \[
    \tilde  \betab = \hat \betab  -  \tilde \Omegab(\hat \betab)  \Psi(\hat \betab ).
\]

Proposed SEE can be viewed as a high-dimensional extension of  inference from estimating equations.

     \begin{remark}
Although we consider a left-censored linear model, the proposed  SEE methodology  applies more broadly. For example,  our framework includes  loss functions based on ranks  or non-convex loss functions for the fully observed data.  For instance, the
method in  \cite{vdgeer_buhlmann_ritov_dezeure_2014}  is based on  inverting KKT conditions might not directly apply   for the non-convex loss functions (e.g., Cauchy loss) or rank loss functions (e.g., log-rank loss).
  \end{remark}

\subsubsection{Estimation of  the Scale in Left-Censored Models}
We will introduce the methodology for estimating each row of the matrix $\Sigmab^{-1}(\betab^*)$. For further analysis it is useful to define  $W(\betab)$ as a matrix composed of row vectors $w_i(\betab)$;  
$ W(\betab) =  A(\betab)X,$ 
where 
 $A(\betab) = \mbox{diag}\left(\ind\left(X {\betab} > 0\right)\right) \in \RR^n \times \RR^n.$ The methodology is motivated by the following simple observation:
\begin{align*}
\tau_j^{-2} \Gamma_{(j)}^\top \Sigmab(\betab^*)   = \eb_j,
\end{align*}
where 
$\Gammab_{(j)}(\betab^*) = \left[
- \gammab_{(j)}^*(\betab^*)_1, \cdots,  - \gammab_{(j)}^*(\betab^*)_{j-1},  1,  - \gammab_{(j)}^*(\betab^*)_{j+1},  \cdots,  - \gammab_{(j)}^*(\betab^*)_p
\right]$ with
\begin{align}\label{eq:optimization_pop}
\gammab_{(j)}^*(\betab) &:= \underset{\gammab  \in \RR^{p-1}}{\argmin} \  \mathbbm{E} \left\| W_j(\betab) - W_{-j}(\betab)\gammab \right\|_2^2/n
\end{align}
and 
$$ \tau_j^2 := n^{-1}\mathbbm{E} \left\| W_j(\betab^*) - W_{-j}(\betab^*)\gammab_{(j)}^*(\betab^*) \right\|_2^2.$$
This motivates us to consider the following as an estimator for the inverse $\Sigmab^{-1}(\betab^*)$. Let    $\hat{\gammab}_{(j)}(\hat \betab)$ and $\hat{\tau}_j^2$  denote the estimators of $\gammab_{(j)}^*(\betab^*)$ and $\tau_j^2$, respectively. 
We will show that a simple plug-in Lasso type estimator is sufficiently good for construction of confidence intervals. We propose to estimate $\gammab_{(j)}^*(\betab^*)$, with the following $l_1$ penalized plug-in least squares regression,
\begin{align}\label{nodewise_lasso}
\hat{\gammab}_{(j)}(\hat\betab) = \underset{\gammab  \in \RR^{p-1}}{\argmin} \left\{ n^{-1} \left\| W_j(\hat\betab) - W_{-j}(\hat\betab)\gammab \right\|_2^2 + 2\lambda_j \|\gammab\|_1 \right\}.
\end{align}
Notice that this regression does not trivially share all the nice properties of the penalized least squares, as in this case the rows of the design matrix are not independent and identically distributed. An estimate of  $\tau_j^2 $ can then be defined through the estimate of the residuals 
\begin{align} \label{eqn:zeta*}
\zetab_j^* := W_j(\betab^*) - W_{-j}(\betab^*)\gammab_{(j)}^*(\betab^*).
\end{align}
We propose the plug-in estimate for $\zetab_j^*$ as 
$
 \hat{\zetab}_j = W_j(\hat\betab) - W_{-j}(\hat\betab)\hat{\gammab}_{(j)}(\hat\betab),
$
 and a bias corrected  estimate  of $\tau_j^2 $ defined as
\begin{align}\label{tau_hat}
  \hat{\tau}_j^2 (\lambda_j)&= n^{-1}{\hat \zetab_j}^\top \hat \zetab_j +\lambda_j\left\|\hat{\gammab}_{(j)}(\hat{\betab})\right\|_{1}. 
\end{align}
Observe that the naive estimate $n^{-1}{\hat \zetab_j}^\top \hat \zetab_j $ does not suffice due to the bias carried over by the penalized estimate $\hat \gammab_{(j)}(\hat \betab)$.
Lastly, the matrix estimate of $\Sigmab^{-1}(\betab^*)$, much in the same spirit as \cite{zhang_zhang_2014}  is defined with
\begin{equation}\label{eq:theta}
\Omega_{jj}(\hat{\betab}) =  \hat{\tau}_j^{-2} ,
\qquad 
\Omega_{j,-j}(\hat{\betab}) = - \hat{\tau}_j^{-2} \hat{\gammab}_{(j)}(\hat \betab), \qquad j=1,\dots,p.
\end{equation}

 \begin{remark}
 The proposed scale estimate   can be considered as the censoring adaptive extension of the graphical lasso estimate of \cite{vdgeer_buhlmann_ritov_dezeure_2014}. 
Certainly,  there are alternative procedures for estimating  $\Sigmab^{-1}(\betab^*)$ with examples parallel to the Dantzig selector.
However, we believe, the  choice of tuning parameters for such estimates will depend on the unknown sparsity of $\betab^*$, thus   will be especially difficult to choose in practice.  
 \end{remark}

\subsubsection{Density Estimation}

Whenever the model considered is homoscedastic, i.e., $\varepsilon_i$ are identically distributed with a density function $f_{\varepsilon}$ (denoted whenever possible with $f$), we propose a novel density estimator designed to be adaptive to the left-censoring in the observations.
 For a positive   bandwidth sequence $\hat h_n$, we define the density estimator  of $f_{\varepsilon}(0)$ as 
\begin{equation}\label{eq:density}
\hat f(0) =
\hat h_n^{-1}\sum_{i=1}^n \frac
{ \ind(x_i\hat{\betab}>0)\ind(0\leq y_i-x_i\hat{\betab}\leq \hat h_n)}
{\sum_{i=1}^n\ind(x_i\hat{\betab}>0)} .
\end{equation}
 Of course, more elaborate smoothing schemes for the estimation of $f(0)$
could be devised for this problem, but there seems to be no a priori reason to
prefer an alternate estimator. 
\begin{remark}\label{remark4}
We will show that a choice of the bandwidth sequence satisfying $h_n^{-1} =  \smallocal ( \sqrt{n /(s \log p)})$ suffices. 
However, we also  propose an adaptive choice of the bandwidth sequence and consider $\hat h_n=\smallocal(1)$  such that  
 \[
 \hat h_n =  c \left\{s_{\hat \betab} \log p /n \right\}^{-1/3}   \mbox{median} \left\{ y_i > x_i \hat \betab + \sqrt{\log p/n}, \ x_i \hat \betab >0 \right\},
 \]
 for a constant $c>0$.   Here, $s_{\hat \betab}$ denotes the size of the estimated set of the non-zero elements of the initial estimator $\hat \betab$, i.e., $s_{\hat \betab} = \| \hat \betab\|_0 $.
 \end{remark}

     \subsection{Confidence Intervals}
     
Following the SEE principles, the one-step solution is defined as an estimator,
\begin{align}\label{beta_os}
\tilde{\betab} =   \hat \betab   -   \Omegab(\hat \betab) \Psi(\hat \betab )  /{2\hat f(0)}   .
\end{align}

For the presentation of our coverage rates of the confidence interval \eqref{eq:ci} and \eqref{eq:pi}, we start with  the   Bahadur representation.  Lemmas 1-6 (presented below) enable us to  
 establish the following decomposition for the introduced  one-step estimator $\tilde\betab$,
\begin{align}\label{eq:short_bahadur}
&\sqrt{n}\left(\tilde{\betab}-\betab^*\right)  = \frac{1}{2f(0)}\Sigmab^{-1}(\betab^*)\frac{1}{\sqrt{n}}\sum_{i=1}^n\psi_i(\betab^*) + \Delta.
\end{align}
where the vector $\Delta$ represents the residual component. 
We show that the residual vector's size is small uniformly and that the leading term is asymptotically normal.
The theoretical guarantees required from an initial estimator $\hat \betab$ is presented below.

 \vskip 7pt
 
{\bf  Condition (I)}: \label{condition_i}
{ \it An initial estimate $\hat \betab$ is such that for the left-censored model, irrespective of the density assumptions, the following three properties hold. There exists a sequence of positive numbers $r_n$ and $d_n$ such that $r_n,d_n \to 0$ when $n \to \infty$ and  $\| \hat \betab - \betab^*\|_2= \Ocal_P(r_n)$, $\|\hat \betab - \betab^* \|_1 = \Ocal_P(d_n)$ and $\| \hat \betab\|_0 = t=\Ocal_P(1)$.
}
 
 \vskip 7pt

$l_1$ penalized CLAD estimator studied in \cite{muller_vdgeer_2014}, under suitable conditions and a choice of the tuning parameter $\lambda > C \sqrt{\log p/n}$, satisfies the Condition (I) with $d_n = s_{\betab^*}\sqrt{ {\log p}/{n}}$. Results of \cite{muller_vdgeer_2014} can be extended to guarantee   that $r_n^2 =s_{\betab^*} \log p /n$ and $\| \hat \betab\|_0 = \Ocal_P( s_{\betab^*}$ \ $  \times\lambda_{\max}(X^\top X)/n)$, under the same conditions (proof is trivial extension of \cite{bickel_ritov_tsybakov_2009} and is hence not provided).  It is worth noting that the above condition does not assume model selection consistency of the initial estimator.
 
 \vskip 2pt
With the normality result of the proposed estimator $\tilde \betab$ (as shown in Theorem \ref{cor:ci}, Section \ref{sec:theory}), we are now ready to present the confidence intervals. Fix  $\alpha$ to be in the interval $(0,1)$ and let $z_{\alpha}$ denote the $(1-\alpha)$th standard normal percentile point. 
Let $\cbb$ be a fixed vector in $\RR^p$. Based on the results of Section \ref{sec:theory}, the standard studentized approach leads to a $(1-2\alpha) 100\%$ confidence interval for  $\cbb^\top \betab^* $ of the form 
\begin{equation}\label{eq:ci}
\mbox{ I}_{n}= \biggl( \cbb^\top \tilde\betab  -a_n,\cbb^\top  \tilde\betab +a_n \biggl),
\end{equation}
where $\tilde \betab $ is defined in   \eqref{beta_os} and 
\begin{equation}\label{eq:pi}
a_n =  z_{\alpha} \sqrt{  \cbb^\top  \Omegab (\hat\betab) \hat \Sigmab (\hat \betab) \Omegab(\hat\betab) \cbb} \Bigl/ \Bigl. 2 \sqrt{n}   \hat f(0)
\end{equation}
with $\Omegab (\hat\betab) $ defined in \eqref{eq:theta},  $\hat \Sigmab(\hat \betab)$ defined in \eqref{eq:sigma} 
and $\hat f(0)$ as defined in \eqref{eq:density}.
In the above, for $\cbb=\eb_j$, the above confidence interval provides a coordinate-wise confidence interval for each $\beta_j $, $1 \leq j \leq p$. Notice that the above confidence interval is robust in a sense that it is asymptotically valid irrespective of the distribution of the error term $\varepsilon$. 
     
     \section{Theoretical Results}\label{sec:theory}

We begin theoretical analysis with the following decomposition of \eqref{beta_os}
\begin{align}\label{betaos_decomp1}
\sqrt{n}&\left(\tilde{\betab}-\betab^*\right) \nonumber \\
&=  \frac{1}{2f(0)}\Sigmab^{-1}(\betab^*)\frac{1}{\sqrt{n}}\sum_{i=1}^n\psi_i(\betab^*) +\frac{1}{2f(0)}\left(\Omegab(\hat{\betab})-\Sigmab^{-1}(\betab^*)\right)\frac{1}{\sqrt{n}}\sum_{i=1}^n\psi_i(\betab^*)  \nonumber\\
& \qquad +\sqrt{n}\left(\hat{\betab}-\betab^*\right) +\frac{1}{2f(0)}\Omegab(\hat{\betab})\sqrt{n}\left(n^{-1}\sum_{i=1}^n\psi_i(\hat{\betab})-n^{-1}\sum_{i=1}^n\psi_i(\betab^*)\right).
\end{align} 
We can further decompose the last factor of the last term in \eqref{betaos_decomp1} as
$
n^{-1}\sum_{i=1}^n\psi_i(\hat{\betab})-n^{-1}\sum_{i=1}^n\psi_i(\betab^*) \nonumber
=  \mathbb G_n(\hat \betab) -  \mathbb G_n (\betab^*) + n^{-1}\sum_{i=1}^n\EE \left[ \psi_i(\hat{\betab})- \psi_i(\betab^*) \right] ,
$
where  
\begin{equation}\label{eq:Gn}
\mathbb G_n( \betab) = n^{-1}\sum_{i=1}^n \left[ \psi_i( {\betab}) -  \EE\psi_i(\betab) \right].
\end{equation}

To characterize the behavior of individual terms in the decomposition above, we develop a sequence of results presented below that rely on a number of conditions that we explain below. 
 We begin with a simple design assumption.
 
 \vskip 7pt 
 
{\bf  Condition (X)}: \label{condition_x}
{ \it There exists a bounded constant $K$, such that $ \max|X_{ij} | \leq K$, for all $i,j$. Moreover, $x_i$'s are i.i.d. random variables with  $\EE[X_{ij}^2]=1$, for all $i=1,\cdots,n$ and $j=1,\cdots,p$.  For some constant $K_0$, $x_i(\betab-\betab^*)$ and $\max\{0,x_i\betab^*\}-\max\{0,x_i\betab\}$ take value in interval $[-K_0,K_0]$, for all $i=1,\cdots,n$ and all $ \betab\in \mathcal{B}$ for a bounded set $\mathcal{B}$. 
}
 
 \vskip 7pt

The bound on $X_{ij}$ is  quite standard in high-dimensions \citep{vdgeer_buhlmann_ritov_dezeure_2014}.  However, in many cases, if $X$ follows an unbounded distribution, we can approximate its distribution with a truncated one.      Next, we rely on a set of very mild model error assumptions.

 \vskip 7pt

{\bf  Condition (E)}: \label{condition_e}
{ \it The error distribution $F$ has median 0, and is everywhere continuously differentiable, with density $f$, which is bounded above, $f_{\max}<\infty$, and  below, $f_{\min}>0$. Also, the density $f$ is bounded away from 0 at the origin, $f(0+)>0$.  Furthermore, $f(\cdot)$ is also Lipschitz continuous,
$
|f(t_1)-f(t_2)|\leq L_0\cdot|t_1-t_2|, \text{ for some } L_0>0.
$
Moreover, the function  $G_i(z,\betab,r) = \EE \left[\ind(|x_i\betab|\leq\|x_i\|\cdot z)\|x_i\|^r\right] \leq K_1 z$ and is $\smallocal(z)$ for $z$ near zero  and $r=0,1,2$ uniformly in $i$. 

}

 \vskip 7pt

The above assumption is the only condition we assign to the error distribution    \citep{powell_1984}. We require  the error density function to be with bounded first derivative.
 This excludes densities with unbounded first moment, but includes a   class of  distributions much larger than the Gaussian.
Moreover, this assumption implies that $x_i\betab$ are distributed much like the error $\varepsilon_i$, for $\betab$ close to $\betab^*$ and $x_i\betab$ close to the censoring level $0$.

\begin{lemma}
\label{lemma0}
  Suppose that the Conditions {\bf{(X)}}, {\bf{(E)}} hold.  
 Consider the class of parameter spaces modeling sparse vectors with at most $t$ non-zero elements,
$
\Ccal(r, t)  = \{ 
\wb \in \RR^{p} \mid \norm{\wb}_2 \leq r_n,
 \sum_{j =1}^ p\ind\{w_j \neq 0\} \leq t \}
$
where $r_n$ is a sequence of positive numbers. Then, there exists a fixed
  constant $C$ (independent of $p$ and  $n$), such that 
  the process  $\mu_i(\deltab) = \ind\{x_i\deltab \geq x_i \betab^* \}$ satisfies with probability $1-\delta$.
\[
\sup_{\deltab \in \Ccal(r_n, t)} n^{-1} \left| \sum_{i=1}^n  \mu_i(\deltab) -  \EE [\mu_i(\deltab)] \right| \leq C \left( 
\sqrt{\frac{   r_n   t  \sqrt{t} 
\log(n p /\delta)}{n}} 
\bigvee
\frac{  t \log(2n p /\delta)}{n} \right).
\]
\end{lemma}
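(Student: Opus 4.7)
The plan is to combine a sparsity-based covering of $\Ccal(r_n,t)$ with Bernstein's inequality for bounded random variables, and to control the discretization error by exploiting the mass-near-zero estimate supplied by condition (E). Throughout I write $Z_n(\deltab):=n^{-1}\sum_{i=1}^n(\mu_i(\deltab)-\EE\mu_i(\deltab))$, so that the statement is a uniform bound on $|Z_n|$ over $\Ccal(r_n,t)$.

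First I would discretize. Partition $\Ccal(r_n,t)$ by support: $\Ccal(r_n,t)=\bigcup_{S\subseteq\{1,\dots,p\},\,|S|\leq t}B_S(r_n)$, where $B_S(r_n)$ is the Euclidean ball of radius $r_n$ in $\RR^S$. There are $\binom{p}{t}\leq(ep/t)^t$ such supports, and a standard volumetric argument yields a $\nu$-net $\mathcal N_S$ of $B_S(r_n)$ of cardinality at most $(3r_n/\nu)^t$. The union $\mathcal N:=\bigcup_S\mathcal N_S$ therefore satisfies $\log|\mathcal N|\leq t\log(3epr_n/(t\nu))$.

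Second, I would bound $|Z_n(\deltab_0)|$ pointwise for each $\deltab_0\in\mathcal N$. Since $\mu_i(\deltab_0)\in\{0,1\}$, each centered summand is bounded by $1$ and has variance at most $\EE\mu_i(\deltab_0)$. Condition (E), combined with the sparsity of $\deltab_0$ (which gives $\|\deltab_0\|_1\leq\sqrt{t}\,r_n$), lets one bound $\EE\mu_i(\deltab_0)\leq Cr_n$ by applying the $G_i(\cdot,\cdot,0)\leq K_1 z$ estimate. Bernstein's inequality together with a union bound over $\mathcal N$ then yields, with probability at least $1-\delta/2$,
\[
\sup_{\deltab_0\in\mathcal N}|Z_n(\deltab_0)|\leq C\sqrt{\frac{r_n\,t\,\log(3epr_n/(t\nu\delta))}{n}}+C\frac{t\log(3epr_n/(t\nu\delta))}{n}.
\]

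Third, I would control the discretization error. For $\deltab\in B_S(r_n)$ and its nearest net point $\deltab_0\in\mathcal N_S$, we have $\|\deltab-\deltab_0\|_2\leq\nu$, and using $\|\cdot\|_1\leq\sqrt{t}\|\cdot\|_2$ together with $\|x_i\|_\infty\leq K$ from condition (X) gives $|x_i(\deltab-\deltab_0)|\leq K\sqrt{t}\,\nu$. Hence $|\mu_i(\deltab)-\mu_i(\deltab_0)|\leq\ind\{|x_i\deltab_0-x_i\betab^*|\leq K\sqrt{t}\,\nu\}$, an indicator whose expectation is bounded via the $G_i$ estimate of condition (E) by $C\sqrt{t}\,\nu$. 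Concentration of the corresponding empirical mean (another application of Bernstein, on a class of the same complexity as $\mathcal N$) then controls the random discretization error uniformly.

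Finally, I would balance $\nu$ (for instance $\nu\asymp r_n/n$) so that the discretization error is absorbed by the leading concentration bound, producing the claimed rate $\sqrt{r_n t\sqrt{t}\log(np/\delta)/n}\vee t\log(2np/\delta)/n$. The main obstacle is the sharp tracking of the sparsity factor: the $t^{3/2}$ appearing inside the square root arises because the variance in the Bernstein step is of order $r_n$ (not $r_n^2$), while the $\ell_1\leq\sqrt{t}\,\ell_2$ inflation in the discretization step costs an additional $\sqrt{t}$; reconciling these without overcounting requires the refined small-$z$ behavior $G_i(z,\cdot,0)=\smallocal(z)$ assumed in condition (E).
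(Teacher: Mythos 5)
Your proposal follows essentially the same route as the paper's proof: a support-restricted $\epsilon$-net of $\Ccal(r_n,t)$ with $\log$-cardinality of order $t\log(pn)$, Bernstein's inequality at the net points with the variance controlled through the small-mass estimate of Condition~{\bf(E)}, and a discretization error handled by monotonicity of the indicators plus the bounded-density bound, with the net radius tuned to $\asymp r_n/n$. The only substantive caveat is one you share with the paper: the bound $\EE\mu_i(\deltab_0)\lesssim r_n$ (respectively the paper's $w_i(\tilde\deltab_k)\leq |x_i\tilde\deltab_k|K_1$ via the mean value theorem) really controls the increment $\PP(x_i\betab^*\leq x_i\deltab_0)-\PP(x_i\betab^*\leq 0)$ rather than the probability itself, so both arguments implicitly treat the process as anchored at $\deltab=0$, which is how the lemma is actually used downstream.
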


The preceding Lemma immediately implies  strong approximation of the empirical process with its expected process,  as long as $r_n$, the estimation error, and $t$, the size of the estimated set  of the initial estimator, are sufficiently small.  The power of the Lemma \ref{lemma0} is that it holds uniformly for a class of parameter vectors enabling a wide range of choices for the initial estimator.   

Apart from the condition on the design matrix $X$ and the error distribution, we need conditions on the censoring level of the model \eqref{model} for further analysis.

 \vskip 7pt

{\bf  Condition (C)}: \label{condition_c}
{ \it There exists some constant $C_2>0$, such that for all $\betab$ satisfying $\|(\betab-\betab^*)_{S_{\betab^*}^C}\|_1 \leq 3\|(\betab-\betab^*)_{S_{\betab^*}}\|_1$, 
$ \left\|\max\{0,X\betab^*\}-\max\{0,X\betab\}\right\|_2^2 \geq C_2\|X(\betab-\betab^*)\|_2^2,$  where the $\max$ operation is entry-wise  maximization. 
}

\vskip 7pt
 
The censoring level $c$ has a direct influence on the constant $C_2$.  In general, higher values for $c_i$ increase the number of censored data. The bounds for the coverage probability  (see Theorem \ref{cor:fixed:ci}) do not depend on the  censoring level $c$. The fact that the censoring level does not directly appear in the results should be understood in the sense that the percentage
of the censored data is important, not the censoring level. 

\vskip 7pt

{\bf  Condition (CC)}: \label{condition_cc}
{\it For  some compatibility constant $\phi_0>0$   and all $\betab$ satisfying $\|(\betab-\betab^*)_{S_{\betab^*}^C}\|_1$$ \leq$
 $ 3\|(\betab-\betab^*)_{S_{\betab^*}}\|_1$, the following holds 
$n \phi_0^2 \|(\betab-\betab^*)_{S_{\betab^*}}\|_1^2 \leq (\betab-\betab^*)^\top \EE[X^\top  X](\betab-\betab^*)  s_{\betab^*} .$ Let $v_n$ be the smallest eigenvalue of $\Sigmab(\betab^*) $. Then $v_{n_1}>v_{n_2}$, for  
$n_1>n_2$. Additionally, $v_n$ is also strictly positive, with $1/v_n=\mathcal{O}(1)$
and assume $\underset{j}{\max}\ \Sigmab_{jj}(\betab^*)=\mathcal{O}(1)$.
}

\vskip 7pt

Note that this compatibility factor does not impose any restrictions on the censoring of the model, i.e., it is the same as the one introduced for linear models \citep{bickel_ritov_tsybakov_2009}.  
 Observe that  this condition does not impose distribution of $W$ to be Gaussian or continuous. However,  it  requires that $\Sigmab(\betab^*)$, the population covariance matrix, is at least invertible, a condition unavoidable even in linear models.

Next, we present a linearization result useful for further decomposition of the Bahadur representation \eqref{betaos_decomp1}.

\begin{lemma}\label{lemma1}
  Suppose that the conditions {\bf{(X)}, {\bf{(E)}}} hold. For all $\betab$, such that $\|\betab-\betab^*\|_1<\xi$, the following representation holds
\begin{align*}
n^{-1}\sum_{i=1}^n\EE\psi_i(\betab) = 2f(0) \Sigmab( {\betab}^*)(\betab^*-\betab) + \mathcal{O}(\|\betab-\betab^*\|_1)(\betab^*-\betab).
\end{align*} where $ \Sigmab(\betab^*)$ is defined in \eqref{eq:ipsilon}.
\end{lemma}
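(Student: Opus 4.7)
The plan is to evaluate $\EE[\psi_i(\betab)]$ in closed form by first conditioning on $x_i$ and then Taylor expanding the error CDF about zero; the statement then follows by averaging. Since $w_i(\betab) = x_i \ind\{x_i\betab>0\}$ vanishes off $\{x_i\betab>0\}$, so does $\psi_i(\betab)$, so only the event $\{x_i\betab>0\}$ contributes. On that event the censoring floor $0$ lies strictly below $x_i\betab$, hence $y_i < x_i\betab$ iff $x_i\betab^* + \varepsilon_i < x_i\betab$, and likewise for $>$. Using independence of $\varepsilon_i$ and $x_i$ and writing $v = \betab - \betab^*$, this yields
\[
\EE[\psi_i(\betab)\mid x_i] = \bigl(1-2F(x_i v)\bigr)\, x_i^\top\, \ind\{x_i\betab>0\}.
\]

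The next step is to exploit $F(0)=1/2$ from Condition (E) together with the Lipschitz continuity of $f$ to expand
\[
1 - 2F(x_i v) = -2 f(0)\, x_i v - 2\!\int_{0}^{x_i v} \bigl(f(s)-f(0)\bigr)\, ds,
\]
where the remainder integral is bounded in absolute value by $L_0 (x_i v)^2 \leq L_0 K \|v\|_1\, |x_i v|$ by the bound $|x_i v|\leq K\|v\|_1$ from Condition (X). Substituting back and factoring $v$ out of the remainder produces
\[
\EE[\psi_i(\betab)] = 2 f(0)\, \Sigmab_i(\betab)(\betab^*-\betab) + M_i\, v,
\]
with $\Sigmab_i(\betab):=\EE[x_i^\top x_i \ind\{x_i\betab>0\}]$ and $M_i$ a $p\times p$ matrix whose entries satisfy $|(M_i)_{jk}| \leq L_0 K^3 \|v\|_1$. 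Consequently $M_i v$ has the form $\mathcal{O}(\|v\|_1)(\betab^*-\betab)$ demanded by the statement.

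The main obstacle is replacing the data-dependent $\Sigmab_i(\betab)$ by the target $\Sigmab_i(\betab^*)$. The entrywise difference equals $\EE[X_{ij}X_{ik}(\ind\{x_i\betab>0\}-\ind\{x_i\betab^*>0\})]$, and the two indicators disagree only on the transition region $\{|x_i\betab^*|\leq |x_i v|\} \subseteq \{|x_i\betab^*|\leq K\|v\|_1\}$. Condition (E), through the growth bound $G_i(z,\betab^*,r)=\mathcal{O}(z)$ for $r=0,1,2$, says exactly that this region carries probability (and its $\|x_i\|^r$-weighted analogues) of order $\|v\|_1$; combined with $|X_{ij}|\leq K$ this upgrades to an $\mathcal{O}(\|v\|_1)$ bound on every entry of $\Sigmab_i(\betab)-\Sigmab_i(\betab^*)$. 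Averaging over $i$ collapses $n^{-1}\sum_i \Sigmab_i(\betab^*)$ to $\Sigmab(\betab^*)$ (all summands are equidistributed under Condition (X)), and this last discrepancy combines with $M_i v$ into the promised $\mathcal{O}(\|\betab-\betab^*\|_1)(\betab^*-\betab)$ term, finishing the linearization.
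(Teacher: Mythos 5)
Your argument is correct and follows essentially the same route as the paper's proof: both expand $1-2F(x_i(\betab-\betab^*))$ around $F(0)=1/2$ (you via the exact integral remainder, the paper via the mean value theorem plus the Lipschitz bound on $f$), and both control the swap of $\ind\{x_i\betab>0\}$ for $\ind\{x_i\betab^*>0\}$ through the transition-region bound $\ind\{|x_i\betab^*|\leq|x_i(\betab-\betab^*)|\}$ together with Condition {\bf(E)}'s $G_i$ growth condition and the boundedness of $X$ from Condition {\bf(X)}. No substantive differences.
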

 
 Once the properties of the initial estimator are provided, such is Condition {\bf{(I)}},
 Lemma \ref{lemma1} can be used to  linearize   the population level difference of the  functions   $\psi_i(\hat{\betab}) $ and $\psi_i( {\betab}^*) $. Together with Lemma \ref{lemma0},  Lemma \ref{lemma1} allows us to overpass the original highly discontinuous  and non-convex loss function.
Utilizing  Lemma \ref{lemma1}, Conditions {\bf{(I)}}-{\bf{(CC)}} and representation \eqref{betaos_decomp1},  the  Bahadur representation of $\tilde \betab$ becomes 
\begin{align}\label{delta_1}
&\sqrt{n}\left(\tilde{\betab}-\betab^*\right)  = \frac{1}{2f(0)}\Sigmab^{-1}(\betab^*)\frac{1}{\sqrt{n}}\sum_{i=1}^n\psi_i(\betab^*) 
+ I_1 + I_2 + I_3 + I_4
\end{align}
where
\begin{align*}
 I_1 =\sqrt{n}\left(I-\Omegab(\hat{\betab})   \Sigmab(\betab^*)  \right)\left(\hat{\betab}-\betab^*\right),  \ \ 
 I_2 ={ -} \frac{1}{2f(0)}\Omegab(\hat{\betab})\sqrt{n}\cdot \mathcal{O}_P(\|\hat{\betab}-\betab^*\|_1)(\hat{\betab}-\betab^*)\ \  \ \  \ \   \\
 I_3=\frac{1}{2f(0)}\left(\Omegab(\hat{\betab})-\Sigmab^{-1}(\betab^*)\right)\frac{1}{\sqrt{n}}\sum_{i=1}^n\psi_i(\betab^*), \ 
I_4=\frac{1}{2f(0)}\Omegab(\hat{\betab})\sqrt{n}
\left[  \mathbb G_n(\hat \betab) -  \mathbb G_n (\betab^*)\right].\ \ \ \ \ \ 
\end{align*}

 We show that the last four terms of the right hand side above, each converges to $0$ asymptotically at a faster rate than the first term on the right hand side of \eqref{delta_1}.  In order to establish such result, we need to control the scale estimator. We begin by introducing a basic condition.

\vskip 5pt 

{\bf  Condition ($\boldsymbol \Gamma$)}:\label{condition_pc}
{\it  Parameters $\gammab_{(j)}^*(\betab^*)$ for all $j=1,\dots, p$ are bounded by $K_\gamma$,  
and such that 
$\|  \gammab_{(j)}^*(\betab^*)\|_0  \leq s_j$, and  $s_j \leq s$, for all $j$.  Moreover,  $ \zetab_j^*$ is sub-exponential random vector, and $K \| \zetab_{1,i}^* \|_{\psi_j}  := \tilde K< \infty$.
}
 \vskip 5pt 

The preceding  condition can be traced back to   \cite{vdgeer_buhlmann_ritov_dezeure_2014}. It restricts   the conditional  mean  of the column $W_j(\betab^*)$  to be  a function of at most $s_j$ other columns of the design matrix $W(\betab^*)$.  However,   the condition does not impose a particular
 distributional assumptions.

 The following two lemmas help  to establish $l_1$ column bound  of the corresponding precision matrix estimator. The first one provides properties of the estimator $\hat \gammab_{(j)}(\hat \betab)$ as defined  in \eqref{nodewise_lasso}. Although this estimator is obtained via Lasso-type procedure, significant challenges arise in its analysis due to dependencies  in the plug-in loss function.   The design matrix of this problem does not have independent and identically distributed rows.
 We overcome these challenges by approximating the solution to the oracle one  and without imposing any new conditioning of the design matrix.  
 
\begin{lemma}\label{cor:2}
Let  $\lambda_j = C \left( (\log p / n)^{1/2} \bigvee \left(r_n^{1/2} \bigvee t^{1/4} (\log p / n)^{1/2} \right) t^{3/4} s_j(\log p / n)^{1/2}\right)$ for a constant $C >1$ and let 
Conditions {\bf{(I)}}, {\bf(X)}, {\bf(E)}, {\bf(C)}, {\bf(CC)} and {\bf($\Gammab$)} hold.  
Then,
\begin{align}
&\left\|\hat\gammab_{(j)}(\hat\betab)-\gammab_{(j)}^*(\betab^*)\right\|_1 = \ocal_P \left(  \frac{ K_\gamma }{\phi_0^2 C_2 }  s_j
\lambda_j \right).\nonumber
\end{align}
\end{lemma}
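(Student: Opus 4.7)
The plan is to follow the standard Lasso template for nodewise regression, but carefully accommodate two non-standard features of \eqref{nodewise_lasso}: the design $W_{-j}(\hat\betab)$ is obtained by plugging in an estimator $\hat\betab$ rather than the truth $\betab^*$, and the map $\betab \mapsto W(\betab)$ is non-smooth through the indicator $\ind(X\betab>0)$, so perturbations in $\betab$ act by discrete column-flips rather than smooth deviations. Write $\gammab^* := \gammab_{(j)}^*(\betab^*)$, $\hat\gammab := \hat\gammab_{(j)}(\hat\betab)$, $\deltab := \hat\gammab - \gammab^*$, and the plug-in residual $\hat r := W_j(\hat\betab) - W_{-j}(\hat\betab)\gammab^*$. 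Optimality in \eqref{nodewise_lasso} gives the usual basic inequality $n^{-1}\|W_{-j}(\hat\betab)\deltab\|_2^2 + 2\lambda_j\|\hat\gammab\|_1 \leq 2\big\langle n^{-1} W_{-j}(\hat\betab)^\top \hat r,\, \deltab\big\rangle + 2\lambda_j\|\gammab^*\|_1$, and I split $\hat r = \zetab_j^* + \big[W_j(\hat\betab) - W_j(\betab^*)\big] - \big[W_{-j}(\hat\betab) - W_{-j}(\betab^*)\big]\gammab^*$, separating the oracle residual $\zetab_j^*$ of \eqref{eqn:zeta*} from the plug-in perturbation.

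\textbf{$\ell_\infty$ bound on the score.} The oracle piece $n^{-1}W_{-j}(\betab^*)^\top \zetab_j^*$ is bounded by Bernstein using the sub-exponential hypothesis of Condition ($\Gammab$) together with $|X_{ij}|\leq K$ (Condition (X)), producing $\Ocal_P(\sqrt{\log p/n})$. For the plug-in pieces, $w_i(\hat\betab) - w_i(\betab^*) = x_i\big(\ind(x_i\hat\betab>0) - \ind(x_i\betab^*>0)\big)$, so all remaining terms reduce to empirical averages of indicator-flips weighted by bounded covariates and by entries of $\gammab^*$. Applying Lemma \ref{lemma0} with the rates $r_n$ and $t$ supplied by Condition (I), and using $\|\gammab^*\|_\infty \leq K_\gamma$ from Condition ($\Gammab$), yields a uniform bound of the order $K_\gamma\, t^{3/4}(r_n^{1/2} \vee t^{1/4}(\log p/n)^{1/2})\sqrt{\log p/n}$. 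The prescribed $\lambda_j$ is precisely twice this bound, ensuring $\lambda_j \geq 2\|n^{-1} W_{-j}(\hat\betab)^\top \hat r\|_\infty$ with probability tending to one.

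\textbf{Compatibility on the plug-in Gram matrix.} The key step is to show that $\hat G := n^{-1}W_{-j}(\hat\betab)^\top W_{-j}(\hat\betab)$ satisfies a compatibility condition with constant of order $\phi_0^2 C_2$ on the cone $\mathcal{K}_j := \{\deltab : \|\deltab_{S_j^c}\|_1 \leq 3\|\deltab_{S_j}\|_1\}$. I proceed in three layers: (i) Condition (CC) gives compatibility $\phi_0^2$ on $n^{-1}\EE[X^\top X]$ along $\mathcal{K}_j$; (ii) since $W(\betab^*)^\top W(\betab^*) = X^\top A(\betab^*) X$, the CLAD-type restricted strong convexity of Condition (C) --- which in effect lower-bounds the contribution of rows with $x_i\betab^*>0$ --- incurs at most a factor $C_2$ loss, transferring compatibility to $n^{-1}\EE[W_{-j}(\betab^*)^\top W_{-j}(\betab^*)]$; (iii) sub-Gaussian concentration on the bounded entries of $X^\top X$, combined with Lemma \ref{lemma0} to absorb the indicator-flip perturbation in $\hat G - n^{-1}W_{-j}(\betab^*)^\top W_{-j}(\betab^*)$, upgrades this to the plug-in empirical Gram with a multiplicative error of order $\ocal_P(1)$.

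\textbf{Closing the argument.} The high-probability event from Step 2 forces $\deltab \in \mathcal{K}_j$ by the standard cone argument, and the compatibility bound from Step 3 then yields $\|\deltab\|_1 \lesssim s_j \lambda_j /(\phi_0^2 C_2)$; carrying the $K_\gamma$ factor from the $\ell_\infty$ score bound through gives the advertised $\|\hat\gammab - \gammab^*\|_1 = \ocal_P(K_\gamma s_j \lambda_j / (\phi_0^2 C_2))$. The main difficulty is Step 3: standard nodewise-Lasso theory assumes an i.i.d.\ design and does not handle the simultaneous corruption by the non-smooth map $\betab \mapsto W(\betab)$ and the random plug-in $\hat\betab$, so compatibility must be lifted all the way from the population $\EE[X^\top X]$ to the empirical plug-in $\hat G$, and Lemma \ref{lemma0}, which gives a uniform indicator-flip concentration over the admissible class of initial estimators, is precisely the tool that makes this lift possible without degrading the compatibility constant.
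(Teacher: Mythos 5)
Your proposal is correct and follows essentially the same route as the paper's proof: a cone-membership step calibrating $\lambda_j$ against the $\ell_\infty$ norm of the score at $\gammab_{(j)}^*(\betab^*)$ (Bernstein for the sub-exponential oracle piece, Lemma \ref{lemma0} for the indicator-flip plug-in pieces), followed by the basic inequality and compatibility inherited from Conditions {\bf(CC)} and {\bf(C)} with the $\phi_0^2 C_2$ constant. The only difference is bookkeeping --- you absorb the plug-in perturbation into the empirical Gram matrix before invoking compatibility, whereas the paper keeps $W_{-j}(\betab^*)$ on the left and moves the perturbation cross-terms to the right-hand side to be dominated by $\lambda_j$; also note your stated score bound drops the $s_j$ factor (coming from $\|\gammab_{(j)}^*\|_1 \leq K_\gamma s_j$ in the terms involving $\gammab_{(j)}^*$) that is present in the prescribed $\lambda_j$, though this does not affect the validity of the calibration.
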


\begin{remark}
 This Lemma implies that the  precision matrix estimator   has distinct limiting behaviors
in terms of the magnitude of  the censoring level.  In particular, Lemma \ref{cor:2} implies that $\left\|\hat\gammab_{(j)}(\hat\betab)-\gammab_{(j)}^*(\betab^*)\right\|_1$  inherits the rates available for fully observed linear models whenever $C_2 $ is bounded away from zero. Additionally, if all the data is censored, i.e., whenever $C_2$ converges to zero at a rate faster than $\lambda_j$, the estimation error will explode.  These results agree with the asymptotic results on consistency in left-censored and low-dimensional models; however, they provide additional details through the exact rates of censoring that is allowed. For example, if the initial estimator is such that $r_n $ is of the order of $s_{\betab^*} \sqrt{\log p/n}$, then the asymptotic result above matches those of linear models (see Theorem \ref{thm:clad_temp}).
\end{remark}

\begin{remark}
The choice of the tuning parameter $\lambda_j$ depends on the $l_2$ convergence rate of the initial estimator $r_n$, and the size of its estimated non-zero set. However, we observe that whenever $r_n$ is such that $r_n \leq t^{-3/8} s_j^{-1/2}$ and the sparsity of the initial estimator is such that $t s_j \sqrt{\log p/n} <1$, then the optimal choice of the tuning parameter is of the order of $\sqrt{\log p/n}$. In particular, any initial estimator that satisfies $r_n < n^{-1/4}$ is sufficient for optimal rates at estimator in a model where $t \leq n^{1/8}$ and $s_j \leq n^{1/8}$.
 \end{remark}

The next result gives a  bound on the variance of our $\hat \gammab_{(j)}(\hat \betab)$ estimator.

\begin{lemma}\label{lem:temp1}
Let  $\lambda_j = C \left( (\log p / n)^{1/2} \bigvee \left(r_n^{1/2} \bigvee t^{1/4} (\log p / n)^{1/2} \right) t^{3/4} s_j(\log p / n)^{1/2}\right)$ for a constant $C >1$ and let 
Conditions {\bf{(I)}}, {\bf(X)}, {\bf(E)}, {\bf(C)}, {\bf(CC)} and {\bf($\Gammab$)} hold.
Then, for  $j=1,\dots, p$ and $\zetab^*$ and $\hat \zetab$ defined in \eqref{eqn:zeta*}  
\begin{align*}
\left| \hat \tau_j ^2(\lambda_j)-  \tau_j^2  \right|  = \ocal_P\left( K^2 K_\gamma s_j^2 \lambda_j \right).
\end{align*}
\end{lemma}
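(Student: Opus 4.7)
The plan is to decompose the target difference into three manageable pieces, and then show that each is dominated by the stated rate. Specifically, write
\begin{align*}
\hat\tau_j^2(\lambda_j) - \tau_j^2 \;=&\; \underbrace{n^{-1}\bigl(\hat\zetab_j^\top\hat\zetab_j - \zetab_j^{*\top}\zetab_j^*\bigr)}_{=:\,T_1} \;+\; \underbrace{n^{-1}\zetab_j^{*\top}\zetab_j^* - n^{-1}\EE\|\zetab_j^*\|_2^2}_{=:\,T_2} \\
&+\; \underbrace{\lambda_j\|\hat\gammab_{(j)}(\hat\betab)\|_1}_{=:\,T_3}.
\end{align*}
The term $T_2$ is a classical concentration statement: by Condition $(\boldsymbol{\Gamma})$ the entries of $\zetab_j^*$ are sub-exponential with uniformly bounded $\psi_1$-Orlicz norm and independent across $i$, so Bernstein's inequality together with a union bound in $j$ yields $|T_2|=\Ocal_P(\sqrt{\log p/n})$, which is $\ocal_P(\lambda_j)$ by the prescribed lower bound on $\lambda_j$. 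For $T_3$, combine the triangle inequality with Lemma \ref{cor:2}: $\|\hat\gammab_{(j)}(\hat\betab)\|_1 \leq \|\gammab_{(j)}^*(\betab^*)\|_1 + \|\hat\gammab_{(j)}(\hat\betab)-\gammab_{(j)}^*(\betab^*)\|_1 \leq K_\gamma s_j + \ocal_P(K_\gamma s_j\lambda_j/(\phi_0^2 C_2))$, which gives $T_3$ at the claimed order.

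The bulk of the work lies in $T_1$. I would decompose $\hat\zetab_j - \zetab_j^* = \Delta_1 - \Delta_2 - \Delta_3$ with
\begin{align*}
\Delta_1 &= W_j(\hat\betab) - W_j(\betab^*), \\
\Delta_2 &= W_{-j}(\hat\betab)\bigl[\hat\gammab_{(j)}(\hat\betab) - \gammab_{(j)}^*(\betab^*)\bigr], \\
\Delta_3 &= \bigl[W_{-j}(\hat\betab) - W_{-j}(\betab^*)\bigr]\gammab_{(j)}^*(\betab^*),
\end{align*}
so that $T_1 = 2n^{-1}(\Delta_1-\Delta_2-\Delta_3)^\top\zetab_j^* + n^{-1}\|\Delta_1-\Delta_2-\Delta_3\|_2^2$. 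The cross term is handled by Cauchy--Schwarz, and everything reduces to bounding $n^{-1}\|\Delta_\ell\|_2^2$ for $\ell=1,2,3$. For $\Delta_2$, Condition (X) bounds the entries of $W_{-j}(\hat\betab)$ by $K$, whence $n^{-1}\|\Delta_2\|_2^2 \leq K^2\|\hat\gammab_{(j)}(\hat\betab)-\gammab_{(j)}^*(\betab^*)\|_1^2$, which is $\ocal_P(K^2K_\gamma^2 s_j^2\lambda_j^2/(\phi_0^4 C_2^2))$ by Lemma \ref{cor:2}.

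The main obstacle is controlling $\Delta_1$ and $\Delta_3$, both supported on the sign-flip set $\mathcal{F}:=\{i:\ind(x_i\hat\betab>0)\neq\ind(x_i\betab^*>0)\}$, with entries bounded by $K$ and $K\|\gammab_{(j)}^*\|_1 \leq KK_\gamma s_j$ respectively; thus $n^{-1}\|\Delta_1\|_2^2 \leq K^2 |\mathcal{F}|/n$ and $n^{-1}\|\Delta_3\|_2^2 \leq K^2K_\gamma^2 s_j^2 |\mathcal{F}|/n$. To control $|\mathcal{F}|/n$ uniformly in the \emph{random} $\hat\betab$, observe that on $\mathcal{F}$ one has $|x_i\betab^*|\leq|x_i(\hat\betab-\betab^*)|\leq\|x_i\|\cdot\|\hat\betab-\betab^*\|_2$, so Condition (E) gives $\EE|\mathcal{F}|/n \leq G_i(\|\hat\betab-\betab^*\|_2,\betab^*,0) \leq K_1\|\hat\betab-\betab^*\|_2 = \Ocal_P(K_1 r_n)$. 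The corresponding uniform deviation is then absorbed by invoking Lemma \ref{lemma0}, applied on the sparse ball $\Ccal(r_n,2t)$ to which $\hat\betab-\betab^*$ belongs with high probability by Condition (I). Assembling the three bounds and verifying that $K^2 K_\gamma s_j^2 \lambda_j$ dominates the contributions of $T_1$, $T_2$, and $T_3$ under the stated lower bound on $\lambda_j$ completes the proof. The hardest step is precisely the uniform control of $|\mathcal{F}|$: the circular dependence between $\hat\betab$ and the indicators $\ind(x_i\hat\betab>0)$ must be broken, and Lemma \ref{lemma0} is the instrument specifically engineered for this job.
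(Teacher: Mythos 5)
Your overall architecture is the same as the paper's: split $\hat\tau_j^2(\lambda_j)-\tau_j^2$ into the quadratic-form difference, the concentration of $n^{-1}\zetab_j^{*\top}\zetab_j^*$ around its mean, and the penalty term; control the first via the sign-flip set $\mathcal{F}=\{i:\ind(x_i\hat\betab>0)\neq\ind(x_i\betab^*>0)\}$ using Condition \textbf{(E)} plus Lemma \ref{lemma0}, and via Lemma \ref{cor:2} for the $\hat\gammab_{(j)}$ error. Your treatments of $T_2$, $T_3$, and $\Delta_2$ are fine and match the paper in substance.

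There is, however, one step that fails: bounding the cross term $2n^{-1}\zetab_j^{*\top}(\hat\zetab_j-\zetab_j^*)$ by Cauchy--Schwarz. That converts the contribution of $\Delta_1$ and $\Delta_3$ into $\bigl(n^{-1}\|\Delta_\ell\|_2^2\bigr)^{1/2}\lesssim K K_\gamma s_j\,(|\mathcal{F}|/n)^{1/2}$, i.e.\ it puts a square root on the sign-flip fraction. Since $|\mathcal{F}|/n$ is of order $r_n$ (up to the Lemma \ref{lemma0} deviation), the cross term becomes $\Ocal_P\bigl(K K_\gamma s_j r_n^{1/2}\bigr)$; with, say, the CLAD rate $r_n=(s_{\betab^*}\log p/n)^{1/2}$ and $\lambda_j\asymp(\log p/n)^{1/2}$ this is of order $(\log p/n)^{1/4}$, which is polynomially larger than the target $K^2K_\gamma s_j^2\lambda_j\asymp s_j^2(\log p/n)^{1/2}$ and cannot be absorbed. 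The paper avoids this by never squaring: it factors $\bigl|n^{-1}\hat\zetab_j^\top\hat\zetab_j-n^{-1}\zetab_j^{*\top}\zetab_j^*\bigr|\le n^{-1}\|\hat\zetab_j+\zetab_j^*\|_\infty\,\|\hat\zetab_j-\zetab_j^*\|_1$, so the sign-flip set enters only linearly through $n^{-1}\|\hat\zetab_j-\zetab_j^*\|_1\lesssim K(1\vee K_\gamma s_j)\,|\mathcal{F}|/n+\cdots$, while the sup-norm factor costs only $K(1\vee K_\gamma)s_j$. Your decomposition is salvageable with the same idea: replace Cauchy--Schwarz on the cross term by the $\ell_\infty$--$\ell_1$ pairing, using $\|\zetab_j^*\|_\infty\le K(1+K_\gamma s_j)$ from Conditions \textbf{(X)} and \textbf{($\Gammab$)}, which yields $|2n^{-1}\zetab_j^{*\top}\Delta_\ell|\lesssim K^2K_\gamma s_j\cdot|\mathcal{F}|/n$ for $\ell=1,3$ and restores the claimed rate. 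As written, though, the Cauchy--Schwarz step is a genuine gap, not a cosmetic one.
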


 Next is the main result on the properties of the proposed matrix estimator $\Omegab(\hat\betab)$.

\begin{lemma}\label{lemma2}
Let the setup of Lemma \ref{lem:temp1} hold.
Let $\Omegab(\hat \betab)$ be the estimator as in \eqref{eq:theta}.
Then, 
for $\hat \tau_j^{2}$ as in \eqref{tau_hat}, we have 
$
\hat \tau_j^{-2} = \mathcal{O}_P(1).
$
Moreover, 
\begin{align*}
&\left\|\Omegab(\hat{\betab})_j-\Sigmab^{-1}(\betab^*)_j\right\|_1   = \ocal_P\left( K^2 K_\gamma^2 s_j^3 \lambda_j \right).
\end{align*}
\end{lemma}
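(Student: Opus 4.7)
The plan is to derive both statements from Lemmas \ref{cor:2} and \ref{lem:temp1} together with the lower bound $\tau_j^{-2} = \Ocal(1)$, and then to combine them via a triangle inequality on the row definition \eqref{eq:theta}. The key observation is that $\tau_j^2$ is the residual variance of a nodewise population regression against $\Sigmab(\betab^*)$, so Condition {\bf(CC)} (which gives a positive lower bound $v_n$ on the smallest eigenvalue of $\Sigmab(\betab^*)$ with $1/v_n = \Ocal(1)$) implies $\tau_j^{2} \geq v_n$, hence $\tau_j^{-2} = \Ocal(1)$ uniformly in $j$.

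For the first assertion $\hat{\tau}_j^{-2}=\Ocal_P(1)$, I would write $\hat{\tau}_j^{2} = \tau_j^{2}+(\hat{\tau}_j^{2}-\tau_j^{2})$. By Lemma \ref{lem:temp1} and the side condition that $\lambda_j\to 0$ while $s_j,t$ grow slowly enough for the product $K^2K_\gamma s_j^2\lambda_j$ to vanish, the difference is $\ocal_P(1)$. Combined with $\tau_j^2\geq v_n>0$, this gives $\hat\tau_j^{-2} = 1/(\tau_j^2 + \ocal_P(1)) = \Ocal_P(1)$ and, more quantitatively,
\[
\left|\hat\tau_j^{-2}-\tau_j^{-2}\right| = \frac{\left|\hat\tau_j^{2}-\tau_j^{2}\right|}{\hat\tau_j^{2}\tau_j^{2}} = \ocal_P\!\left(K^2 K_\gamma s_j^{2}\lambda_j\right).
\]

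For the row bound, by the definition \eqref{eq:theta} the $j$th row difference consists of $\hat{\tau}_j^{-2}-\tau_j^{-2}$ in the diagonal coordinate and $-\hat\tau_j^{-2}\hat\gammab_{(j)}(\hat\betab)+\tau_j^{-2}\gammab_{(j)}^*(\betab^*)$ in the remaining $p-1$ coordinates. I would add and subtract $\hat\tau_j^{-2}\gammab_{(j)}^*(\betab^*)$ in the off-diagonal part to obtain the decomposition
\[
-\hat\tau_j^{-2}\hat\gammab_{(j)}(\hat\betab)+\tau_j^{-2}\gammab_{(j)}^*(\betab^*)
= -\hat\tau_j^{-2}\bigl(\hat\gammab_{(j)}(\hat\betab)-\gammab_{(j)}^*(\betab^*)\bigr)
- (\hat\tau_j^{-2}-\tau_j^{-2})\gammab_{(j)}^*(\betab^*).
\]
Taking $l_1$ norms and applying the triangle inequality yields three pieces: (i) $|\hat\tau_j^{-2}-\tau_j^{-2}|=\ocal_P(K^2 K_\gamma s_j^2\lambda_j)$ from above; (ii) $\hat\tau_j^{-2}\|\hat\gammab_{(j)}(\hat\betab)-\gammab_{(j)}^*(\betab^*)\|_1 = \Ocal_P(1)\cdot \ocal_P(K_\gamma s_j\lambda_j)$ by part one and Lemma \ref{cor:2}; and (iii) $|\hat\tau_j^{-2}-\tau_j^{-2}|\cdot\|\gammab_{(j)}^*(\betab^*)\|_1 \leq |\hat\tau_j^{-2}-\tau_j^{-2}|\cdot K_\gamma s_j = \ocal_P(K^2 K_\gamma^{2} s_j^{3}\lambda_j)$ using Condition {\bf{($\boldsymbol\Gamma$)}}. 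Piece (iii) dominates, giving the stated rate.

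The main obstacle is bookkeeping rather than a single hard estimate: one must verify that each factor entering the product decomposition is itself $\Ocal_P(1)$ (most crucially $\hat\tau_j^{-2}$, which is why the first conclusion is proved before the second), and that the constants $\phi_0^{-1}$ and $C_2^{-1}$ absorbed into the Lemma \ref{cor:2} rate do not interact with the $K^2 K_\gamma s_j^2\lambda_j$ factor from Lemma \ref{lem:temp1} to inflate the final bound. Once those are in place, the remainder of the argument is routine triangle-inequality algebra, and the $s_j^{3}$ in the final rate emerges naturally as the product of $s_j^{2}$ (from the variance estimation) with the $s_j$-sparsity of $\gammab_{(j)}^*(\betab^*)$.
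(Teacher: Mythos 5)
Your proof is correct and follows essentially the same route as the paper's: the same add-and-subtract decomposition of the $j$-th row, the same three-term triangle inequality, and the same identification of $|\hat\tau_j^{-2}-\tau_j^{-2}|\cdot\|\gammab_{(j)}^*(\betab^*)\|_1$ as the dominant piece producing the $s_j^3$ rate. The only divergence is in establishing $\hat\tau_j^{-2}=\Ocal_P(1)$: the paper argues directly that $\hat\tau_j^{2}$ cannot vanish (since the regression and penalty parts being simultaneously zero would force $W_j(\hat\betab)=0$), whereas you deduce it from $\tau_j^2\geq v_n>0$ via Condition {\bf(CC)} plus the consistency $|\hat\tau_j^2-\tau_j^2|=\ocal_P(1)$, which additionally requires $K^2K_\gamma s_j^2\lambda_j$ to be $\ocal(1)$ --- a condition not listed in the lemma's hypotheses but assumed in every downstream theorem, so this is a harmless (and arguably more rigorous) variant.
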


Lemma \ref{lemma2}  provides   easy to verify sufficient conditions  for the consistency   of a class of semiparametric  estimators  of the precision matrix for censored regression models.  Even in low-dimensional setting, this result appears to be new and highlights specific rate of convergence (see Theorem \ref{thm:clad_temp} for more details). 

 The   one-step estimator $\tilde \betab$ relies crucially on the bias correction step that carefully  projects the residual  vector in the direction close to  the  most efficient score. The next result measures the uniform distance of such projection.

\begin{lemma}
  \label{lemma4}
  Let the setup of Lemma \ref{lem:temp1} hold.
  There exists a fixed
  constant $C$ (independent of $p$ and  $n$), such that the process 
  $\mathbb V_n(\deltab) = \Omegab( \deltab + \betab^*) \left[\mathbb G_n(\deltab + \betab^*) - \mathbb G_n(\betab^*) \right] $ satisfies 
\begin{align*}
&\sup_{\deltab \in \Ccal(r_n, t)}
\left\|   
  \mathbb V_n(\deltab )  \right\|_\infty 
  \leq C \rbr{
\sqrt{\frac{  (r_n \vee r_n^2K_1^2) t 
\log(n p /\delta)}{n}} 
\bigvee
\frac{  t \log(2n p /\delta)}{n} },
\end{align*}
with probability $1-\delta$ and a constant $K_1$ defined in Condition {\rm ({\bf E})}.
\end{lemma}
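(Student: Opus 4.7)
The plan is to decompose $\|\mathbb V_n(\deltab)\|_\infty$ into the product of an $\ell_1$ bound on the rows of $\Omegab(\cdot)$ and a sup-norm bound on the centered process difference $\mathbb G_n(\deltab + \betab^*) - \mathbb G_n(\betab^*)$. By H\"older's inequality, for each coordinate $k \in \{1,\ldots,p\}$,
\[
\left|\mathbb V_n(\deltab)_k\right| \leq \left\|\Omegab(\deltab + \betab^*)_k\right\|_1 \cdot \left\|\mathbb G_n(\deltab + \betab^*) - \mathbb G_n(\betab^*)\right\|_\infty.
\]
The first factor is $\Ocal_P(1)$ uniformly in $k$, by combining Lemma \ref{lemma2} with the triangle inequality and Condition $(\boldsymbol\Gamma)$, which yields $\|\Sigmab^{-1}(\betab^*)_k\|_1 \leq 1 + K_\gamma s_k = \mathcal{O}(1)$. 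It then suffices to control the second factor uniformly over $\deltab \in \Ccal(r_n,t)$ by the stated rate.

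For a pointwise bound, fix a coordinate $k$ and $\deltab \in \Ccal(r_n,t)$, and set $\Delta_{i,k}(\deltab) = \psi_{i,k}(\betab^* + \deltab) - \psi_{i,k}(\betab^*)$. Since $\psi_{i,k}$ depends on $\betab$ only through $\ind\{x_i\betab > 0\}$ and $\mathrm{sign}(y_i - \max(0, x_i\betab))$, the difference $\Delta_{i,k}(\deltab)$ vanishes unless either $x_i\betab^*$ or $y_i - \max(0, x_i\betab^*)$ falls in a strip of width at most $|x_i\deltab| \leq \|x_i\|\|\deltab\|_2 \leq \|x_i\| r_n$. Condition {\bf (E)} bounds the mass of each such strip by $K_1 r_n$ through the function $G_i$, for $r = 0,1,2$. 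After accounting for both the indicator mass and the expectation offset, the effective spread to feed into Bernstein is $\var(\Delta_{i,k}(\deltab)) \lesssim r_n \vee r_n^2 K_1^2$ (up to $K$ and $K_1$ factors), while $|\Delta_{i,k}(\deltab)| \leq 2K$. Bernstein's inequality then yields $|(\mathbb G_n(\betab^* + \deltab) - \mathbb G_n(\betab^*))_k| \lesssim \sqrt{(r_n \vee r_n^2 K_1^2)\log(np/\delta)/n} \vee \log(np/\delta)/n$.

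To promote this pointwise bound to a uniform one, I would partition $\Ccal(r_n,t)$ by support $S \subset \{1,\ldots,p\}$ with $|S| \leq t$ (at most $\binom{p}{t}$ choices, contributing $\mathcal{O}(t\log p)$ to the log-cardinality), and on each support cover the $\ell_2$-ball of radius $r_n$ in $\mathbb R^t$ by an $\epsilon$-net of cardinality $(3r_n/\epsilon)^t$. Choosing $\epsilon$ polynomial in $1/n$ makes the total log-cardinality $\mathcal{O}(t\log(np/\delta))$, and a union bound over net points and the $p$ coordinates produces the claimed rate at the net. The oscillation between neighboring net points is handled by a second invocation of Condition {\bf (E)}: for $\|\deltab_1 - \deltab_2\|_2 \leq \epsilon$, the probability that any of the events defining $\Delta_{i,k}(\deltab_1) \neq \Delta_{i,k}(\deltab_2)$ occurs is $\mathcal{O}(K_1 \epsilon)$, and this oscillation term is dominated by the Bernstein term for the chosen $\epsilon$.

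The main obstacle is precisely this uniform step: the maps $\deltab \mapsto \ind\{x_i(\betab^* + \deltab) > 0\}$ and $\deltab \mapsto \mathrm{sign}(y_i - \max(0, x_i(\betab^* + \deltab)))$ are discontinuous in $\deltab$, so classical chaining or Lipschitz-continuity arguments cannot be applied directly to $\mathbb G_n$. The remedy is to exploit Condition {\bf (E)}, whose bound on the mass of thin strips near the discontinuity surfaces $\{x_i\betab = 0\}$ and $\{\varepsilon_i = 0\}$ plays the role of a probabilistic Lipschitz estimate; this is what makes both the pointwise variance bound and the between-net-point oscillation bound possible. Balancing the covering error against the Bernstein deviation then yields both the variance term $\sqrt{(r_n \vee r_n^2 K_1^2)t\log(np/\delta)/n}$ and the bias term $t\log(np/\delta)/n$ in the final rate.
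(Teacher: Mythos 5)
Your proposal is correct and follows essentially the same route as the paper: a support-by-support $\epsilon$-net of $\Ccal(r_n,t)$ with $\log$-cardinality $\Ocal(t\log(np))$, Bernstein's inequality at each net point with variance of order $r_n \vee r_n^2K_1^2$ extracted from the strip-mass bounds of Condition {\bf (E)} (the $r_n$ piece from the flipped indicators, the $r_n^2K_1^2$ piece from the conditional-expectation offset via $G_i$), and control of the between-net oscillation by monotonicity of the indicators plus the same strip-mass estimate, with $\epsilon$ polynomial in $1/n$ so that this term is dominated. The only structural difference is that you peel off $\|\Omegab(\deltab+\betab^*)_k\|_1$ by H\"older before running the empirical-process argument, whereas the paper keeps $a_{ij}=\eb_j^\top\Omegab X_i$ inside the sum and bounds $\sum_i a_{ij}^2|x_i\deltab|$ by Cauchy--Schwarz; both versions rest on the same implicit boundedness of the $\ell_1$ norms of the rows of $\Omegab$ (your ``$1+K_\gamma s_k=\Ocal(1)$'' is exactly the assumption the paper absorbs into its constant $C_n$), so this is a presentational rather than a substantive difference.
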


 Lemma \ref{lemma4} establishes  a  uniform tail probability bound for a  growing supremum  of an empirical process $\mathbb{V}_n(  \deltab)$. It   is uniform in $\deltab$  and it is growing as supremum is taken over $p$, possibly growing ($p=p(n)$) coordinates of the process.
The proof of Lemma~\ref{lemma4} is further challenged by the non-smooth components  of the process $\mathbb{V}_n(  \deltab)$ itself and the multiplicative nature of the factors within it. It proceeds in two
steps. First, we show that for a fixed  $\deltab$ the 
term $\norm{ \mathbb{V}_n(\deltab)}_\infty$ is small. In the second step,
 we devise a new  epsilon net argument to control the  non-smooth  and multiplicative terms uniformly for all
 $\deltab$ simultaneously. This is established by devising new  representations of the process that allow for small  size of the covering numbers.
In conclusion, Lemma~\ref{lemma4} establishes a uniform bound
$\| I_4\|_\infty = \ocal_P\left( (r_n^{1/2} \vee r_n K_1) t^{1/2} (\log p)^{1/2} \bigvee t \log p / n^{1/2} \right)$ in \eqref{delta_1}.

\subsection{Size of the Remainder Term}

 Size of the remainder term in \eqref{eq:short_bahadur} is controlled by the results of Lemmas 1-6 and we provide details below.
 
\begin{theorem}\label{cor:fixed:ci} 
Let  $\lambda_j = C \left( (\log p / n)^{1/2} \bigvee \left(r_n^{1/2} \bigvee t^{1/4} (\log p / n)^{1/2} \right) t^{3/4} s_j(\log p / n)^{1/2}\right)$ for a constant $C >1$ and let Conditions {\bf{(I)}}, {\bf{(X)}}, {\bf{(E)}}, {\bf{(C)}}, {\bf{(CC)}} and {\bf{($\Gammab$)}} hold. With $s_{\Omega} = \max_j s_j $,
 \begin{align*}
\| \Delta\|_\infty &= \ocal_P\left( (1 \vee d_n n^{1/2} ) s_\Omega^3 \lambda_j \bigvee d_n^2 n^{1/2}  \bigvee r_n^{1/2} t^{1/2} (\log (p \vee n))^{1/2} \bigvee t \log (p \vee n) / n^{1/2} \right). 
 \end{align*}
 \end{theorem}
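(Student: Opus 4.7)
My plan is to take the decomposition already laid out in \eqref{delta_1}, $\Delta = I_1 + I_2 + I_3 + I_4$, and bound each of the four summands in the $\ell_\infty$ norm using Lemmas \ref{lemma0}--\ref{lemma4}. The four terms split cleanly into two ``algebraic'' contributions ($I_1$ and $I_3$, each involving the precision-matrix error, controlled through Lemma \ref{lemma2}) and two ``empirical-process'' contributions ($I_2$ controlled by Lemma \ref{lemma1} and $I_4$ controlled by Lemma \ref{lemma4}). No Taylor-type cross terms arise because $\Delta$ is defined as the \emph{exact} sum $I_1+I_2+I_3+I_4$, so the claimed rate is simply the maximum of the four individual bounds.

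For $I_1 = \sqrt{n}(I - \Omegab(\hat\betab)\Sigmab(\betab^*))(\hat\betab-\betab^*)$, I would factor $\Omegab(\hat\betab)\Sigmab(\betab^*) - I = (\Omegab(\hat\betab) - \Sigmab^{-1}(\betab^*))\Sigmab(\betab^*)$ and apply H\"older twice to get
\[
\|I_1\|_\infty \leq \sqrt{n}\,\max_j\bigl\|\Omegab(\hat\betab)_j - \Sigmab^{-1}(\betab^*)_j\bigr\|_1\,\max_{j,k}|\Sigmab_{jk}(\betab^*)|\,\|\hat\betab-\betab^*\|_1.
\]
Lemma \ref{lemma2} gives $\ocal_P(s_\Omega^3\lambda_j)$ for the first factor, Conditions {\bf (X)} and {\bf (CC)} bound the middle factor by $K^2$ (since $|W_{ij}|\leq K$), and Condition {\bf (I)} delivers $\|\hat\betab-\betab^*\|_1 = \Ocal_P(d_n)$, producing $\|I_1\|_\infty = \ocal_P(d_n n^{1/2} s_\Omega^3\lambda_j)$. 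For $I_2$, Lemma \ref{lemma1} says the non-linear remainder of $n^{-1}\sum_i\EE\psi_i(\betab)$ equals $\ocal_P(\|\betab-\betab^*\|_1)(\betab^*-\betab)$. Applying the dual H\"older bound $\|\Omegab v\|_\infty \leq \max_j\|\Omegab_j\|_\infty\|v\|_1$ with $v = \sqrt{n}\ocal_P(d_n)(\hat\betab-\betab^*)$ and invoking the entry-wise bound $\max_j\|\Omegab(\hat\betab)_j\|_\infty = \Ocal_P(1)$ (from Lemma \ref{lemma2} together with Condition ({\boldmath$\Gamma$}), which gives $\|\Sigmab^{-1}(\betab^*)_j\|_\infty \leq K_\gamma/\tau_j^2 = \Ocal(1)$) then yields $\|I_2\|_\infty = \ocal_P(d_n^2 n^{1/2})$.

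For $I_3 = (2f(0))^{-1}(\Omegab(\hat\betab) - \Sigmab^{-1}(\betab^*))n^{-1/2}\sum_i\psi_i(\betab^*)$, the same H\"older argument gives $\|I_3\|_\infty \leq \max_j\|\Omegab(\hat\betab)_j - \Sigmab^{-1}(\betab^*)_j\|_1\,\|n^{-1/2}\sum_i\psi_i(\betab^*)\|_\infty$. The first factor is $\ocal_P(s_\Omega^3\lambda_j)$ by Lemma \ref{lemma2}, and for the second I observe that the summands $\psi_{i,j}(\betab^*) = \mbox{sign}(y_i - \max\{0,x_i\betab^*\})\,x_{ij}\ind(x_i\betab^*>0)$ are independent, bounded by $K$, and have mean zero because the error has median $0$ under Condition {\bf (E)}; Hoeffding's inequality with a union bound over $j$ then gives $\|n^{-1/2}\sum_i\psi_i(\betab^*)\|_\infty = \Ocal_P(\sqrt{\log(p\vee n)})$, which is absorbed into the $\sqrt{\log p/n}$ already carried by $\lambda_j$, producing the $s_\Omega^3\lambda_j$ contribution in the theorem. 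Finally, for $I_4 = (2f(0))^{-1}\Omegab(\hat\betab)\sqrt{n}[\mathbb{G}_n(\hat\betab) - \mathbb{G}_n(\betab^*)]$, I would apply Lemma \ref{lemma4} at $\deltab = \hat\betab - \betab^*$, which under Condition {\bf (I)} lies in $\Ccal(r_n,t)$ with probability tending to one; multiplying Lemma \ref{lemma4}'s bound by $\sqrt{n}$ gives $\|I_4\|_\infty = \ocal_P\bigl(r_n^{1/2}t^{1/2}(\log(p\vee n))^{1/2} \vee t\log(p\vee n)/n^{1/2}\bigr)$.

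The main obstacle is bookkeeping rather than a deep new inequality: the choice of $\lambda_j$ fixed in Lemma \ref{lem:temp1} must simultaneously validate the precision-matrix estimate used in both $I_1$ and $I_3$ while keeping the aggregate bound small, and the entry-wise row control needed to turn Lemma \ref{lemma2} into a matrix-vector bound must be threaded through both H\"older arguments. A secondary subtlety is that the ``$1 \vee d_n n^{1/2}$'' factor in the theorem reflects which of $I_1$ or $I_3$ dominates: when $d_n n^{1/2} \lesssim 1$ the $I_3$ term of size $s_\Omega^3\lambda_j$ controls the pair, otherwise $I_1$ of size $d_n n^{1/2}s_\Omega^3\lambda_j$ does, and taking the maximum of the four component bounds then produces exactly the stated rate.
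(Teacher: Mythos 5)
Your proposal follows the paper's own proof essentially verbatim: the same decomposition $\Delta = I_1+I_2+I_3+I_4$ from \eqref{delta_1}, the same factorization $I-\Omega(\hat\beta)\Sigma(\beta^*) = (\Sigma^{-1}(\beta^*)-\Omega(\hat\beta))\Sigma(\beta^*)$ for $I_1$, H\"older's inequality combined with Lemma \ref{lemma2} for $I_1$, $I_2$ and $I_3$, Hoeffding's inequality for the score vector in $I_3$, and Lemma \ref{lemma4} evaluated at $\delta=\hat\beta-\beta^*$ for $I_4$. The one loose point you gloss over --- whether the $\sqrt{\log(p\vee n)}$ factor coming from the score in $I_3$ is genuinely absorbed into the $(1\vee d_n n^{1/2})s_\Omega^3\lambda_j$ term when $d_n n^{1/2}\lesssim 1$ --- is treated no more carefully in the paper's own proof, so there is nothing substantive to add.
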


We first notice that the expression above requires $t = \smallocal(n^{1/2} / \log (p \vee n))$, a condition frequently  imposed  in high-dimensional inference (see \cite{zhang_zhang_2014} for example). Then, in the case of low-dimensional problems with $s= \ocal(1)$ and $p = \ocal(1)$, we observe that whenever the initial estimator of rate $r_n$, is in the order of $n^{-1/4 - \epsilon}$, for a small constant $\epsilon >0$, then $\| \Delta\|_\infty = \Ocal_P(n^{-2\epsilon})$. In particular, for a consistent initial estimator, i.e. $r_n = \ocal(n^{-1/2})$ we obtain that $\| \Delta\|_\infty = \Ocal_P(n^{-1/2})$. For high-dimensional problems with $s$ and $p$ growing with $n$, for all initial estimators of the order $r_n$ such that $r_n = \ocal(s_{\betab^*}^{a} (\log p)^b /n^c)$ and $t =\Ocal(s_{\betab^*})$ we obtain that $\| \Delta\| _\infty = \Ocal_P(\bar s^{(a+1)/2} (\log p)^{(b+1)/2}/n^{c/2})$  whenever $\bar s^{(2a+7)/4} (\log p)^{b} /n^{c} = \smallocal(1)$, where $\bar s = t \vee s_\Omega$.
Further discussion is relegated to the comments following Theorem \ref{thm:clad_temp}.

\subsection{Asymptotic Normality of the Leading Term
}
Next, we present the result on the asymptotic normality of the leading term of the Bahadur representation \eqref{eq:short_bahadur}.
\begin{theorem}\label{normality}
Let  $\lambda_j = C \left( (\log p / n)^{1/2} \bigvee \left(r_n^{1/2} \bigvee t^{1/4} (\log p / n)^{1/2} \right) t^{3/4} s_j(\log p / n)^{1/2}\right)$ for a constant $C >1$ and let Conditions {\bf{(I)}}, {\bf{(X)}}, {\bf{(E)}}, {\bf{(C)}}, {\bf{(CC)}} and {\bf{($\Gammab$)}} hold. 
Define $U := \frac{1}{2f(0)}\Sigmab^{-1}(\betab^*)\frac{1}{\sqrt{n}}\sum_{i=1}^n\psi_i(\betab^*) = {\mbox{\scriptsize$\ocal$}}_P(\sqrt{n})$. Furthermore, assume
$$(1 \vee d_n n^{1/2} ) s_\Omega^3 \lambda_j \bigvee d_n^2 n^{1/2} \bigvee r_n^{1/2} t^{1/2} (\log p)^{1/2} \bigvee t \log p / n^{1/2} = \smallocal(1).$$
Denote $\bar s = t \vee s_\Omega$. If $f(0)$, the density of $\varepsilon$ at 0 is known, 
\begin{align*}
\left[\Omegab(\hat{\betab}) \hat\Sigmab(\hat{\betab})\Omegab (\hat{\betab})\right]_{jj}^{-\frac{1}{2}}U_j \xrightarrow[n,p, \bar s\rightarrow\infty]{d} \mathcal{N}\left(0,\frac{1}{4f(0)^2}\right).
\end{align*}
\end{theorem}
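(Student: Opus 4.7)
\textbf{Proof proposal for Theorem \ref{normality}.} The plan is to decompose the claim into two asymptotic pieces via Slutsky's theorem: (i) a central limit theorem for the scalar $U_j$ after normalizing by the \emph{population} standard deviation, and (ii) consistency of the plug-in sandwich variance estimator $[\Omegab(\hat\betab)\hat\Sigmab(\hat\betab)\Omegab(\hat\betab)]_{jj}$ for the corresponding population quantity.

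First I would rewrite
\[
U_j = \frac{1}{\sqrt n}\sum_{i=1}^n Z_i^{(n)},\qquad Z_i^{(n)}:=\frac{1}{2f(0)}\,\eb_j^\top \Sigmab^{-1}(\betab^*)\,\psi_i(\betab^*)^\top,
\]
which is a sum of i.i.d.\ mean-zero scalars in a triangular array (the distribution depends on $n$ through $p=p(n)$ and the sparsity). Using $\psi_i(\betab^*)=\mathrm{sign}(\varepsilon_i)\,w_i(\betab^*)^\top$ together with the median-zero assumption on $\varepsilon_i$ in Condition {\bf(E)} and the independence of $\varepsilon_i$ and $x_i$, one obtains $\mathbb E[\psi_i(\betab^*)^\top\psi_i(\betab^*)]=\mathbb E[w_i(\betab^*)^\top w_i(\betab^*)]=\Sigmab(\betab^*)$ since $\mathrm{sign}(\varepsilon_i)^2=1$ almost surely. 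Hence
\[
\sigma_n^2:=\mathrm{Var}(Z_1^{(n)})=\frac{1}{4f(0)^2}\,\eb_j^\top \Sigmab^{-1}(\betab^*)\Sigmab(\betab^*)\Sigmab^{-1}(\betab^*)\eb_j=\frac{[\Sigmab^{-1}(\betab^*)]_{jj}}{4f(0)^2},
\]
which is bounded away from $0$ and $\infty$ by Condition {\bf(CC)}.

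Next I would verify a Lyapunov (or Lindeberg) condition for the array $\{Z_i^{(n)}\}$. Condition {\bf(X)} gives $\|\psi_i(\betab^*)\|_\infty\le K$, and Lemma~\ref{cor:2} together with Condition ({$\boldsymbol\Gamma$}) ensures $\|\Sigmab^{-1}(\betab^*)\eb_j\|_1 \leq \hat\tau_j^{-2}(1+\|\gammab_{(j)}^*(\betab^*)\|_1)\lesssim 1+K_\gamma s_j$, so $|Z_i^{(n)}|\lesssim s_\Omega/f(0)$ almost surely. A Lyapunov bound with $2+\delta$ moments reduces to checking $s_\Omega^{2+\delta}/n^{\delta/2}\to 0$, which is implied by the remainder-term rate hypothesis of the theorem (in particular $\bar s^3\lambda_j\to 0$ already forces $s_\Omega=\smallocal(n^{1/6})$, so any $\delta>0$ works). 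This delivers $\sigma_n^{-1} U_j\xrightarrow{d}\mathcal N(0,1)$.

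The second piece is to show
\[
\frac{[\Omegab(\hat\betab)\hat\Sigmab(\hat\betab)\Omegab(\hat\betab)]_{jj}}{[\Sigmab^{-1}(\betab^*)]_{jj}}\xrightarrow{P}1.
\]
Writing $[\Omegab\hat\Sigmab\Omegab]_{jj}=\Omegab_j^\top \hat\Sigmab\,\Omegab_j$ and adding/subtracting $\Sigmab^{-1}(\betab^*)_j$ and $\Sigmab(\betab^*)$ yields the bound
\[
\bigl|\Omegab_j^\top \hat\Sigmab\,\Omegab_j-\Sigmab^{-1}_j{}^\top \Sigmab\,\Sigmab^{-1}_j\bigr|
\le 2\|\Omegab_j-\Sigmab^{-1}_j\|_1\bigl(\|\hat\Sigmab\|_\infty\|\Omegab_j\|_1+\|\Sigmab\|_\infty\|\Sigmab^{-1}_j\|_1\bigr)
+\|\Sigmab^{-1}_j\|_1^2\,\|\hat\Sigmab-\Sigmab\|_\infty,
\]
where $\|\cdot\|_\infty$ denotes the entrywise sup-norm on matrices. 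The first factor is controlled by Lemma~\ref{lemma2}; $\|\Omegab_j\|_1,\|\Sigmab^{-1}_j\|_1\lesssim s_\Omega$ by Condition ({$\boldsymbol\Gamma$}); and $\|\hat\Sigmab(\hat\betab)-\Sigmab(\betab^*)\|_\infty$ is small via a two-step argument: a union bound with Hoeffding's inequality controls $\|\hat\Sigmab(\betab^*)-\Sigmab(\betab^*)\|_\infty=\Ocal_P(\sqrt{\log p/n})$, and the increment $\|\hat\Sigmab(\hat\betab)-\hat\Sigmab(\betab^*)\|_\infty$ is bounded using Lemma~\ref{lemma0} applied to the indicators $\ind\{x_i\hat\betab>0\}$ versus $\ind\{x_i\betab^*>0\}$ under Condition {\bf(I)}. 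Combined with the rate assumption of the theorem, the product vanishes. Slutsky's theorem then produces the claimed limit $\mathcal N(0,1/(4f(0)^2))$.

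The main obstacle I anticipate is the control of $\|\hat\Sigmab(\hat\betab)-\Sigmab(\betab^*)\|_\infty$ uniformly in the entries, because the indicator $\ind\{x_i\hat\betab>0\}$ inside $w_i(\hat\betab)$ is non-smooth in $\hat\betab$; this is precisely what Lemma~\ref{lemma0} is engineered to handle, provided the initial estimator's radius $r_n$ and support size $t$ satisfy the sparsity budget appearing in the theorem's remainder condition. All other pieces are routine given the lemmas already established.
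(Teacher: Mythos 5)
Your proposal is correct and its overall architecture --- a CLT for $U_j$ normalized by the population quantity $[\Sigmab^{-1}(\betab^*)]_{jj}^{-1/2}$, consistency of the plug-in sandwich $[\Omegab(\hat\betab)\hat\Sigmab(\hat\betab)\Omegab(\hat\betab)]_{jj}$, and Slutsky --- is exactly the paper's. The one place you genuinely diverge is in how the CLT is executed. The paper first replaces $\psi_i(\betab^*)$ in distribution by $w_i^\top(\betab^*)R_i$ with $R_i=\mbox{sign}(-\varepsilon_i)$ an i.i.d.\ Rademacher sequence independent of $w_i(\betab^*)$ (using the median-zero part of Condition {\bf(E)}), proves a Lyapunov CLT coordinate-by-coordinate for $n^{-1/2}\sum_i W_{ij}(\betab^*)R_i$ together with the covariance identity $\Sigmab(\betab^*)_{j_1 j_2}$, and only then pushes the limit through the linear map $\Sigmab^{-1}(\betab^*)$. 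You instead apply a triangular-array CLT directly to the scalar $Z_i^{(n)}=\tfrac{1}{2f(0)}\eb_j^\top\Sigmab^{-1}(\betab^*)\psi_i(\betab^*)^\top$. Your route is arguably cleaner in the $p\gg n$ regime, since applying a $p\times p$ linear map to a vector of jointly asymptotically normal coordinates with $p=p(n)\to\infty$ is the informal step in the paper's argument; the price is the almost-sure bound $|Z_i^{(n)}|\lesssim s_\Omega$, which should be obtained from Condition {\bf($\Gammab$)} and the boundedness of $\tau_j^{-2}$ rather than from Lemma~\ref{cor:2} (that lemma concerns the estimator $\hat\gammab_{(j)}(\hat\betab)$, not the population row $\Sigmab^{-1}(\betab^*)_j$), together with a Lyapunov exponent that now involves $s_\Omega$. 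On that point your quantitative remark is slightly off: with $s_\Omega=\smallocal(n^{1/6})$ the requirement $s_\Omega^{2+\delta}=\smallocal(n^{\delta/2})$ forces $\delta>1$, not ``any $\delta>0$''; taking $\delta=2$ works and the conclusion is unaffected. Your second piece coincides with the bound $\|\Omegab(\hat\betab)\hat\Sigmab(\hat\betab)\Omegab(\hat\betab)-\Sigmab^{-1}(\betab^*)\|_{\max}=\smallocal_P(1)$ that the paper establishes inside the proof of Theorem~\ref{cor:ci} and imports into the proof of Theorem~\ref{normality}; your decomposition and your use of Hoeffding's inequality plus Lemmas~\ref{lemma0} and~\ref{lemma2} mirror that argument.
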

 
\begin{remark}
\rm
A few remarks are in order. Theorem \ref{normality} implies that the effects of censoring asymptotically disappear. Namely, the limiting distribution only becomes degenerate when the censoring rate asymptotically explodes, implying that  no data is fully observed. However, in all other cases the limiting distribution is fixed and does not depend on the censoring level.
\end{remark}

%

\subsection{Quality of Density Estimation}

Density estimation is a necessary step in the semiparametric inference for left-censored models. Below we present the result guaranteeing good qualities of density estimator proposed in \eqref{eq:density}.

\begin{theorem} \label{thm:f}
There exists a sequence $h_n$ such that  $h_n =  \smallocal (1)$ and $ \lim _{n \to \infty }\hat h_n / h_n =1$ and  $h_n^{-1} (d_n \vee r_n^{1/2} t^{3/4} (\log p / n)^{1/2} \vee t\log p / n) = \smallocal(1)$. Assume Conditions {\bf{(I)}}, {\bf{(X)}} and {\bf{(E)}} hold, 
then 
$$\left|\widehat f(0) - {f(0)}\right| = \smallocal_P(1) .$$
\end{theorem}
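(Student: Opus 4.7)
}
The strategy is to compare $\hat f(0)$ with an oracle version $\tilde f(0)$ obtained by replacing $\hat\betab$ by $\betab^*$ and $\hat h_n$ by $h_n$, and then to show separately that (i) $\tilde f(0)\to_P f(0)$ and (ii) $\hat f(0)-\tilde f(0)=\ocal_P(1)$. The observation driving the whole argument is that on $\{x_i\hat\betab>0,\ y_i\geq x_i\hat\betab\}$ we must have $y_i>0$, so $y_i=x_i\betab^*+\varepsilon_i$ is uncensored and $y_i-x_i\hat\betab=\varepsilon_i-u_i$, where $u_i:=x_i(\hat\betab-\betab^*)$; the oracle satisfies the same identity with $u_i=0$. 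Condition $(\mathbf X)$ gives $\max_i|u_i|\leq K\|\hat\betab-\betab^*\|_1=\Ocal_P(K d_n)$.

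\paragraph{Step 1: consistency of the oracle.}
Conditional on $X$, the oracle numerator is a sum of independent Bernoullis with mean $\ind(x_i\betab^*>0)(F(h_n)-F(0))$. The Lipschitz property in Condition $(\mathbf E)$ gives the bias bound $|(F(h_n)-F(0))/h_n-f(0)|\leq L_0 h_n/2=\smallocal(1)$. The denominator $n^{-1}\sum_i\ind(x_i\betab^*>0)$ concentrates around a positive limit (its mean is bounded away from $0$ because $f(0+)>0$ and $G_i(z,\betab^*,0)=\smallocal(z)$ near $0$), and a Bernstein-type inequality applied to numerator and denominator separately gives the stochastic error of order $(n h_n)^{-1/2}$, which is $\smallocal(1)$ since the stated bandwidth condition forces $n h_n\to\infty$. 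Hence $\tilde f(0)\to_P f(0)$.

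\paragraph{Step 2: plug-in error.}
Write the difference of indicators as
\[
\ind(x_i\hat\betab>0)\ind(u_i\leq\varepsilon_i\leq \hat h_n+u_i)-\ind(x_i\betab^*>0)\ind(0\leq\varepsilon_i\leq h_n),
\]
and split into a \emph{selection mismatch} $\ind(x_i\hat\betab>0)\ne\ind(x_i\betab^*>0)$, which is bounded pointwise by $\ind(|x_i\betab^*|\leq K d_n)$, and a \emph{translation/bandwidth mismatch} contained in the symmetric difference of intervals $[u_i,\hat h_n+u_i]\,\triangle\,[0,h_n]$ for $\varepsilon_i$, whose total length is $\Ocal(d_n+|\hat h_n-h_n|)$. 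Condition $(\mathbf E)$'s $G_i(z,\betab^*,0)=\smallocal(z)$ controls the expected fraction of the first term by $\smallocal(d_n)$, while $f_{\max}<\infty$ controls the expected fraction of the second by $\Ocal(f_{\max}(d_n+|\hat h_n-h_n|))$. Lemma~\ref{lemma0} (applied with $\deltab=\hat\betab-\betab^*\in\Ccal(r_n,t)$ under Condition $(\mathbf I)$) upgrades these expectation bounds to uniform-in-$\hat\betab$ high-probability bounds, contributing an extra $\sqrt{r_n t^{3/2}\log(np)/n}\vee t\log(np)/n$ which is also $\smallocal(h_n)$ by the stated bandwidth hypothesis. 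Dividing by $h_n$ and using $\hat h_n/h_n\to 1$ to absorb the outer $\hat h_n^{-1}$ vs.\ $h_n^{-1}$ discrepancy yields $\hat f(0)-\tilde f(0)=\ocal_P(1)$.

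\paragraph{Main obstacle.}
The delicate step is Step~2: the indicator $\ind(u_i\leq\varepsilon_i\leq\hat h_n+u_i)$ depends on $\hat\betab$ through both the \emph{boundaries} and the random shift $u_i$, while $\hat h_n$ is itself a random function of the data. A pointwise Chebyshev bound does not suffice, and I expect to need the uniform empirical-process bound of Lemma~\ref{lemma0} together with the crucial $\smallocal(z)$-at-the-origin refinement of Condition $(\mathbf E)$ to show that the boundary mass of $\varepsilon$ in the affected region is of smaller order than $h_n$.
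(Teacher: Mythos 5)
Your proposal is correct and follows essentially the same route as the paper's proof: reduce $\hat f(0)$ to an oracle version with $\betab^*$ and $h_n$, split the plug-in error into a selection-mismatch term (controlled by Condition~{\bf(E)} and the uniform bound of Lemma~\ref{lemma0}) and an interval-shift term (controlled by $f_{\max}$ and Hoeffding), then conclude by Slutsky. Your Step~1 spells out the oracle's consistency (bias via the Lipschitz bound, variance via Bernstein) somewhat more explicitly than the paper does, but this is a matter of detail rather than a different argument.
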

 
 Together with Theorem \ref{normality} we can provide the next result.

\begin{corollary}\label{cor1}
With the choice of density estimator as in \eqref{eq:density}, under conditions of Theorem \ref{normality} and \ref{thm:f}, the results of Theorem \ref{normality} continue to hold unchanged, i.e.,
\begin{align*}
\left[\Omegab(\hat{\betab}) \hat\Sigmab(\hat{\betab})\Omegab (\hat{\betab})\right]_{jj}^{-\frac{1}{2}}U_j \cdot 2 \widehat f(0) \xrightarrow[n,p,\bar s\rightarrow\infty]{d} \mathcal{N}\left(0,1\right).
\end{align*}
\end{corollary}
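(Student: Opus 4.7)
The plan is to decouple the claim of Corollary~\ref{cor1} into two ingredients that are already in hand and then invoke Slutsky's theorem. The distributional convergence of $[\Omegab(\hat{\betab}) \hat\Sigmab(\hat{\betab})\Omegab(\hat{\betab})]_{jj}^{-1/2} U_j$ to $\mathcal{N}(0, 1/(4 f(0)^2))$ is exactly Theorem~\ref{normality}, which treats $f(0)$ as a deterministic (known) constant appearing only through the variance of the limit. The consistency of the bandwidth-based estimator $\widehat f(0)$ for $f(0)$ in probability is Theorem~\ref{thm:f}. Combining these converts the unknown nuisance factor $2 f(0)$ into its random surrogate $2\widehat f(0)$ without perturbing the limit.

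The key steps in order are the following. First, I will verify that the rate constraints required by Theorem~\ref{normality} (namely $(1 \vee d_n n^{1/2}) s_\Omega^3 \lambda_j \vee d_n^2 n^{1/2} \vee r_n^{1/2} t^{1/2} (\log p)^{1/2} \vee t \log p / n^{1/2} = \smallocal(1)$) and those of Theorem~\ref{thm:f} (existence of a bandwidth sequence $h_n = \smallocal(1)$ with $h_n^{-1}(d_n \vee r_n^{1/2} t^{3/4} (\log p/n)^{1/2} \vee t \log p / n) = \smallocal(1)$) can be imposed simultaneously under Conditions~\textbf{(I)}, \textbf{(X)}, \textbf{(E)}, \textbf{(C)}, \textbf{(CC)}, and~\textbf{($\Gammab$)}; this is a bookkeeping check on the initial-estimator rates. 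Second, abbreviate
\begin{equation*}
T_n := \bigl[\Omegab(\hat{\betab}) \hat\Sigmab(\hat{\betab})\Omegab(\hat{\betab})\bigr]_{jj}^{-1/2} U_j.
\end{equation*}
By Theorem~\ref{normality}, $T_n \xrightarrow{d} Z$ with $Z \sim \mathcal{N}(0, 1/(4 f(0)^2))$, and by Theorem~\ref{thm:f}, $2\widehat f(0) = 2 f(0) + \smallocal_P(1)$. Third, Condition~\textbf{(E)} guarantees $f(0) > 0$, so $2\widehat f(0) \xrightarrow{p} 2 f(0)$, a strictly positive deterministic constant, and Slutsky's theorem gives
\begin{equation*}
2 \widehat f(0) \cdot T_n \xrightarrow{d} 2 f(0) \cdot Z \sim \mathcal{N}\!\left(0,\; 4 f(0)^2 \cdot \frac{1}{4 f(0)^2}\right) = \mathcal{N}(0,1),
\end{equation*}
which is the assertion of the corollary.

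There is no deep obstacle at this stage, since all of the substantive probabilistic work, namely the Bahadur expansion \eqref{eq:short_bahadur}, the control of the precision-matrix estimator in Lemmas~\ref{cor:2}--\ref{lemma2}, the uniform empirical-process bound of Lemma~\ref{lemma4}, and the bias-corrected density consistency of Theorem~\ref{thm:f}, has already been absorbed upstream. The only matters requiring care are (i) the strict positivity $f(0) > 0$, which prevents Slutsky from producing a degenerate denominator and is precisely guaranteed by Condition~\textbf{(E)}, and (ii) the compatibility of the two sets of rate conditions, which holds under the same regime already used to make the Bahadur remainder $\|\Delta\|_\infty$ asymptotically negligible. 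No new stochastic analysis is required beyond the two cited theorems.
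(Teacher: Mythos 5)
Your proposal is correct and follows essentially the same route as the paper: both arguments combine Theorem \ref{normality} with the consistency of $\widehat f(0)$ from Theorem \ref{thm:f}, use Condition \textbf{(E)} to ensure $f(0)$ is bounded away from zero, and conclude by Slutsky's theorem that the scaling by $2\widehat f(0)$ converts the limiting variance $1/(4f(0)^2)$ into $1$.
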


\begin{remark}
Observe that the result above is robust   in the sense that the result holds regardless of the particular distribution of the model error \eqref{model}. Condition {\bf{(E)}} only assumes minimal regularity conditions on the existence and smoothness of the density of the model errors. In the presence of censoring, our result is unique as it allows $p \gg n$, and yet it successfully estimates the variance of the estimation error.
\end{remark}

\subsection{Confidence Regions}     

Combining all the results obtained in previous sections we arrive at the main conclusions.

\begin{theorem}\label{cor:ci} 
Let  $\lambda_j = C \left( (\log p / n)^{1/2} \bigvee \left(r_n^{1/2} \bigvee t^{1/4} (\log p / n)^{1/2} \right) t^{3/4} s_j(\log p / n)^{1/2}\right)$ for a constant $C >1$ and let Conditions {\bf{(I)}}, {\bf{(X)}}, {\bf{(E)}}, {\bf{(C)}}, {\bf{(CC)}} and {\bf{($\Gammab$)}} hold.
Furthermore, assume 
\begin{align*}
(1 \vee d_n n^{1/2} ) s_\Omega^3 \lambda_j \bigvee d_n^2 n^{1/2} \bigvee r_n^{1/2} t^{1/2} (\log p)^{1/2} \bigvee t \log p / n^{1/2} = \smallocal(1),
\end{align*}
for $s_{\Omega} = \max_j s_j$. Denote $\bar s = t \vee s_\Omega$.
Let $I_{n}$ and   $a_n$  be defined in  \eqref{eq:ci} and \eqref{eq:pi}.  Then,  for all vectors $\cbb=\eb_j$ and any $j \in \{ 1,\dots, p\}$, when $n,p, \bar s \to \infty$ we have 
  \begin{align*}  
&  \PP_{\betab} \left( \cbb^\top  \betab^*  \in I_{n} \right)   = 1- 2 \alpha 
  \end{align*}
\end{theorem}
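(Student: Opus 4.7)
The plan is to obtain the result as an assembly of the three main ingredients already in hand: the Bahadur representation (Theorem \ref{cor:fixed:ci}), the asymptotic normality of the leading term (Theorem \ref{normality} together with Corollary \ref{cor1}), and a Slutsky-type argument that propagates these into a statement about the studentized pivot. Concretely, I would project \eqref{eq:short_bahadur} onto the $j$-th coordinate to get
$$\sqrt{n}\bigl(\tilde\beta_j-\beta_j^*\bigr) \;=\; U_j \;+\; \Delta_j,$$
where $U_j$ is the $j$-th entry of the leading term $U=\frac{1}{2f(0)}\Sigmab^{-1}(\betab^*)n^{-1/2}\sum_i \psi_i(\betab^*)$, and $\Delta_j$ is the $j$-th coordinate of the remainder vector $\Delta$. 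The rate condition in the statement is exactly the hypothesis of Theorem \ref{cor:fixed:ci}, which gives $\|\Delta\|_\infty=\smallocal_P(1)$, hence in particular $\Delta_j=\smallocal_P(1)$ for every $j$.

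Next, I would divide both sides by the studentizing factor $\sigma_{n,j}:=[\Omegab(\hat\betab)\hat\Sigmab(\hat\betab)\Omegab(\hat\betab)]_{jj}^{1/2}/(2\hat f(0))$ that appears in $a_n=z_\alpha\sigma_{n,j}$. By Corollary \ref{cor1} we already know $U_j/\sigma_{n,j} \xrightarrow{d}\mathcal{N}(0,1)$. To conclude I need $\Delta_j/\sigma_{n,j}=\smallocal_P(1)$, which follows from $\Delta_j=\smallocal_P(1)$ provided $\sigma_{n,j}$ stays bounded away from zero in probability. For the latter, I would use Lemma \ref{lemma2} (which gives $\hat\tau_j^{-2}=\Ocal_P(1)$ and $\|\Omegab(\hat\betab)_j-\Sigmab^{-1}(\betab^*)_j\|_1=\smallocal_P(1)$) together with Theorem \ref{thm:f} for $\hat f(0)\to f(0)$, plus a standard consistency argument for $\hat\Sigmab(\hat\betab)$ based on Condition \textbf{(X)} and Lemma \ref{lemma0}. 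This implies $[\Omegab(\hat\betab)\hat\Sigmab(\hat\betab)\Omegab(\hat\betab)]_{jj}\to \Sigmab^{-1}(\betab^*)_{jj}$ in probability, and Condition \textbf{(CC)} (smallest eigenvalue $v_n$ bounded below, $1/v_n=\Ocal(1)$) guarantees the limit is strictly positive, so $\sigma_{n,j}$ is bounded below. Slutsky's theorem then yields
$$\sqrt{n}\bigl(\tilde\beta_j-\beta_j^*\bigr)/\sigma_{n,j}\;\xrightarrow{d}\;\mathcal{N}(0,1).$$

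The conclusion follows immediately: $\PP_{\betab}(\beta_j^*\in I_n)=\PP\bigl(|\sqrt{n}(\tilde\beta_j-\beta_j^*)/\sigma_{n,j}|\leq z_\alpha\bigr)\to \Phi(z_\alpha)-\Phi(-z_\alpha)=1-2\alpha$, since $z_\alpha$ is the $(1-\alpha)$-th normal quantile. The main obstacle is the cleanest verification of the last bullet above, namely showing the studentizing factor is bounded below in probability uniformly enough for Slutsky to apply; everything else is a straightforward citation of the preceding lemmas and theorems. In particular, one must check that the convergence rates supplied by Lemmas \ref{lemma2} and \ref{lem:temp1} for $\Omegab(\hat\betab)$ and by Lemma \ref{lemma0} for $\hat\Sigmab(\hat\betab)$ are strong enough to make the plug-in $[\Omegab\hat\Sigmab\Omegab]_{jj}$ converge to its population target $\Sigmab^{-1}(\betab^*)_{jj}$, which is safeguarded by Condition \textbf{(CC)}. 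Once this is in place, no additional argument is needed, and the theorem drops out as a direct corollary of \ref{cor:fixed:ci}, \ref{normality}, \ref{thm:f}, and \ref{cor1}.
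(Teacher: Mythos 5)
Your proposal is correct and follows essentially the same route as the paper: the paper's proof likewise invokes Corollary \ref{cor1} together with a Slutsky/Wald argument, and devotes its effort to precisely the step you flag as the main obstacle, namely bounding $\left\| \Omegab(\hat\betab) \hat\Sigmab (\hat \betab) \Omegab(\hat\betab) - \Sigmab^{-1} (\betab^*)\right\|_{\max}$ via Lemma \ref{lemma0}, Lemma \ref{lemma2} and a Hoeffding-type concentration bound for $\hat\Sigmab(\hat\betab)$, with Condition \textbf{(CC)} guaranteeing the limit is bounded away from zero. The ingredients you list are exactly those the paper uses to carry out that verification.
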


 The statements of Theorems \ref{cor:fixed:ci}  and \ref{normality} also hold in
a uniform sense, and thus the confidence intervals   are honest. In particular, the  confidence interval $I_n$ does not suffer from 
the problems arising from the non--uniqueness of $\betab^*$.  
We consider the set of parameters
$
\mathcal{B} = \{\betab \in \RR^p: \# \{j: \betab_j \neq 0\} \leq \bar s \}.
$
 Let $\PP_{\betab^*}$ be the distribution of the data under the  model \eqref{model}. Then
the following   holds.

\begin{theorem}\label{cor:uniform:ci} 
Under the setup and assumptions of Theorem \ref{cor:ci}   when $n,p,\bar s \to \infty$
  \begin{align*}  
&  \sup_{\betab \in \mathcal{B}}\PP_{\betab} \left( \cbb^\top  \betab^*  \in I_{n} \right)   = 1- 2 \alpha.
  \end{align*}
\end{theorem}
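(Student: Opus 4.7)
The plan is to upgrade the pointwise coverage result of Theorem~\ref{cor:ci} to a uniform one by verifying that each ingredient in the proof chain—the Bahadur remainder bound, the asymptotic normality of the leading term, and the density estimation—remains valid with the appropriate uniformity in $\betab^* \in \mathcal{B}$. The key structural observation is that none of the probability bounds established in Lemmas~\ref{lemma0}--\ref{lemma4}, nor the rate in Theorem~\ref{cor:fixed:ci}, depend on the particular value of $\betab^*$; they depend only on the sparsity bound $\bar s$, the design constants from Condition~\textbf{(X)}, the density constants from Condition~\textbf{(E)}, the compatibility constants from Conditions~\textbf{(C)} and~\textbf{(CC)}, and the graph parameters from Condition~\textbf{(}$\boldsymbol{\Gamma}$\textbf{)}. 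Since $\mathcal{B}$ collects all sparse vectors with $\|\betab\|_0 \leq \bar s$ and these structural constants are specified uniformly over this class, the probability statements propagate uniformly.

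Concretely, I would proceed in three steps. First, I would re-examine the residual decomposition \eqref{delta_1} and the bound in Theorem~\ref{cor:fixed:ci}: because the event $\{\|\Delta\|_\infty \leq \varepsilon_n\}$ (with $\varepsilon_n = \smallocal(1)$ chosen as in the hypothesis of Theorem~\ref{cor:ci}) is controlled by the lemmas whose constants are $\betab^*$-free, I obtain
\[
\sup_{\betab^* \in \mathcal{B}} \PP_{\betab^*}\bigl( \|\Delta\|_\infty > \varepsilon_n \bigr) \longrightarrow 0.
\]
The initial estimator's rate ($r_n$, $d_n$, $t$) from Condition~\textbf{(I)} must also hold uniformly over $\mathcal{B}$; for the $\ell_1$-penalized CLAD studied in \cite{muller_vdgeer_2014} this is indeed established with a probability bound that only relies on the design and error conditions. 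Similarly, Theorem~\ref{thm:f} gives a uniform consistency $|\hat f(0) - f(0)| = \smallocal_P(1)$ uniformly over $\mathcal{B}$.

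Second, and this is the main obstacle, I need uniform asymptotic normality of the leading term
\[
T_n(\betab^*) := \bigl[\Omegab(\hat\betab) \hat\Sigmab(\hat\betab) \Omegab(\hat\betab)\bigr]_{jj}^{-1/2} \cdot 2\widehat f(0) \cdot U_j(\betab^*).
\]
Pointwise normality (Corollary~\ref{cor1}) is not quite enough; I would strengthen it to a Berry--Esseen style statement by applying the Berry--Esseen theorem to the normalized sum $\frac{1}{\sqrt n}\sum_i [\Sigmab^{-1}(\betab^*)\psi_i(\betab^*)]_j$. The summands are bounded (by boundedness of $X$ and the sign structure of $\psi_i$), and the variance is bounded below via the uniform lower bound on $v_n$ in Condition~\textbf{(CC)} and $f(0+) > 0$ in Condition~\textbf{(E)}. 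Thus one obtains a rate $\Ocal(n^{-1/2})$ on $\sup_{\betab^* \in \mathcal{B}} \sup_{t \in \RR} |\PP_{\betab^*}(T_n \leq t) - \Phi(t)|$, modulo the plug-in error of the studentization, which is in turn controlled by Lemma~\ref{lemma2}, Theorem~\ref{thm:f}, and the uniform concentration of $\hat\Sigmab(\hat\betab)$ around $\Sigmab(\betab^*)$.

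Third, I would combine the pieces: writing
\[
\cbb^\top \tilde\betab - \cbb^\top \betab^* = \frac{U_j(\betab^*)}{\sqrt n} + \frac{\Delta_j}{\sqrt n},
\]
the uniform negligibility of $\Delta$ together with the uniform Berry--Esseen bound and the uniform consistency of the variance estimator and $\hat f(0)$ yields $\sup_{\betab^* \in \mathcal{B}} |\PP_{\betab^*}(\cbb^\top \betab^* \in I_n) - (1-2\alpha)| \to 0$, which is the claim. The only subtlety worth flagging is that the studentization denominator must be bounded away from zero uniformly; this follows because $\hat\tau_j^{-2} = \Ocal_P(1)$ with a uniform bound (Lemma~\ref{lemma2}) and $[\Sigmab(\betab^*)]_{jj}$ is bounded above and below uniformly by Condition~\textbf{(CC)}.
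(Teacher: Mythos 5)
Your proposal is correct and follows the same basic route as the paper, whose own proof of this theorem is a two-line remark: it simply observes that the Bahadur representation \eqref{delta_1} remains accurate uniformly over the sparse class $\mathcal{B}$ because none of the constants in Lemmas~\ref{lemma0}--\ref{lemma4} or Theorem~\ref{cor:fixed:ci} depend on the particular $\betab^*$, and then asserts that all steps of the pointwise argument carry over. Where you go beyond the paper is in the second step: you correctly identify that pointwise convergence in distribution of the studentized leading term is not, by itself, enough to conclude uniform convergence of coverage probabilities over a parameter class whose dimension and sparsity grow with $n$, and you close that gap with a Berry--Esseen bound for the bounded summands $[\Sigmab^{-1}(\betab^*)\psi_i(\betab^*)]_j$, using the uniform lower bound on the variance from Condition \textbf{(CC)} and $f(0+)>0$. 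This is exactly the detail the paper leaves implicit, and it is the right way to make the uniformity claim rigorous; your additional remark that the studentization denominator must be bounded away from zero uniformly (via $\hat\tau_j^{-2}=\Ocal_P(1)$ from Lemma~\ref{lemma2}) is likewise a point the paper's proof does not spell out but tacitly relies on. In short, same skeleton as the paper, with the quantitative normal approximation supplying the rigor the paper's one-line proof omits.
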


Previous results depend on a set of  high-level conditions imposed on the initial estimate. Moreover, rates depend on the initial estimator precisely and to better understand them we present here their summary when the initial estimator   $\hat \betab$ is chosen to be penalized CLAD estimator of   \cite{muller_vdgeer_2014}.
 
\begin{theorem} \label{thm:clad_temp}
Let $\hat \betab$ be defined as in \cite{muller_vdgeer_2014}
with a  choice of the tuning parameter $\lambda =A_2 K \left(\sqrt{ {2\log(2p)}/{n}} + \sqrt{ {\log p}/{n}} \right)$ for a constant $A_2>16$ and independent of $n$ and $p$.
 Assume that $\bar s^2 (\log p)^{1/4} / n^{1/4} =\smallocal(1)$, for $\bar s = s_{\betab^*} \vee s_{\Omega}$ with   $s_{\Omega} = \max_j s_j $. 

(i) Suppose that conditions {\bf{(X)}}, {\bf{(E)}}, {\bf{(C)}}, {\bf{(CC)}} and {\bf{($\Gammab$)}} hold. Moreover, let  $\lambda_j = C \sqrt{\log p / n} $ for a constant $C >1$.
Then
\begin{align}\label{eq:gamma}
\left\|\hat\gammab_{(j)}(\hat\betab)-\gammab_{(j)}^*(\betab^*)\right\|_1 = \ocal_P \left(  \frac{ K_\gamma }{\phi_0^2 C_2 }   s_j \sqrt{\log p/n}\right).
\end{align}
(ii)  For   $j=1,\dots, p$ and $\zetab^*$ and $\hat \zetab$ defined in \eqref{eqn:zeta*}  
\begin{align*}
\left| \hat \tau_j^2(\lambda_j)- \tau_j^2  \right| =  \mathcal{O}_P \left( K^2 K_\gamma s_j^2 \sqrt{\log (p \vee n)/n}   \right).
\end{align*}

(iii) Let $\Omegab (\hat\betab) $ defined in \eqref{eq:theta}.
Then, 
for $\hat \tau_j^{2}$ as in \eqref{tau_hat}, we have 
$
\hat \tau_j^{-2} = \mathcal{O}_P(1).
$
Moreover, 
\begin{align*}
\left\|\Omegab(\hat{\betab})_j-\Sigmab^{-1}(\betab^*)_j\right\|_1 = \ocal_P \left(K^2 K_\gamma^2 s_j^3 \sqrt{\log (p \vee n)/n} \right)
\end{align*}

(iv) Let $\tilde \betab$ be defined as in  \eqref{beta_os} with 
$\Omegab (\hat\betab) $ defined in \eqref{eq:theta},  $\hat \Sigmab(\hat \betab)$ defined in \eqref{eq:sigma} 
and $\hat f(0)$ as defined in \eqref{eq:density}. Then, for $\bar s = s_{\betab^*} \vee s_{\Omega}$ with   $s_{\Omega} = \max_j s_j $, the size of the residual term in \eqref{eq:short_bahadur} is 
  \begin{align*}
  \| \Delta\|_\infty =
 \Ocal_P\left( \frac{\bar s^4 \log (p \vee n)}{n^{1/2}} \bigvee \frac{s_{\betab^*}^{3/4} (\log (p \vee n))^{3/4}}{n^{1/4}} \right).
 \end{align*}
 
(v) Assume that  $\bar s^{3/4} (\log p)^{3/4} / n^{1/4} =\smallocal(1)$,   for $\bar s = s_{\betab^*}  \vee s_{\Omega}$ with   $s_{\Omega} = \max_j s_j$.
Let $I_{n}$ and   $a_n$  be defined in  \eqref{eq:ci} and \eqref{eq:pi}.  Then,  for all vectors $\cbb=\eb_j$ and any $j \in \{ 1,\dots, p\}$, when $\bar s, n,p \to \infty$ we have 
  \begin{align*}  
&  \PP_{\betab} \left( \cbb^\top  \betab^*  \in I_{n} \right)   = 1- 2 \alpha .
  \end{align*}
  
\end{theorem}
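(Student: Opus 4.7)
The plan is to verify Condition (I) for the penalized CLAD estimator and then plug its rates into Lemmas \ref{cor:2}--\ref{lemma4} and Theorems \ref{cor:fixed:ci}, \ref{cor:ci}, carrying out the algebraic simplification that the stated sparsity assumption $\bar s^2 (\log p)^{1/4}/n^{1/4} = \smallocal(1)$ enables.

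First, I would invoke the properties of the penalized CLAD estimator discussed immediately after Condition (I): with the tuning parameter $\lambda = A_2 K(\sqrt{2\log(2p)/n} + \sqrt{\log p/n})$ and $A_2 > 16$, the results of \cite{muller_vdgeer_2014} combined with their extension give Condition (I) with $r_n = (s_{\betab^*}\log p/n)^{1/2}$, $d_n = s_{\betab^*}\sqrt{\log p/n}$, and $t = \Ocal_P(s_{\betab^*})$ (using $\lambda_{\max}(X^\top X)/n = \Ocal(1)$ implicitly from Condition (X)). So parts (i)--(v) reduce to substituting these rates into the existing general theorems.

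For part (i), the main algebraic step is to justify that $\lambda_j = C\sqrt{\log p/n}$ is a valid choice in Lemma \ref{cor:2}. Plugging the CLAD rates into the complex expression, one computes $r_n^{1/2} \vee t^{1/4}(\log p/n)^{1/2} \asymp s_{\betab^*}^{1/4}(\log p/n)^{1/4}$, and therefore $(r_n^{1/2} \vee t^{1/4}(\log p/n)^{1/2}) t^{3/4} s_j (\log p/n)^{1/2} \asymp s_{\betab^*} s_j (\log p/n)^{3/4} \leq \bar s^2 (\log p/n)^{3/4}$. The sparsity assumption $\bar s^2 (\log p)^{1/4}/n^{1/4} = \smallocal(1)$ is exactly what forces this term to be $\smallocal(\sqrt{\log p/n})$, so the simpler choice $\lambda_j = C\sqrt{\log p/n}$ dominates and Lemma \ref{cor:2} yields \eqref{eq:gamma}. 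Parts (ii) and (iii) then follow directly by substituting this $\lambda_j$ into Lemmas \ref{lem:temp1} and \ref{lemma2}.

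For part (iv), I would plug the CLAD rates and $\lambda_j = C\sqrt{\log p/n}$ into Theorem \ref{cor:fixed:ci} and identify the dominant terms among the four summands. Concretely: (a) $d_n n^{1/2} = s_{\betab^*}\sqrt{\log p} \geq 1$ eventually, so $(1 \vee d_n n^{1/2}) s_\Omega^3 \lambda_j \asymp \bar s^4 \log (p\vee n)/\sqrt{n}$; (b) $d_n^2 n^{1/2} \asymp s_{\betab^*}^2 \log p/\sqrt{n}$, dominated by (a); (c) $r_n^{1/2} t^{1/2} (\log(p\vee n))^{1/2} \asymp s_{\betab^*}^{3/4} (\log(p\vee n))^{3/4}/n^{1/4}$; (d) $t\log(p\vee n)/\sqrt{n} \asymp s_{\betab^*}\log(p\vee n)/\sqrt{n}$, again dominated by (a). This leaves exactly the two surviving rates $\bar s^4 \log(p\vee n)/\sqrt n$ and $s_{\betab^*}^{3/4}(\log(p\vee n))^{3/4}/n^{1/4}$ in the statement.

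For part (v), the assumption $\bar s^{3/4}(\log p)^{3/4}/n^{1/4} = \smallocal(1)$ controls summand (c), and combined with $\bar s^2 (\log p)^{1/4}/n^{1/4} = \smallocal(1)$ (which implies $\bar s^4 \log p/\sqrt n = \smallocal(1)$ once both conditions are enforced consistently) controls summand (a); the remaining terms are weaker. Thus the hypothesis of Theorem \ref{cor:ci} is satisfied, and the coverage result $\PP_{\betab}(\cbb^\top\betab^* \in I_n) = 1-2\alpha$ follows immediately. The main obstacle is purely the bookkeeping in parts (i) and (iv): correctly identifying when the complex $\lambda_j$ expression collapses to $\sqrt{\log p/n}$, and determining which of the four remainder terms dominates; both hinge on the $\bar s^2 (\log p)^{1/4}/n^{1/4} = \smallocal(1)$ scaling that the theorem imposes.
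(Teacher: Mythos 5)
Your proposal is correct and follows exactly the route of the paper's own (one-sentence) proof: verify Condition (I) for the penalized CLAD estimator with $r_n = s_{\betab^*}^{1/2}(\log p/n)^{1/2}$, $d_n = s_{\betab^*}(\log p/n)^{1/2}$, $t = s_{\betab^*}$, and substitute into Lemmas \ref{cor:2}--\ref{lemma4} and Theorems \ref{cor:fixed:ci}--\ref{cor:ci}. Your explicit bookkeeping of why $\lambda_j$ collapses to $C\sqrt{\log p/n}$ under $\bar s^2(\log p)^{1/4}/n^{1/4}=\smallocal(1)$ and which of the four remainder terms dominate is in fact more detailed than what the paper records.
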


\begin{remark}
\rm 
Result (i) suggests that the rates of estimation match those of simple linear model as long as proportion of censored data is not equal 1.  In that sense, our results are also efficient. Moreover, result (ii) implies that the rates of estimation of the variance  are slower by a factor of $s_j^{3/2}$ compared to the least squares method. This is also apparent in the result (iii) where the rate of convergence of the precision matrix is slower by a factor of $s_j^{5/2}$, due to the non-standard dependency issues in the plug-in Lasso estimator \eqref{cor:2}.

Lastly, results (iv) and (v)  suggest that 
the confidence interval $I_n$ is  asymptotically valid and that   the coverage errors  are of the order  of 
$ \mathcal{O}\left(   { s_{\betab^*}^{3/4} \rbr{\log p}^{3/4}   }/{n^{1/4} } \right) $ whenever $\bar s^{3/4} (\log p)^{3/4} / n^{1/4} =\smallocal(1)$. Classical results on inference for left-censored data, with $p \ll n$,  only  imply that the error rates of the confidence interval is $\smallocal_P(1) $; instead, we obtain a precise characterization of the size of the residual term.  Moreover, with $p \gg n$ the rates above match the optimal rates of inference for the absolute deviation loss (see e.g. \cite{Belloni2013Uniform}), indicating that our estimator is asymptotically efficient  in the sense that the censoring asymptotically disappears. However,  we impose  slightly stronger dimensionality restrictions as for fully observed data $\bar s \log p/ \sqrt{n}$ is a sufficient condition. The additional condition $\bar s^{3/4} (\log p)^{3/4} / n^{1/4} =\smallocal(1)$  can be thought of as a penalty to pay for being adaptive to left-censoring.   This implies that a larger sample size needs to be employed for the results to be valid. However, this is not unexpected as censoring typically reduces the  effective sample size. 

 \end{remark}


\section{Mallow's, Schweppe's and Hill-Ryan's Estimators for High-Dimensional  Left-Censored Models}     
Statistical models are seldom believed to be complete
descriptions of how real data are generated; rather, the model is an approximation
that is useful, if it captures essential features of the data. Good robust
methods perform well even if the data deviates from the theoretical distributional assumptions.
The best known example of this behavior is the outlier resistance and transformation
invariance of the median.
Several authors have proposed one-step and k-step estimators to combine
local and global stability, as well as a degree of efficiency under the target
linear model \citep{bickel_1975}.
There have been considerable challenges
in developing good robust methods for more general problems.
To the best of our knowledge, there is no prior work that discusses robust one-step estimators for the case of  left-censored models (for either  high or low dimensions).

     We propose here a family of doubly robust estimators that stabilize estimation in the presence of ``unusual'' design or model error distributions.  Observe that  \eqref{model} rarely follows  distribution with light tail. Namely, model \eqref{model} can be reparametrized as $y_i =z_i(\betab^*) \betab^* + \xi_i$ where, $z_i(\betab^*)=x_i  \ind\{x_i\betab^* + \varepsilon_i \geq 0\} $ and $\xi_i =\varepsilon_i \ind\{x_i\betab^* + \varepsilon_i \geq 0\}$. 
Hence $\xi_i$ will rarely follow light tailed distribution and it is in this regard very important to design estimators that are robust.   We introduce Mallow's, Schweppe's and Hill-Ryan's estimators for left-censored models.
     \subsection{Smoothed Robust Estimating Equations (SREE)}

 In this section we propose  a  doubly robust population system of equations
 \begin{equation}\label{eq:robust}
 \EE[\Psi^r(\betab)]=0
 \end{equation}
 with $\Psi^r=n^{-1} \sum_{i=1}^n \psi_i^r(\betab)$
  and
\begin{equation}\label{score}
 \psi_i^r(\betab) =   - n^{-1} \sum_{i=1}^{n}  {q_{i\mbox{\scriptsize }}}  w_i^\top( \betab) \ \psi\biggl( v_i \bigl(y_i-\max\{0,x_i \betab\}\bigl)  \biggl)  ,
\end{equation}
where $\psi$ is an odd, nondecreasing and bounded function. Throughout we assume that the function $\psi$ either has finitely many jumps or is differentiable with bounded first derivative.
Notice that when $q_i = 1$ and $v_i = 1$, with $\psi$ being the sign function, we have $\psi_i^r = \psi_i$ of previous section. 
Moreover, observe that for the weight functions
 $q_i =q(x_i)$ and $v_i=v(x_i)$, both functions of $\RR^p \rightarrow \RR^+$,
 the true parameter vector $\betab^*$ satisfies the robust population system of equations above.
 Appropriate  weight functions $q$ and $v$ are chosen for particular efficiency considerations. Points which have high leverage are considered ``dangerous'', and should be downweighted by the appropriate choice of the weights $v_i$. Additionally, if the design has ``unusual'' points, the weights $q_i$ serve to downweight their effect in the final estimator.

      We augment the system above similarly as before  and consider     
       the system of equations 
     \begin{equation}
     \EE[\Psi^r(\betab^*)] + {\boldsymbol {\Upsilon}}^r [\betab -\betab^*] = 0,
     \end{equation}
     for a suitable choice of the robust matrix  ${\boldsymbol {\Upsilon}}^r \in \RR^{p \times p}$.
     Ideally, most efficient estimation can be achieved when the matrix   ${\boldsymbol {\Upsilon}}^r $ is close to the influence function of the robust equations \eqref{eq:robust}.

     To avoid difficulties with non-smoothness of $\psi$, we propose to work with a matrix  ${\boldsymbol {\Upsilon}}^r$  that is smooth enough and is robust simultaneously. To that end, 
    observe $\Psi^r(\betab^*) = \Phi ^r(\betab^*, \varepsilon)$ for a suitable function $\Phi ^r= n^{-1} \sum_{i=1}^n \phi_i^r$ and $\phi_i^r: \RR^p \times \RR \to \RR $.  
 We consider a smoothed version of the Hessian matrix and work with     ${\boldsymbol {\Upsilon}}^r = {\boldsymbol {\Upsilon}}^r(\betab^*)$ for     \[
     {\boldsymbol {\Upsilon}}^r(\betab^*)=\EE_X \left[ \nabla_{\betab^*} \int_{-\infty} ^ \infty \Phi^r(\betab^*,\varepsilon)  f_\varepsilon(x)dx \right]
     \]
     where $f_\varepsilon$ denotes the density of the model error \eqref{model}.            
     To infer the parameter $\betab^*$, we adapt a one-step approach in solving the empirical counterpart of the population equations above. We name the empirical equations as   {\it Smoothed Robust Estimating Equations}  or SREE in short.   For a preliminary estimate    we solve an approximation of the robust system of equations above and search for the $\betab$ that solves
      $
     \Psi^r(\hat \betab ) +     {\boldsymbol {\Upsilon}}^r(\hat \betab) (\betab - \hat \betab) = 0.
     $
           
     The particular form of the matrix ${\boldsymbol {\Upsilon}}^r(\betab^*)$ depends on the choice of the weight functions $q$ and $v$ and the function $\psi$. In particular, for the left-censored model \eqref{model}
     \begin{align}\label{psi_subgradient_1}
\nabla_{\betab^*} \EE_\varepsilon[\Psi^r(\betab^*) ] 
&=  n^{-1} \sum_{i=1}^{n} q_i \nabla_{\betab^*} \EE_\varepsilon \left[\psi \left( v_i( y_i- \max\{0,x_i\betab^*\})\right)\right]\ 
\end{align}
leading to the following form
\[
{\boldsymbol {\Upsilon}}^r(\betab^*) =   \EE_X \left[ n^{-1} \sum_{i=1}^{n}{q_i}  v_i  \psi' ( v_i  \varepsilon_i  )x _i ^\top  w_i(\betab^*) \right] 
 \]
 whenever the function $\psi$ is differentiable. Here, we denote $\psi'(xy)$ as $\partial \psi(x y) / \partial y$. In case of non-smooth $\psi$, $\psi '$ should be interpreted as $g'$ for  $g(\varepsilon_i) =  \EE_{\varepsilon}[\psi(v_i \varepsilon_i)]$. For example, if $\psi=\mbox{sign}$ then $g ( \varepsilon_i)$ is equal to $1 - 2P(\varepsilon_i \leq 0)$ and $g' (\varepsilon_i) =  -2 f_{\varepsilon_i}(0)$. 
 

\subsection{Left-censored Mallow's, Hill-Ryan's and  Schweppe's estimator}

Here we provide specific definitions of new robust one-step estimates. We begin by defining a robust estimate of the precision matrix 
  i.e., $\{{\boldsymbol {\Upsilon}}^{r} \}^{-1}(\betab^*) $. We design a robust estimator that preserves the ``downweight'' functions $q$ and $v$ as to stabilize the estimation in the presence of contaminated observations. 
For further analysis, it is useful to define the matrix $\tilde W(\betab) = Q^{1/2} W(\betab)$ and 
$$Q = \mbox{diag}(\mathbf q \circ \db) \in \RR^{n \times n},$$ $\mathbf q  \in \RR^n$ with 
$\mathbf q = \left[
q(x_1), q(x_2), \cdots, q(x_n)
\right]^\top$
and $\mathbf d \in \RR^n$ with 
$$\mathbf d = \begin{bmatrix}
\psi' ( v_1 \hat \varepsilon_1 ), & \psi' ( v_2 \hat \varepsilon_2  ), & \cdots, & \psi' ( v_n  \hat \varepsilon_n  )
\end{bmatrix}^\top$$  for  $\hat \varepsilon_i = y_i-\max\{0,x_i\hat \betab \}$.  When function $\psi$ does not have first derivative, we replace $\psi' ( v_i \hat \varepsilon_i )$ with $n^{-1} \sum_{i=1}^n [\EE \psi ( v_i \hat \varepsilon_i )]'$. With this notation, we have $\tilde W_{j}( {\betab}^*) = Q^{1/2} A(\betab^*)X_{j}$ and ${\boldsymbol {\Upsilon}}^r(\betab^*)= n^{-1}\mathbb{E} \left[ \tilde W(\betab^*)^\top \tilde W(\betab^*) \right]$ takes the form of a weighted covariance matrix. Hence, to estimate the inverse $\{{\boldsymbol {\Upsilon}}^r \}^{-1}(\betab^*)$, we project columns one onto the space spanned by the remaining columns.
For $j = 1, \dots, p$, we define the vector $\tilde{\thetab}_{(j)}(\betab)$ as follows,
\begin{align} \label{step2_model_r}
\tilde{\thetab}_{(j)}(\betab)&= \underset{\thetab \in \RR^{p-1}}{\argmin} \ \EE  \left\| \tilde W_j(\betab) - \tilde W_{-j}(\betab)\thetab \right\|_2^2 /n.
\end{align}
Also, we assume the vector $\tilde{\thetab}_{(j)}(\betab^*)$ is sparse with $\tilde s_j := \| \tilde{\thetab}_{(j)} (\betab^*) \|_0 \leq s_{\Omega}$.  
Thus, we propose the following  as a robust estimate of the scale
\begin{equation}\label{eq:theta_1}
\tilde\Omega_{jj}(\hat{\betab}) =\tilde{\mbox{\scriptsize$\mathcal{J}$}}_j^{-2} ,
 \qquad
\tilde\Omega_{j,-j}(\hat{\betab}) = - \tilde{\mbox{\scriptsize$\mathcal{J}$}}_j^{-2} \tilde{\thetab}_{(j)}(\hat\betab). 
\end{equation}
with 
\begin{align*} 
\tilde{\thetab}_{(j)}(\hat{\betab}) &= \underset{\thetab \in \RR^{p-1}}{\argmin} \left\{n^{-1}\left\| \tilde W_j(\hat{\betab}) - \tilde W_{-j}(\hat{\betab})\thetab \right\|_2^2+2\lambda_j\|\thetab \|_{1} \right\}.
\end{align*}
and the normalizing factor 
\begin{align} 
\tilde{\mbox{\scriptsize$\mathcal{J}$}}_j^2 &=
 n^{-1}\left\|  \tilde W_j(\hat{\betab}) - \tilde W_{-j}(\hat{\betab})\tilde{\thetab}_{(j)}(\hat{\betab}) \right\|_2^2 + \lambda_j\|\tilde{\thetab}_{(j)}(\hat{\betab}) \|_{1} .\nonumber
\end{align}
\begin{remark}
 Estimator \eqref{eq:theta_1} is a high-dimensional extension of Hampel's ideas of  approximating the inverse of the Hessian matrix in a robust way, by allowing data specific weights to trim down the effects of the outliers. Such weights can be  stabilizing estimation in the presence of high proportion of censoring.
   \cite{H77}   compared the efficiency of the
Mallow's and Schweppe's estimators  to several others and found that they dominate in the case of linear models in low-dimensions. 
  \end{remark}
  
Lastly, we arrive at a class of  doubly robust one-step estimators,
\begin{align}\label{beta_r}
\check {\betab} = \hat{\betab} + {\tilde \Omegab(\hat{\betab})} \left(     n^{-1} \sum_{i=1}^{n}  {q_{i\mbox{\scriptsize }}}  w_i^\top(\hat\betab) \ \psi\biggl( v_i \bigl(y_i-\max\{0,x_i\hat \betab\}\bigl)  \biggl) \right).
\end{align}

We  propose a one-step left-censored Mallow's estimator for left-censored high-dimensional regression by setting the weights to be 
$v_i=1$, and
$$q_i=\min \left\{ 1,  {b^{\alpha/2}} \left({  \left(w_{i, \hat S}(\hat \betab) - \bar w_{\hat S} (\hat \betab) \right)^\top  \tilde \Omegab _{\hat S, \hat S}(\hat \betab) \left(w_{i, \hat S}(\hat \betab) - \bar w_{\hat S} (\hat \betab) \right) } \right)^{-\alpha/2} \right\},$$ for  constants $b>0$ and $\alpha\geq 1$, 
and with 
$$\bar w_{  \hat S}(\hat \betab) = n^{-1} \sum_{i=1}^n w_{i, \hat S}(\hat \betab)$$ and $\hat S =\{j: \hat \betab_j \neq 0\}$. 
Extending the work of \cite{CH93}, it is easy to see that    Mallow's 
one-step estimator  with $\alpha=1$  and $b= \chi^2_{\hat s,0.95}$ quantile of chi-squared distribution with $\hat s =|\hat S|$ improves  a breakdown point of the initial estimator to nearly   $0.5$, by providing local stability of the precision matrix estimate.

Similarly, the  one-step  left-censored Hill-Ryan estimator  is defined with $v_i =q_i$ and the one-step left-censored Schweppe's estimator with 
$$v_i =1/  q_i , \qquad \mbox{and} \qquad q_i = 1/ \left\| \tilde \Omegab_{\hat S, \hat S}(\hat \betab) (w_{i, \hat S}(\hat \betab) - \bar w_{\hat S}(\hat \betab))\right\|_2.$$

\subsection{Theoretical Results}
      
      Similar to the concise version of Bahadur representation presented in \eqref{eq:short_bahadur} for the standard one-step estimator with $q_i = 1$ and $v_i = 1$, we also have the expression for doubly robust estimator, 
 \begin{align}\label{eq:bahadur2}
\sqrt{n}&\left(\breve{\betab}-\betab^*\right)   \nonumber
\\
&= \frac{1}{2f(0)}\{\Sigmab^{\mbox{\scriptsize r}}\}^{-1}(\betab^*)\frac{1}{\sqrt{n}}\sum_{i=1}^n q_i \psi\left(  v_i (y_i-\max\{0,x_i {\betab^*}\} ) \right)(w_i({\betab^*}))^\top + \Delta^{\mbox{r}}.
\end{align}

Next, we show that the leading component has asymptotically normal distribution and that the residual term is of smaller order. For simplicity of presentation we present results below with an initial estimator being  penalized CLAD  estimator with the choice of tuning parameter as presented in Theorem \ref{thm:clad_temp}. We  introduce the following condition.

\vskip 5pt

{\bf  Condition (r$\boldsymbol \Gamma$)}:\label{condition_rpc}
{\it  Parameters $\thetab_{(j)}^*(\betab^*) $ for all $j=1,\dots, p$ are bounded, and such that 
$\left|\{k : \theta_{(j),k}^*(\betab^*) \neq 0\}\right| \leq \tilde s_j$ for some $\tilde s_j \leq n$.  Moreover,  $\tilde \eta_j$  are sub-exponential random vectors. Let $q_i$ and $v_i$ be functions such that $\max_i |q_i| \leq M_1$ and $\max_i |v_i| \leq M_2$ for positive constants $M_1$ and $M_2$ and $\EE[\psi(\varepsilon_i v_i)]=0$.  Moreover, let $\psi$ be such that $\psi(z) < \infty$ and $0<\psi'(z) < \infty$. }

\begin{theorem}\label{normality_r}
Assume that  $\bar s^2 \log^{1/4}(p) / n^{1/4} =\smallocal(1)$  , with $\bar s=s_{\betab^*} \vee \tilde s_{\Omega}$ and $\tilde s_{\Omega} = \max_j \tilde s_j$.  Define $U^{\mbox{r}} := \frac{1}{2f(0)}\{\Sigmab^{\mbox{\scriptsize r}}\}^{-1}(\betab^*)\frac{1}{\sqrt{n}}\sum_{i=1}^n q_i \psi\left(  v_i (y_i-\max\{0,x_i {\betab^*}\} ) \right)(w_i({\betab^*}))^\top$.  Let  Conditions {\bf{(X)}}, {\bf{(C)}}, {\bf{(CC)}}, {\bf (r}$\boldsymbol \Gamma${\bf )} and {\bf{(E)}} hold and let    $\lambda_j = C \sqrt{\log p / n}$ for a constant $C >1$. Then,
\begin{align*}
\left[\tilde\Omegab(\hat{\betab}) \hat  { {\boldsymbol {\Upsilon}}^r} (\hat \betab)  \tilde\Omegab (\hat{\betab})\right]_{jj}^{-\frac{1}{2}}U_{j}^{\mbox{r}} \xrightarrow[n,p,s_{\betab^*}\rightarrow\infty]{d} \mathcal{N}\left(0, 1\right).
\end{align*}
\end{theorem}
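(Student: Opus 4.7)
The plan is to mirror the strategy that yielded Theorem \ref{normality}, but adapted to the robust score. I will proceed in four steps: (i) derive a Bahadur representation analogous to \eqref{delta_1} for $\check\betab$; (ii) bound the residual terms using robust analogues of Lemmas \ref{lemma0}--\ref{lemma4}; (iii) apply a Lindeberg--Feller CLT to the leading term $U^{\mathrm r}_j$; (iv) establish consistency of the sandwich variance estimator and conclude by Slutsky.

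First, starting from the definition \eqref{beta_r}, I would add and subtract $n^{-1}\sum_i \psi_i^{\mathrm r}(\betab^*)$ and $\EE[\psi_i^{\mathrm r}(\betab^*)]=0$ to obtain the decomposition
\begin{align*}
\sqrt n(\check\betab-\betab^*) &= \frac{1}{2f(0)}\{\Sigmab^{\mathrm r}\}^{-1}(\betab^*)\frac{1}{\sqrt n}\sum_{i=1}^n q_i\psi(v_i\varepsilon_i)w_i(\betab^*)^\top + I_1^{\mathrm r}+I_2^{\mathrm r}+I_3^{\mathrm r}+I_4^{\mathrm r},
\end{align*}
with $I_k^{\mathrm r}$ the obvious robust analogues of $I_1,\dots,I_4$ in \eqref{delta_1}: $I_1^{\mathrm r}$ captures the gap between $\tilde\Omegab(\hat\betab){\boldsymbol{\Upsilon}}^{\mathrm r}(\betab^*)$ and the identity, $I_2^{\mathrm r}$ comes from the linearization of $\EE[\Psi^{\mathrm r}(\hat\betab)]-\EE[\Psi^{\mathrm r}(\betab^*)]$, $I_3^{\mathrm r}$ comes from replacing $\tilde\Omegab(\hat\betab)$ by $\{\Sigmab^{\mathrm r}\}^{-1}(\betab^*)$ in the leading score, and $I_4^{\mathrm r}$ captures the empirical process fluctuation $\mathbb G_n^{\mathrm r}(\hat\betab)-\mathbb G_n^{\mathrm r}(\betab^*)$.

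Second, I would bound each $I_k^{\mathrm r}$ in $\ell_\infty$. Because $\psi$ is bounded with bounded derivative off a finite jump set and $|q_i|,|v_i|$ are uniformly bounded by Condition (r$\boldsymbol\Gamma$), the robust analogues of Lemmas \ref{lemma0}--\ref{lemma1} go through with essentially the same empirical process arguments: Lemma \ref{lemma1} linearizes $n^{-1}\sum_i\EE[\psi_i^{\mathrm r}(\betab)]$ around $\betab^*$ with slope matrix ${\boldsymbol{\Upsilon}}^{\mathrm r}(\betab^*)=2f(0)\Sigmab^{\mathrm r}(\betab^*)$ using Condition (E); the plug-in nodewise Lasso producing $\tilde\Omegab(\hat\betab)$ can be analyzed exactly as in Lemmas \ref{cor:2}--\ref{lemma2}, since the weighted design $\tilde W(\betab)=Q^{1/2}W(\betab)$ has uniformly bounded entries and the oracle regression coefficient $\tilde\thetab_{(j)}(\betab^*)$ is sparse by Condition (r$\boldsymbol\Gamma$), giving rates of order $s_\Omega^3\sqrt{\log p/n}$. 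The uniform bound for $I_4^{\mathrm r}$ follows the $\epsilon$-net argument of Lemma \ref{lemma4}, and in fact is easier since $\psi$ is bounded. Combining these with the sparsity condition $\bar s^2\log^{1/4} p/n^{1/4}=o(1)$ from the statement and the CLAD rates of Theorem \ref{thm:clad_temp} for $\hat\betab$ yields $I_1^{\mathrm r},I_2^{\mathrm r},I_3^{\mathrm r},I_4^{\mathrm r}=o_P(1)$ coordinate-wise.

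Third, for the leading term, fix $j$ and write
\[
U_j^{\mathrm r}=\frac{1}{2f(0)}\,\frac{1}{\sqrt n}\sum_{i=1}^n\xi_{i,j},\qquad \xi_{i,j}:=q_i\psi(v_i\varepsilon_i)\,[\{\Sigmab^{\mathrm r}\}^{-1}(\betab^*)]_{j,\cdot}\,w_i(\betab^*)^\top.
\]
Since $\varepsilon_i$ is independent of $x_i$ and $\EE[\psi(v_i\varepsilon_i)\mid x_i]=0$ under Condition (r$\boldsymbol\Gamma$), the $\xi_{i,j}$ are i.i.d.\ with mean zero. Condition (X) bounds $\|w_i\|_\infty$, Condition (r$\boldsymbol\Gamma$) bounds $|q_i|,|v_i|$ and $|\psi|$, and Lemma \ref{lemma2} (applied in the robust setting) gives uniform boundedness of $\|[\{\Sigmab^{\mathrm r}\}^{-1}(\betab^*)]_{j,\cdot}\|_1$. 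These bounds verify the Lindeberg condition, and a computation of $\sigma_{n,j}^2:=\Var(\xi_{i,j})=[\{\Sigmab^{\mathrm r}\}^{-1}{\boldsymbol{\Upsilon}}_\psi^{\mathrm r}\{\Sigmab^{\mathrm r}\}^{-1}]_{jj}/\bigl(2f(0)\bigr)^{-2}$ gives asymptotic normality $\sigma_{n,j}^{-1}U_j^{\mathrm r}\Rightarrow\mathcal N(0,1)$.

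Finally, I would show $[\tilde\Omegab(\hat\betab)\,\hat{\boldsymbol{\Upsilon}}^{\mathrm r}(\hat\betab)\,\tilde\Omegab(\hat\betab)]_{jj}\to\sigma_{n,j}^2$ in probability. The precision matrix convergence follows from the robust analogue of Lemma \ref{lemma2} described above, and the plug-in convergence of $\hat{\boldsymbol{\Upsilon}}^{\mathrm r}(\hat\betab)$ follows from a uniform law of large numbers combined with continuity of $\psi$ (or its conditional expectation when $\psi$ has jumps, using the density assumption in Condition (E)). The conclusion then follows from Slutsky's theorem applied to the Bahadur representation.

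The main obstacle is step (ii), specifically the analogue of Lemma \ref{lemma4} when $\psi$ has finitely many jumps: the robust process $\mathbb G_n^{\mathrm r}(\hat\betab)-\mathbb G_n^{\mathrm r}(\betab^*)$ picks up non-smoothness from both $\psi$ and from the censoring indicator $\mathbf 1\{x_i\betab>0\}$ inside $w_i(\betab)$, and for Mallow's/Schweppe's choice the weights $q_i,v_i$ themselves depend on $\tilde\Omegab(\hat\betab)$, producing an extra layer of dependence. Handling this will require (a) conditioning on the preliminary $\hat\betab$ and treating the weights as deterministic in a first pass, (b) localizing to the neighborhood where $\tilde\Omegab(\hat\betab)$ concentrates around $\{\Sigmab^{\mathrm r}\}^{-1}(\betab^*)$, and (c) using the covering-number construction from the proof of Lemma \ref{lemma4} with an enlarged class of functions indexed by both $\betab$ and (localized) weights.
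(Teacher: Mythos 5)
Your proposal follows essentially the same route as the paper: the paper likewise reduces the leading term to $\frac{1}{\sqrt n}\sum_i w_i^\top(\betab^*)R_i^{\mathrm r}$ with $R_i^{\mathrm r}=q_i\psi(-v_i\varepsilon_i)$ bounded and mean-zero (by Condition (r$\boldsymbol\Gamma$) and independence of $\varepsilon_i$ and $x_i$), applies the CLT argument of Theorem \ref{normality}, establishes robust analogues of Lemmas \ref{lemma0}--\ref{lemma4} (replacing $W$ by $\tilde W=Q^{1/2}W$ and handling non-differentiable $\psi$ by splitting it into step functions and working with $\EE_\varepsilon[\psi(v_i\cdot)]$), and concludes via Slutsky. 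You also correctly single out the robust version of Lemma \ref{lemma4} as the delicate step, which is where the paper spends most of its effort.
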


For the residual term we obtain the following statement.

\begin{theorem}\label{cor:fixed:ci:b} 
Let  Conditions {\bf{(X)}}, {\bf{(C)}}, {\bf{(CC)}}, {\bf (r}$\boldsymbol \Gamma${\bf )} and {\bf{(E)}} hold and let    $\lambda_j = C \sqrt{\log p / n}$ for a constant $C >1$.
Assume that  $\bar s^2 \log^{1/4}(p) / n^{1/4} =\smallocal(1)$, for $\bar s = s_{\betab^*} \vee s_{\tilde \Omega}$ with $s_{\tilde \Omega} = \max_j \tilde s_j  $. 
  Let $q_i$ and $v_i$ be functions such that $\max_i |q_i| \leq M_1$ and $\max_i |v_i| \leq M_2$ for positive constants $M_1$ and $M_2$.  Then, 
   \begin{align*}
 \|\Delta^{\mbox{r}} \|_\infty =
 \Ocal_P\left( \frac{\bar s^4 \log (p \vee n)}{n^{1/2}} \bigvee \frac{s_{\betab^*}^{3/4} (\log (p \vee n))^{3/4}}{n^{1/4}}  \right).
 \end{align*}
\end{theorem}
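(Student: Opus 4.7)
The proof plan is to mirror the Bahadur decomposition \eqref{delta_1} derived for the non-robust one-step estimator and then specialize the resulting rates via the CLAD-based quantities of Theorem~\ref{thm:clad_temp}. Starting from the one-step formula \eqref{beta_r} and the identification of the leading term in \eqref{eq:bahadur2}, I would decompose $\Delta^{\mbox{r}} = I_1^{\mbox{r}} + I_2^{\mbox{r}} + I_3^{\mbox{r}} + I_4^{\mbox{r}}$, where each $I_k^{\mbox{r}}$ plays the role of $I_k$ in \eqref{delta_1}: $I_1^{\mbox{r}}$ captures the precision-matrix consistency error $\sqrt{n}(I - \tilde\Omegab(\hat\betab){\boldsymbol {\Upsilon}}^r(\betab^*))(\hat\betab - \betab^*)$; $I_2^{\mbox{r}}$ collects the bias from linearizing the expected score around $\betab^*$; $I_3^{\mbox{r}}$ is the product of the precision-matrix error with the leading stochastic term $n^{-1/2}\sum_i q_i\psi(v_i\varepsilon_i)w_i(\betab^*)^\top$; and $I_4^{\mbox{r}} = (2f(0))^{-1}\tilde\Omegab(\hat\betab)\sqrt{n}[\mathbb{G}_n^r(\hat\betab) - \mathbb{G}_n^r(\betab^*)]$ is the centered empirical-process increment, with $\mathbb{G}_n^r$ denoting the recentered empirical version of $\Psi^r$.

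Three auxiliary ingredients are needed. First, a robust analogue of Lemma~\ref{lemma1} giving $n^{-1}\sum_{i=1}^n\EE\psi_i^r(\betab) = {\boldsymbol {\Upsilon}}^r(\betab^*)(\betab^*-\betab) + \mathcal{O}(\|\betab-\betab^*\|_1)(\betab^*-\betab)$; this follows by differentiating the smoothed integral $\int \Phi^r(\betab,\varepsilon)f_\varepsilon(\varepsilon)d\varepsilon$ in $\betab$, exploiting the uniform boundedness of $\psi$, $q_i$, and $v_i$ granted by Condition~{\bf (r$\boldsymbol\Gamma$)} together with the Lipschitz property of $f_\varepsilon$ in Condition~{\bf(E)}, so that smoothness is transferred to the integrand even when $\psi$ itself is not differentiable. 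Second, a robust version of Lemmas~\ref{cor:2}--\ref{lemma2} applied to the weighted design $\tilde W = Q^{1/2}W$: since $\max_i|q_i|\leq M_1$ and $\max_i|v_i|\leq M_2$, the bounded weights merely rescale the entries without altering the compatibility factor, and the plug-in precision matrix $\tilde\Omegab(\hat\betab)$ inherits the column-wise $\ell_1$ bound $\ocal_P(s_{\tilde\Omega}^3\sqrt{\log(p\vee n)/n})$. Combined with the CLAD rates $d_n = s_{\betab^*}\sqrt{\log p/n}$, $r_n^2 = s_{\betab^*}\log p/n$, and $t = \Ocal_P(s_{\betab^*})$ from \cite{muller_vdgeer_2014}, the contributions of $I_1^{\mbox{r}}$, $I_2^{\mbox{r}}$, and $I_3^{\mbox{r}}$ are controlled by $\Ocal_P(\bar s^4 \log(p\vee n)/\sqrt{n})$.

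The main obstacle is $I_4^{\mbox{r}}$, which requires a uniform bound on the multiplier empirical process built from the non-smooth score $q_i w_i(\betab)^\top\psi(v_i(y_i - \max\{0, x_i\betab\}))$ over sparse perturbations of $\betab^*$. I would adapt the two-step $\epsilon$-net argument of Lemma~\ref{lemma4}: first obtain a pointwise $\ell_\infty$ bound for fixed $\deltab\in\Ccal(r_n,t)$ via Bernstein's inequality, which is feasible because $\psi$, $q_i$, and $v_i$ are uniformly bounded; then extend to uniformity by constructing a covering of $\Ccal(r_n,t)$ that separately handles the discontinuity of $w_i(\cdot)=x_i\ind\{x_i\betab>0\}$ and the possible jumps of $\psi$, using the small-ball inequality $G_i(z,\betab,r)\leq K_1 z$ of Condition~{\bf(E)} to control the probability that each perturbation lands near either discontinuity. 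This reproduces the bound of Lemma~\ref{lemma4} for the robust process, contributing $\Ocal_P(s_{\betab^*}^{3/4}(\log(p\vee n))^{3/4}/n^{1/4})$ to $\|\Delta^{\mbox{r}}\|_\infty$ after substituting the CLAD rates. Taking the maximum of the four contributions produces the claimed bound.
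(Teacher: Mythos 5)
Your proposal is correct and follows essentially the same route as the paper: the paper likewise establishes robust analogues of Lemmas 1--6 (linearization via the mean value theorem with step-function decomposition for non-differentiable $\psi$, the precision-matrix lemmas for the weighted design $\tilde W = Q^{1/2}W$ with constants rescaled by $M_1 M_2$, and the $\epsilon$-net/Bernstein bound for the robust empirical-process increment) and then bounds the terms of the Bahadur decomposition exactly as in Theorem \ref{cor:fixed:ci}. The only substantive content the paper adds beyond your outline is the explicit term-by-term treatment of the robust empirical process (the $T_{211}^{r}, T_{212}^{r}, I_{11}$--$I_{13}$ decomposition), which your covering-number sketch correctly anticipates.
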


 \begin{remark}
 \rm
The estimation procedure described above is based on the initial estimator $\hat\betab$  taken to be penalized CLAD. 
However, it is possible to show that a large family of  sparsity encouraging estimator suffices. In particular, suppose that the initial estimator $\bar \betab$ is such that 
$\| \bar \betab - \betab^*\|_1 \leq \delta_n$ and let for simplicity $s_{\betab^*}=s$. Then results of Theorem \ref{cor:fixed:ci:b} extend to hold for  the confidence interval  defined as 
$
\bar{I}_n =  ( \cbb^\top \tilde\betab  -a_n,\cbb^\top  \tilde\betab +a_n  )
$
with  $a_n$ as in \eqref{eq:pi_1}. In particular, the error rates are of the order of  
\[
  (1 \vee \delta_n \sqrt{ n} ) s_\Omega^3 \lambda_j  + \delta_n^2 \sqrt{ n}  +  \delta_n^{1/2} \sqrt{s }  \sqrt{\log (p \vee n)}  +  s  \log (p \vee n) / \sqrt{n}   
\]
 
  When $s=\ocal(1)$ and $s_j=\ocal(1)$,  and all $\sqrt{n} \lambda_j = \ocal(1)$, previous result implies   that the initial estimator need only to converge at a rate of  $\smallocal(n^{-1/4 - \epsilon})$ for a small $\epsilon >0$.
 \end{remark}  
  
With the   results above, we can now construct a $(1-2\alpha) 100\%$ confidence interval for  $\cbb^\top \betab $ of the form 
\begin{equation}\label{eq:ci_1}
\mbox{ I}_{n}^{\mbox{\small r}}= \biggl( \cbb^\top \breve\betab  - \breve a_n,\cbb^\top  \breve\betab +  \breve a_n \biggl),
\end{equation}
where $\breve \betab $ is defined in   \eqref{beta_r}, $\mathbf{c} = \mathbf e_j$ for some $j \in \{1,2,\dots, p\}$
\begin{equation}\label{eq:pi_1}
 {\breve{a}_n} =  z_{\alpha} \sqrt{  \cbb^\top  \tilde \Omegab (\hat\betab)    \hat  { {\boldsymbol {\Upsilon}}^r} (\hat \betab) \tilde \Omegab(\hat\betab) \cbb}  \Bigl/ \Bigl.\sqrt{n}
\end{equation}
and
\[
  \hat  { {\boldsymbol {\Upsilon}}^r} (\hat \betab)=   n^{-1} \sum_{i=1}^{n}{q_i}  v_i  \psi' ( v_i   (y_i - x_i^\top \hat \betab) )x _i ^\top  w_i(\hat \betab)  .
 \] 
 
   \begin{remark}
 \rm
 Constants $M_1$ and $M_2$  change with  a   choice of the robust estimator. For the Mallow's  and Hill-Ryan's, by Lemma \ref{lemma2},
   \begin{align*}
  \left(w_{i,\hat S}(\hat \betab) - \bar w_{\hat S} (\hat \betab)\right)^\top  \tilde \Omegab_{\hat S, \hat S}(\hat \betab) \left(w_{i,\hat S}(\hat \betab) - \bar w_{\hat S} (\hat \betab) \right)
 >  C \left\|w_{i,\hat S}(\hat \betab) - \bar w_{\hat S} (\hat \betab) \right\|_2^2 \geq 0.
 \end{align*}
 Thus, the  coverage probability of Mallow's  and Hill-Ryan's estimator is the same as that of the M-estimator. 
 
 However, the  coverage of the Schweppe's estimator   is slightly slower, as  result of Lemma \ref{lemma0} and Lemma  \ref{lemma2} imply
   \begin{align*}
&   \left(w_{i,\hat S}(\hat \betab) - \bar w_{\hat S} (\hat \betab)\right)^\top  \tilde \Omegab_{\hat S, \hat S}(\hat \betab) \left(w_{i,\hat S}(\hat \betab) - \bar w_{\hat S} (\hat \betab) \right)\\
& \qquad \leq    \left(w_{i,\hat S}(\hat \betab) - \bar w_{\hat S} (\hat \betab)\right)^\top    \Sigmab^{-1}(  \betab^*) \left(w_{i,\hat S}(\hat \betab) - \bar w_{\hat S} (\hat \betab) \right)+ \smallocal_P(1)  
\\ & \qquad  
\leq \left\| x_{i, \hat S} \right\|_2^2 \  / \lambda_{\min } \left(  \Sigmab ( \betab^*)  \right) = \Ocal(s_{\betab^*}).
 \end{align*}
 Together with Theorem \ref{cor:fixed:ci}, part (b), we observe now a rate that is slower by a factor of $s_{\betab^*}$, i.e., the leading term is  of the order of $\Ocal\left(  {s_{\betab^*}^{7/4} (\log (p \vee n))^{3/4} }{n^{-1/4}}\right)$.
 \end{remark}

 The statements of Theorem \ref{cor:fixed:ci:b} also hold in
a uniform sense.   
\begin{theorem}\label{cor:uniform:ci:b} 
  Under Conditions of Theorems \ref{normality_r} and \ref{cor:fixed:ci:b},    we have  
for  Mallow's and Hill-Ryan's estimator
  \begin{align*}  
     \|\Delta^{\mbox{r}} \|_\infty = 
 \ocal_P\left(  \frac{s_{\betab^*}^{3/4} (\log (p \vee n))^{3/4} }{n^{1/4}}\bigvee \frac{\bar s^4\sqrt{\log (p\vee n)} }{  n^{1/2}}  \right),
  \end{align*}
  whereas for the  Schweppe's estimator  
  \begin{align*}  
  \|\Delta^{\mbox{r}} \|_\infty =  \ocal_P\left(  \frac{s_{\betab^*}^{7/4} (\log (p \vee n))^{3/4} }{n^{1/4}}\bigvee \frac{\bar s^6\sqrt{\log (p\vee n)} }{  n^{1/2}}   \right).
  \end{align*}
\end{theorem}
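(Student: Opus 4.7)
The plan is to adapt the decomposition underlying Theorem \ref{cor:fixed:ci:b} to each of the three weight families and track precisely where the weights $q_i, v_i$ enter.  Starting from \eqref{eq:bahadur2}, I would write $\Delta^{\mbox{r}} = I_1^{\mbox{r}}+I_2^{\mbox{r}}+I_3^{\mbox{r}}+I_4^{\mbox{r}}$, mirroring \eqref{delta_1} with $\Sigmab,\Omegab,\psi_i$ replaced by ${\boldsymbol{\Upsilon}}^r,\tilde\Omegab,\psi_i^r$.  Because the robust score $\psi_i^r$ differs from $\psi_i$ only through the bounded multipliers $q_i$ and $v_i\psi'(v_i\varepsilon_i)$, the robust analogues of Lemmas \ref{lemma0}--\ref{lemma4} transfer verbatim with every constant enlarged by at most a factor of $M_1 M_2\cdot\sup_z|\psi'(z)|$, all finite under Condition (r$\boldsymbol\Gamma$).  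In particular, the linearisation of $\EE[\Psi^r(\betab)]$ follows by the same Taylor argument as Lemma \ref{lemma1} applied to $\EE_\varepsilon[\psi(v_i(y_i-\max\{0,x_i\betab\}))]$, and the precision-matrix bounds of Lemmas \ref{cor:2}--\ref{lemma2} apply to $\tilde W=Q^{1/2}W$ because $Q$ is diagonal with bounded entries, so Condition \textbf{(CC)} is preserved up to an $M_1 M_2$ constant.

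For Mallow's ($v_i=1$) and Hill-Ryan's ($v_i=q_i$), the construction enforces $q_i\le 1$ through the $\min\{1,\cdot\}$ truncation, hence $M_1,M_2 = \Ocal(1)$.  The rate of $\Delta^{\mbox{r}}$ therefore matches that of the non-robust Theorem \ref{cor:fixed:ci:b}, and the improvement from $\Ocal_P(\log (p\vee n)/n^{1/2})$ to $\ocal_P(\sqrt{\log (p\vee n)}/n^{1/2})$ in the higher-order term comes from exploiting the chaining bound in Lemma \ref{lemma4} under the assumption $\bar s^2\log^{1/4}(p)/n^{1/4}=\smallocal(1)$: this sparsity restriction buys an extra $\log^{1/2}$ factor via the covering number of $\Ccal(r_n,t)$ and allows the residual to be absorbed into $\ocal_P$ rather than $\Ocal_P$.

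For Schweppe's estimator, the weight $v_i = 1/q_i = \|\tilde\Omegab_{\hat S,\hat S}^{1/2}(\hat\betab)(w_{i,\hat S}(\hat\betab)-\bar w_{\hat S}(\hat\betab))\|_2$ is data-dependent.  The chain of inequalities in the remark immediately preceding the theorem, obtained by combining Lemmas \ref{lemma0} and \ref{lemma2}, already yields $v_i = \Ocal(\sqrt{s_{\betab^*}})$ so that $M_2 = \Ocal(\sqrt{s_{\betab^*}})$.  Substituting this bound into the general expression for $\Delta^{\mbox{r}}$ inflates the leading-order term by a factor of $s_{\betab^*}$ (turning $s_{\betab^*}^{3/4}$ into $s_{\betab^*}^{7/4}$) and the higher-order term by a factor of $\bar s^2$ (turning $\bar s^4$ into $\bar s^6$), matching the stated rate exactly.

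The main obstacle is the Schweppe case: verifying that the bound on $v_i$ holds uniformly in $i=1,\dots,n$ on an event of asymptotically unit probability, because $v_i$ depends on the same data used to build $\hat\betab$ and $\tilde\Omegab(\hat\betab)$.  I would address this by first conditioning on the high-probability event on which the $\ell_1$ consistency of $\tilde\Omegab(\hat\betab)$ from Lemma \ref{lemma2} holds, and then applying a union bound over $i=1,\dots,n$ using the sub-exponential tail control supplied by Condition (r$\boldsymbol\Gamma$) to pass from the population quantity $\|x_{i,\hat S}\|_2/\sqrt{\lambda_{\min}(\Sigmab(\betab^*))}$ to its empirical analogue uniformly in $i$.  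Once this event is in place, uniformity over $\betab^*\in\mathcal{B}$ is automatic because every invoked lemma already holds uniformly on the sparsity class $\Ccal(r_n,t)$ with the $\bar s$-dependence of $\mathcal{B}$ absorbed into the stated rates.
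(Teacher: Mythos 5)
Your proposal follows essentially the same route as the paper: the paper likewise establishes robust analogues of Lemmas \ref{lemma0}--\ref{lemma4} with all constants inflated by $M_1M_2\sup_z|\psi'(z)|$, uses $q_i\le 1$ (from the $\min\{1,\cdot\}$ truncation) to conclude that Mallow's and Hill-Ryan's inherit the non-robust rate, and uses the bound $(w_{i,\hat S}-\bar w_{\hat S})^\top\tilde\Omegab_{\hat S,\hat S}(w_{i,\hat S}-\bar w_{\hat S})=\Ocal(s_{\betab^*})$ from Lemmas \ref{lemma0} and \ref{lemma2} to obtain the extra $s_{\betab^*}$ and $\bar s^2$ factors for Schweppe's estimator, exactly as you do. Your justification of the improvement from $\log(p\vee n)$ to $\sqrt{\log(p\vee n)}$ in the higher-order term via the covering-number argument is the one hand-wavy step, but the paper's own proof is equally terse on this point (it simply defers to the proof of Theorem \ref{cor:fixed:ci:b} and the arguments of Remark \ref{remark4}), so this does not constitute a gap relative to the paper.
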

%
%
%

 \begin{remark}
 \rm 
This result implies that the residual term sizes depend on  the  type of weight functions  chosen.  Due to the particular left-censoring, the ideal weights  measuring concentration in the error or design depend on the unknown censoring. Hence, we approximate these ideal weights with a plug-in estimators, and therefore obtain rates of convergence that are slightly slower than those of non-robust estimators. This implies that the robust confidence intervals require larger sample size to achieve the nominal level.
 \end{remark}

\begin{corollary}\label{cor:fixed:ci:b:1} 
 Under Conditions of Theorem \ref{normality_r} and \ref{cor:fixed:ci:b},   for all vectors $\cbb=\eb_j$ and any $j \in \{ 1,\dots, p\}$, when $\bar s, n,p \to \infty$  and all $\alpha \in (0,1)$ we have that
   (i) whenever the interval is constructed using Mallow's or Hill-Ryan's estimator and  ${s_{\betab^*}^{3/4} (\log (p \vee n))^{3/4} }/{n^{1/4}}=o(1)$, the respective confidence intervals have asymptotic coverage $1-\alpha$;
 (ii) 
   whenever the interval is constructed using Schweppe's estimator and   ${s_{\betab^*}^{7/4} (\log (p \vee n))^{3/4} }/{n^{1/4}}=o(1)$, the 
    respective confidence intervals have asymptotic coverage of $1-\alpha$.
 \end{corollary}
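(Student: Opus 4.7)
The plan is to derive the corollary by assembling three earlier inputs: the Bahadur representation \eqref{eq:bahadur2} for $\breve\betab$, the coordinatewise asymptotic normality in Theorem \ref{normality_r}, and the uniform remainder bound in Theorem \ref{cor:uniform:ci:b}. Slutsky's theorem glues them together into a normal limit for the studentized statistic, and the coverage statement then follows from the definition of $\breve a_n$ in \eqref{eq:pi_1} and the standard normal quantile argument.

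Concretely, for $\cbb=\eb_j$ I would read off from \eqref{eq:bahadur2}
\[
\sqrt{n}\,(\breve\beta_j - \beta_j^*) \;=\; U_j^{\mbox{r}} \;+\; \Delta_j^{\mbox{r}},
\]
divide both sides by $\big[\tilde\Omegab(\hat\betab)\,\hat{\boldsymbol{\Upsilon}}^r(\hat\betab)\,\tilde\Omegab(\hat\betab)\big]_{jj}^{1/2}$, and handle the two pieces separately. Theorem \ref{normality_r} directly gives that the rescaled leading term tends to $\mathcal{N}(0,1)$. For the remainder, Theorem \ref{cor:uniform:ci:b} yields $\|\Delta^{\mbox{r}}\|_\infty = \smallocal_P\big(s_{\betab^*}^{3/4}(\log(p\vee n))^{3/4}/n^{1/4} \vee \bar s^{4}\sqrt{\log(p\vee n)}/\sqrt{n}\big)$ in the Mallow's/Hill--Ryan's case, and the analogous rate with $s_{\betab^*}^{7/4}$ in place of $s_{\betab^*}^{3/4}$ for Schweppe's. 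The sparsity assumption imposed in part (i) makes the first term $\smallocal(1)$, while the condition $\bar s^2 \log^{1/4}p /n^{1/4}=\smallocal(1)$ inherited from Theorem \ref{normality_r}, squared, forces $\bar s^4 \sqrt{\log p}/\sqrt{n}=\smallocal(1)$, so $\Delta_j^{\mbox{r}} = \smallocal_P(1)$. Part (ii) is identical modulo the strengthened Schweppe-specific sparsity assumption.

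Next, I would check that the studentized scale $[\tilde\Omegab\,\hat{\boldsymbol{\Upsilon}}^r\,\tilde\Omegab]_{jj}$ is bounded above and away from zero with probability tending to one, so that dividing the remainder by its square root preserves the $\smallocal_P(1)$ order. This follows by combining the robust analogue of Lemma \ref{lemma2} applied to $\tilde{\mbox{\scriptsize$\mathcal{J}$}}_j^{-2}$ and $\tilde{\thetab}_{(j)}(\hat\betab)$ under condition {\bf(r$\Gammab$)}, the eigenvalue lower bound on $\Sigmab(\betab^*)$ in condition {\bf(CC)}, and the boundedness of $q_i$, $v_i$ and $\psi'$ provided by condition {\bf(r$\Gammab$)}. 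Slutsky's theorem then gives
\[
\bigl[\tilde\Omegab(\hat\betab)\,\hat{\boldsymbol{\Upsilon}}^r(\hat\betab)\,\tilde\Omegab(\hat\betab)\bigr]_{jj}^{-1/2}\sqrt{n}\,(\breve\beta_j - \beta_j^*) \xrightarrow[n,p,\bar s \to\infty]{d} \mathcal{N}(0,1),
\]
and the advertised coverage follows from $\breve a_n = z_\alpha\sqrt{[\tilde\Omegab\hat{\boldsymbol{\Upsilon}}^r\tilde\Omegab]_{jj}/n}$.

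The main obstacle is the control of the plug-in scale $[\tilde\Omegab\,\hat{\boldsymbol{\Upsilon}}^r\,\tilde\Omegab]_{jj}$: both factors involve data-dependent weights $q_i$, $v_i$, and $\psi'(v_i\hat\varepsilon_i)$ that in turn depend on the initial estimator $\hat\betab$, on the rows of $X$ through the downweighting functions, and on residuals through a possibly non-smooth $\psi$. The Schweppe weights, which behave like $1/\|\tilde\Omegab^{1/2}(w_i-\bar w)\|_2$, are particularly delicate and are precisely the reason the remainder bound---and hence the sparsity assumption---picks up an extra factor of $s_{\betab^*}$ in part (ii) compared to part (i). Once concentration of these weighted scales is in place, the rest of the argument is a routine Slutsky/continuous-mapping assembly applied coordinatewise.
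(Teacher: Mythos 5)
Your proposal is correct and follows essentially the route the paper intends: the corollary is the immediate Wald/Slutsky assembly of the Bahadur representation \eqref{eq:bahadur2}, the normality of $U^{\mbox{r}}$ from Theorem \ref{normality_r}, and the remainder bounds of Theorems \ref{cor:fixed:ci:b} and \ref{cor:uniform:ci:b}, exactly mirroring how the paper derives Theorem \ref{cor:ci} from Corollary \ref{cor1} and Theorem \ref{cor:fixed:ci}. Your additional care about the plug-in scale $[\tilde\Omegab\,\hat{{\boldsymbol {\Upsilon}}}^r\,\tilde\Omegab]_{jj}$ being bounded away from zero is the right (and only nontrivial) checkpoint, and it is handled the same way as in the non-robust case.
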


     \section{Numerical Results}
     
In this section, we present a number of numerical experiments from both high-dimensional, $p \gg n$, and  low-dimensional, $p \ll n$, simulated  settings.

We  implemented  the proposed estimator  in a number of different model settings. Specifically, we vary the following parameters of the model. The number of observations, $n$, is  taken to be  $300$, while  $p$, the number of parameters, is taken to be $40$ or $400$. The error of the model, $\varepsilon$, is generated from a number of  distributions including: standard normal, Student's $t$ with $4$ degrees of freedom, Beta distribution with parameters $(2,3)$ and Weibull distribution with parameters $(1/2,1/5)$. In the case of the non-zero mean distributions, we center the observations before generating the model. 
The parameter $s_{\betab^*}$, the sparsity of $\betab^*$, $\#\{j:\betab^*_j=0\}$, is taken to be 3, with all signal parameters taken to be $1$ and located as the first three coordinates.  The $n\times p$ design matrix, $X$,  is generated from a multivariate Normal distribution $\mathcal{N}\left(\mu,\Sigmab\right)$. The mean $\mu$ is chosen to be vector of zero, and the censoring level $c$ is chosen to fix censoring proportion at $25\%$. The covariance matrix, $\Sigmab$,  of the distribution that $X$  follows, is taken to be the identity matrix or the Toeplitz matrix  such that $\Sigmab_{ij} =\rho^{|i-j|}$ for $\rho=0.4$. In each case, we generated 100 samples from one of the settings described above  and for each sample we calculated the 95\%  confidence interval obtained by using the algorithm described in Steps 1-4 below. We also note that the optimization problem required to obtain the CLAD estimator is not convex. Linear programming techniques used to obtain the solution is described in the following,
\begin{align*}
& \underset{\substack{\ub^+, \ub^- \geq 0 \\ \vb^+, \vb^- \geq 0 \\ \betab^+, \betab^- \geq 0}}{\text{minimize}}
& & \left\{ n^{-1} \sum_{i=1}^n \left(\ub_i^+ + \ub_i^-\right) + \lambda \sum_{j=1}^p \left(\betab_j^+ + \betab_j^-\right)\right\} \\
& \text{subject to} 
& & \ub_i^+ - \ub_i^- = y_i - \vb_i^+, \text{ for $1 \leq i \leq n$} \\
&&& \vb_i^+ - \vb_i^- = \sum_{j=1}^p X_{ij}\left(\betab_j^+ - \betab_j^-\right), \text{ for $1 \leq i \leq n$}.
\end{align*}

\begin{enumerate}
\item The penalization factor $\lambda$ is chosen by  the one-standard deviation rule of the cross validation,
$\hat \lambda = \arg\min_{ \lambda \in \{ \lambda^1,\dots, \lambda^m\}} \mbox{CV} (\lambda).$
 We move $\lambda$ in the direction of  decreasing regularization until it
ceases to be true that
$\mbox{CV} (\lambda) \leq  \mbox{CV} (\hat \lambda)+ \mbox{SE}(\hat \lambda)$.
Standard error for the cross-validation curve, $\mbox{SE}(\hat \lambda)$, is defined as a sample standard error of the  $K$ fold cross-validation statistics $\mbox{CV}_1(\lambda), \dots, \mbox{CV}_K(\lambda)$. They are calibrated using the censored LAD loss as 
\[
\mbox{CV}_k(\lambda) = n_k^{-1} \sum_{i \in F_k} \left|y_i - \max\{ 0, x_i \hat \betab^{-k} (\lambda)\} \right|,
\]
with $\hat \betab^{-k} (\lambda)$ denoting the CLAD estimator computed on all but the $k$-th fold of the data. 

\item The tuning parameter $\lambda_j$ in each penalized $l_2$ regression, is chosen by the one standard deviation rule (as described above). In more details, $\lambda_j$ is 
 in the direction of  decreasing regularization until it
ceases to be true that
$\mbox{CV}^j (\lambda_j) \leq  \mbox{CV}^j (\hat \lambda_j)+ \mbox{SE}^j(\hat \lambda_j)$
for $\hat \lambda_j$ as the cross-validation parameter value. The cross-validation statistic
is here defined as 
\[
\mbox{CV}_k^j(\lambda) = n_k^{-1} \sum_{i \in F_k} \left(W_{ij}(\hat \betab) - W_{ij}(\hat\betab) \hat \gamma^{-k}_{(j)} (\lambda_j)\right)^2,
\]
with $\hat \gamma^{-k}_j (\lambda_j)$ denoting estimators \eqref{nodewise_lasso} computed on all but the $k$-th fold of the data. This choice leads to the conservative confidence intervals with wider than the optimal length.  Theoretically guided optimal choice is highly complicated and depends on both design distribution and censoring level concurrently. Nevertheless,  we show that  one-standard deviation choice is  very reasonable. 
  
  \item Whenever the density of the error term is unknown, we estimate $f(0)$,  using  the proposed  estimator \eqref{eq:density}, with a constant  $c=10$.
We compute the above estimator by splitting the sample into two parts: the first sample is used for computing $\hat \betab$ and $\tilde \betab$ and the other sample is  to compute the estimate $\hat f(0)$.   Optimal value of $h$ is of special independent interest; however, it is not the main objective  of this work.
  
  \item Obtain $\tilde{\betab}$ by plugging $\Omegab(\hat{\betab})$ and $\hat f(0)$ into \eqref{beta_os} with $\lambda$ and $\lambda_j$ as specified in the steps above.
\end{enumerate}

\subsection{Finite Sample Comparisons}\label{Finite_Sample_Comparisons}

 
The summary of the results is presented across dimensionality of the parameter vector. The
{\it Low-Dimensional Regime} are summarized in Table \ref{tab:lowdim} and Figures \ref{fig:1} and \ref{fig:2},
whereas the {\it High-Dimensional Regime} are summarized in Table \ref{tab:highdim} and  Figures \ref{fig:3} and \ref{fig:4}.
   We report average coverage probability across the signal and noise variables independently, as the signal variables are more difficult to cover when compared to the noise variables.
   
We consider a number of challenging settings. Specifically, the censoring proportion is kept relatively high at $25\%$, and our parameter space is large with $p=400$ and $n=300$. In addition, we consider the case of error distribution being Student with $4$ degrees of freedom, which is notoriously difficult to deal with in left-censored problems. In Figures \ref{fig:3} and \ref{fig:4}, we illustrate boxplots of the width of the $95\%$ level confidence intervals across  the simulated repetitions. We showcase  the signal  and the noise variables separately. Table \ref{tab:lowdim} and \ref{tab:highdim} summarize average coverage probabilities of the constructed $95\%$ level confidence intervals for both low-dimensional and high-dimensional regime respectively. For the four error distributions, the observed coverage probabilities are approximately
the same. However, we observe that our method is not insensitive to the heavy-tailed distributions (Student's $t_4$), due to the large bias of the initial estimator. This bias results in larger interval widths especially in the signal variables. Nevertheless, the coverage probability is not affected.

  The biggest advantage of our method is most clearly seen   when the errors are
asymmetric   (Beta and Weibull). In this case, our method has smaller interval width and smaller variance.  Symmetric distributions are very difficult to handle in left-censored models. However, when errors were symmetric (Normal), the coverage probabilities were extremely close to the nominal ones. The above  cases evidently show  that  our method is robust to asymmetric distributions   and does not lose efficiency when the errors are symmetric.


\begin{figure}[h]
\centering
\caption{Comparative boxplots of the average Interval length of Signal (left) and Noise (right) variables. Case of  $p \ll n$ and Toeplitz Design with $\rho=0.4$. }
\label{fig:1}
\includegraphics[width=7cm,height=5cm,keepaspectratio]{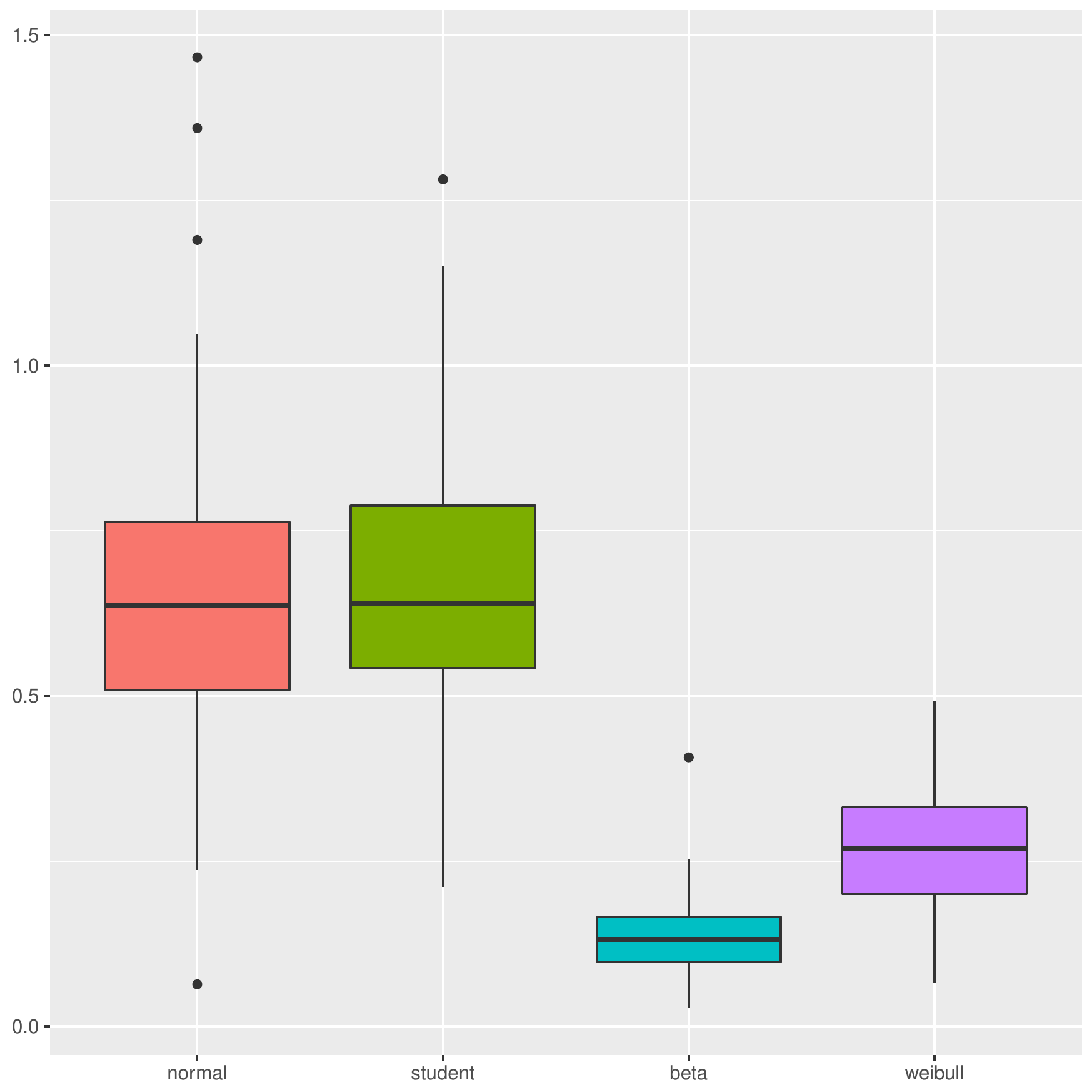}
\includegraphics[width=7cm,height=5cm,keepaspectratio]{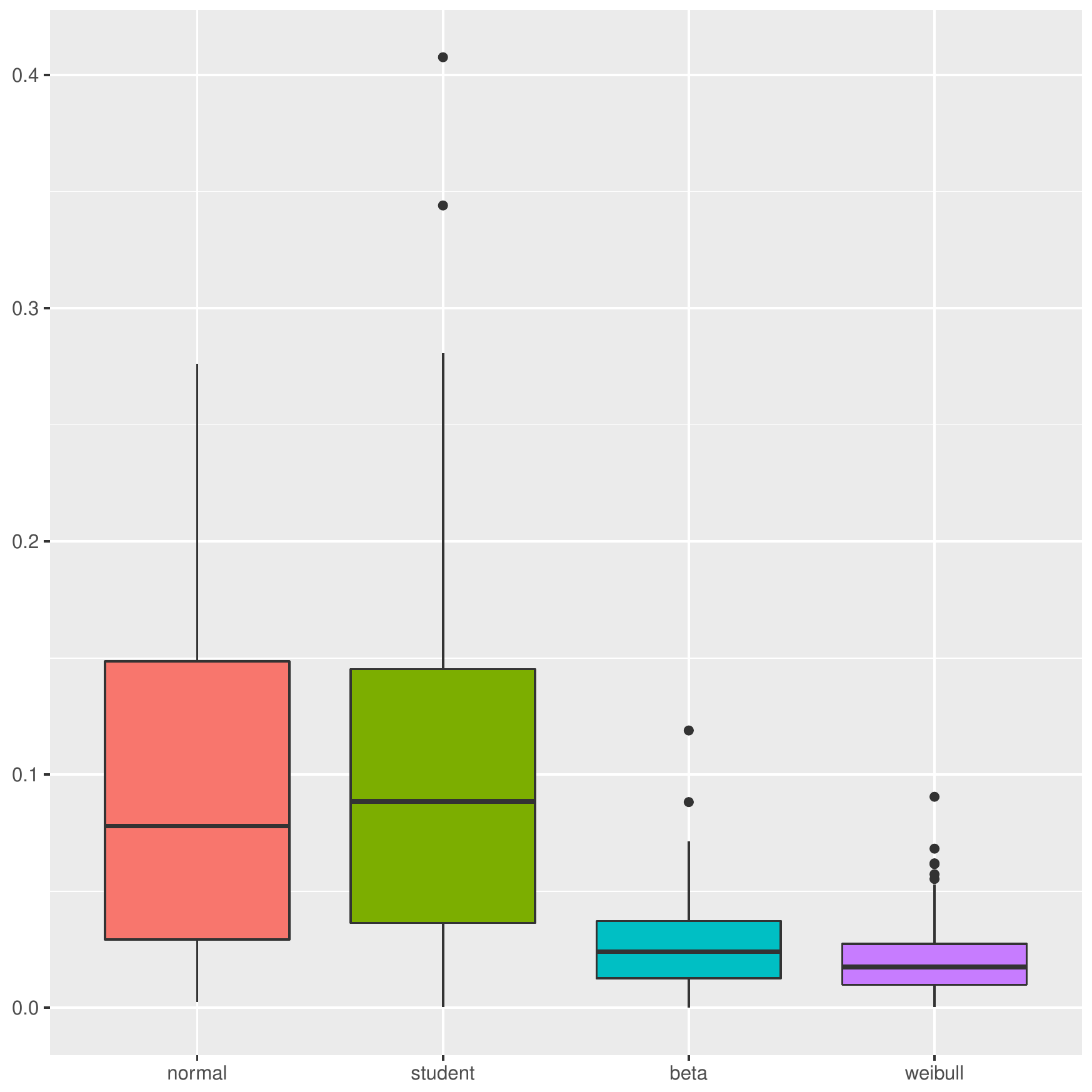}
\end{figure}

\begin{figure}[h]
\centering
\caption{Comparative boxplots of the average Interval length of Signal (left) and Noise (right) variables. Case of  $p \ll n$ and Identity Design with $\rho=0.4$. }
\label{fig:2}
\includegraphics[width=7cm,height=5cm,keepaspectratio]{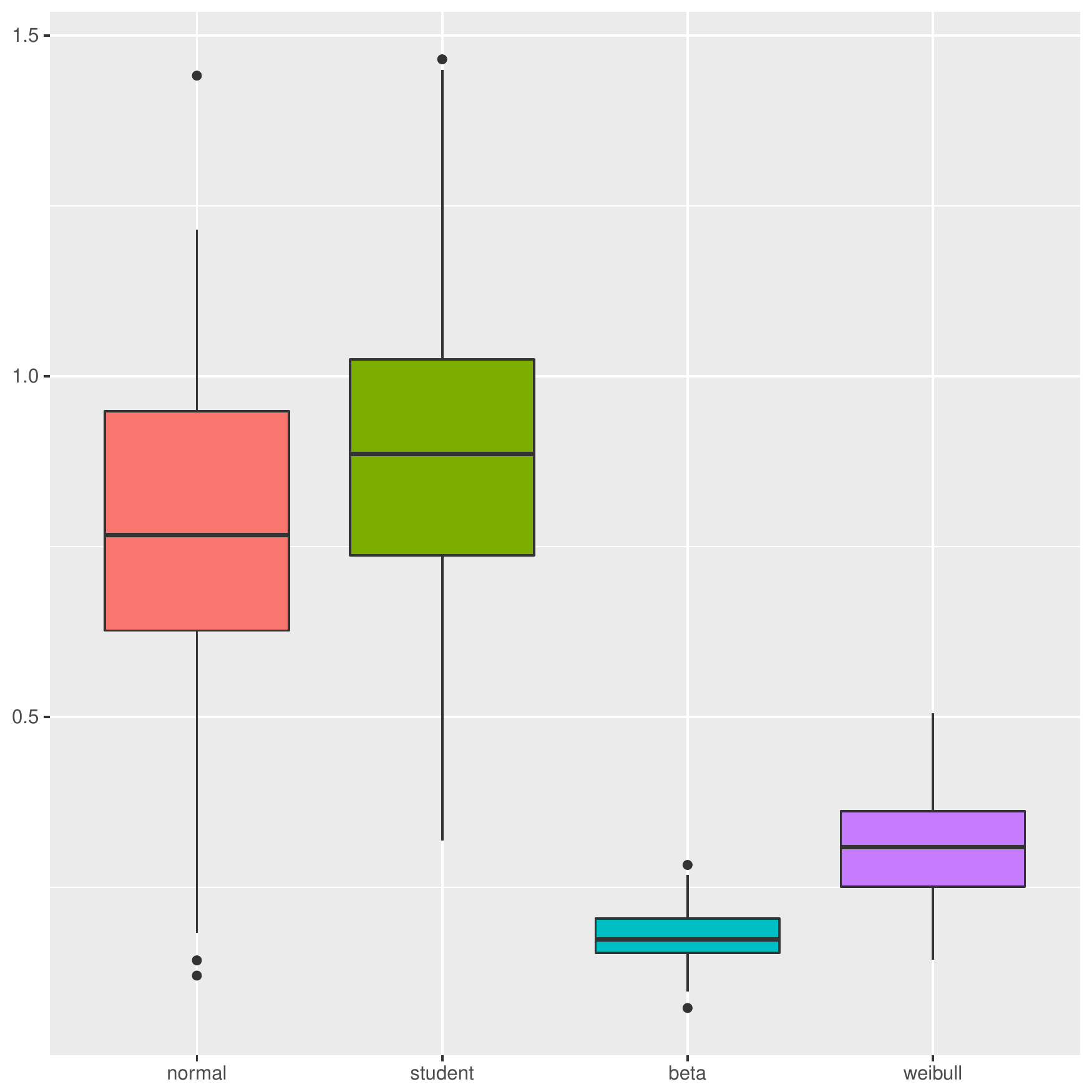}
\includegraphics[width=7cm,height=5cm,keepaspectratio]{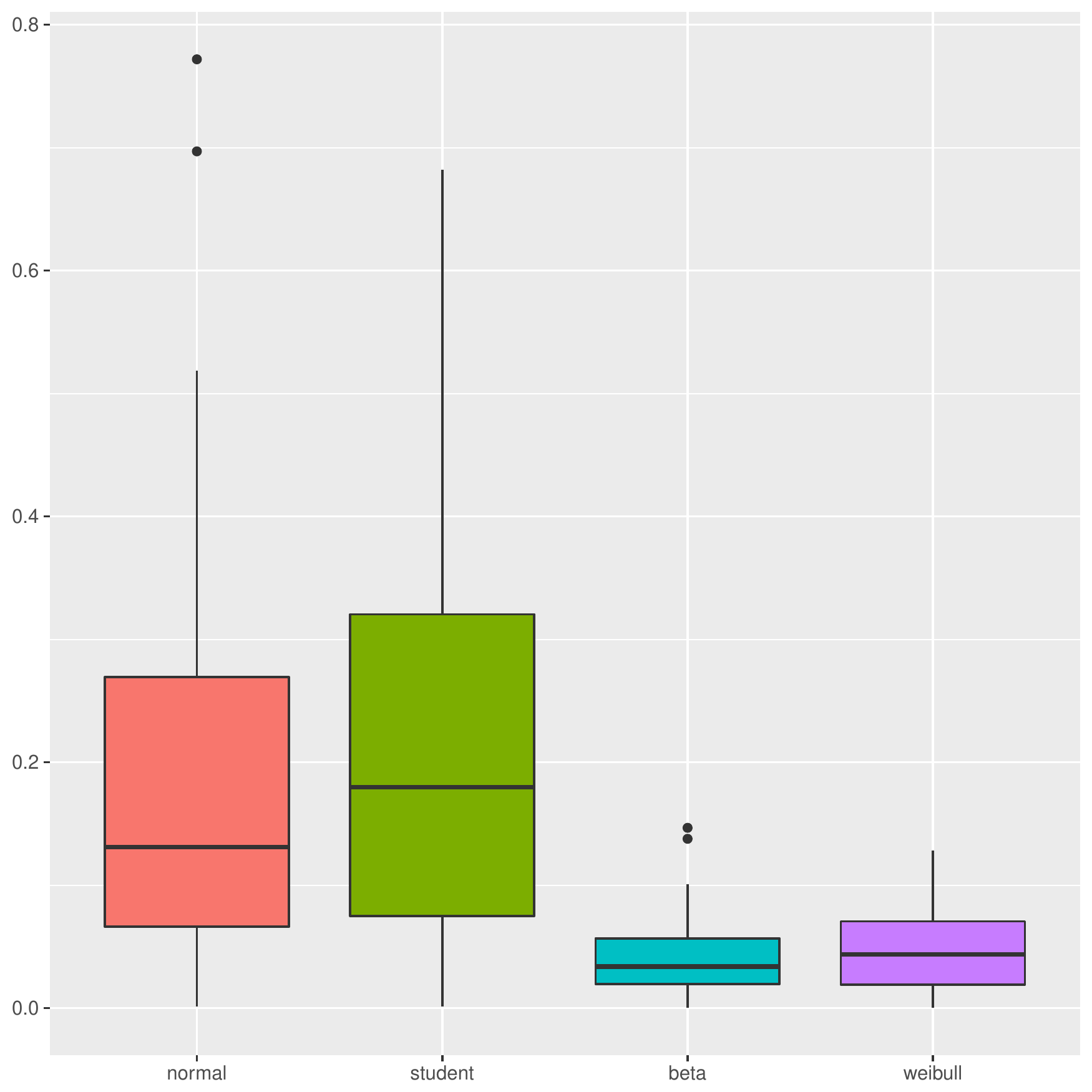}
\end{figure}

\begin{figure}[h]
\centering
\caption{Comparative boxplots of the average Interval length of Signal (left) and Noise (right) variables. Case of  $p \gg n$ and Toeplitz Design with $\rho=0.4$. }
\label{fig:3}
\includegraphics[width=7cm,height=5cm,keepaspectratio]{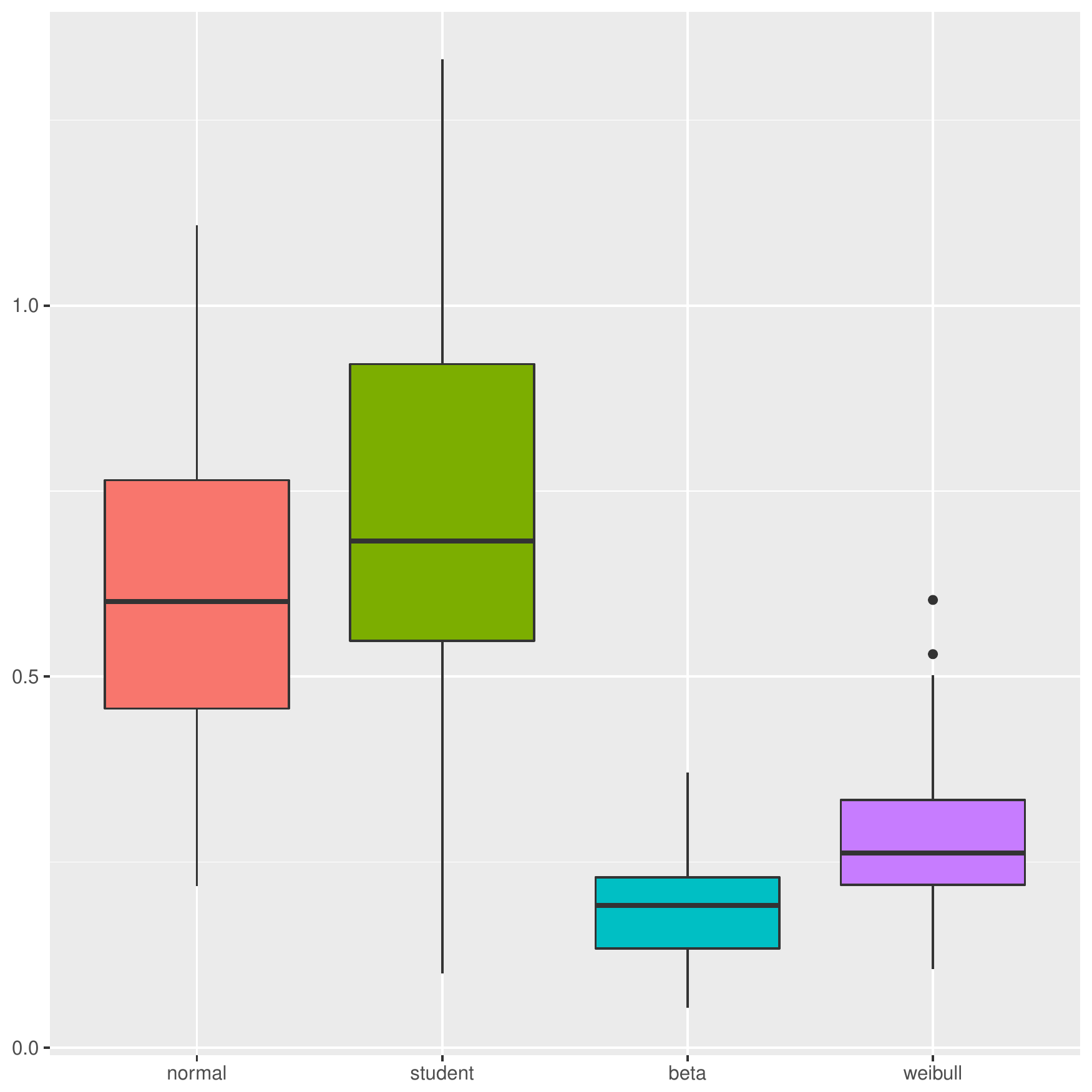}
\includegraphics[width=7cm,height=5cm,keepaspectratio]{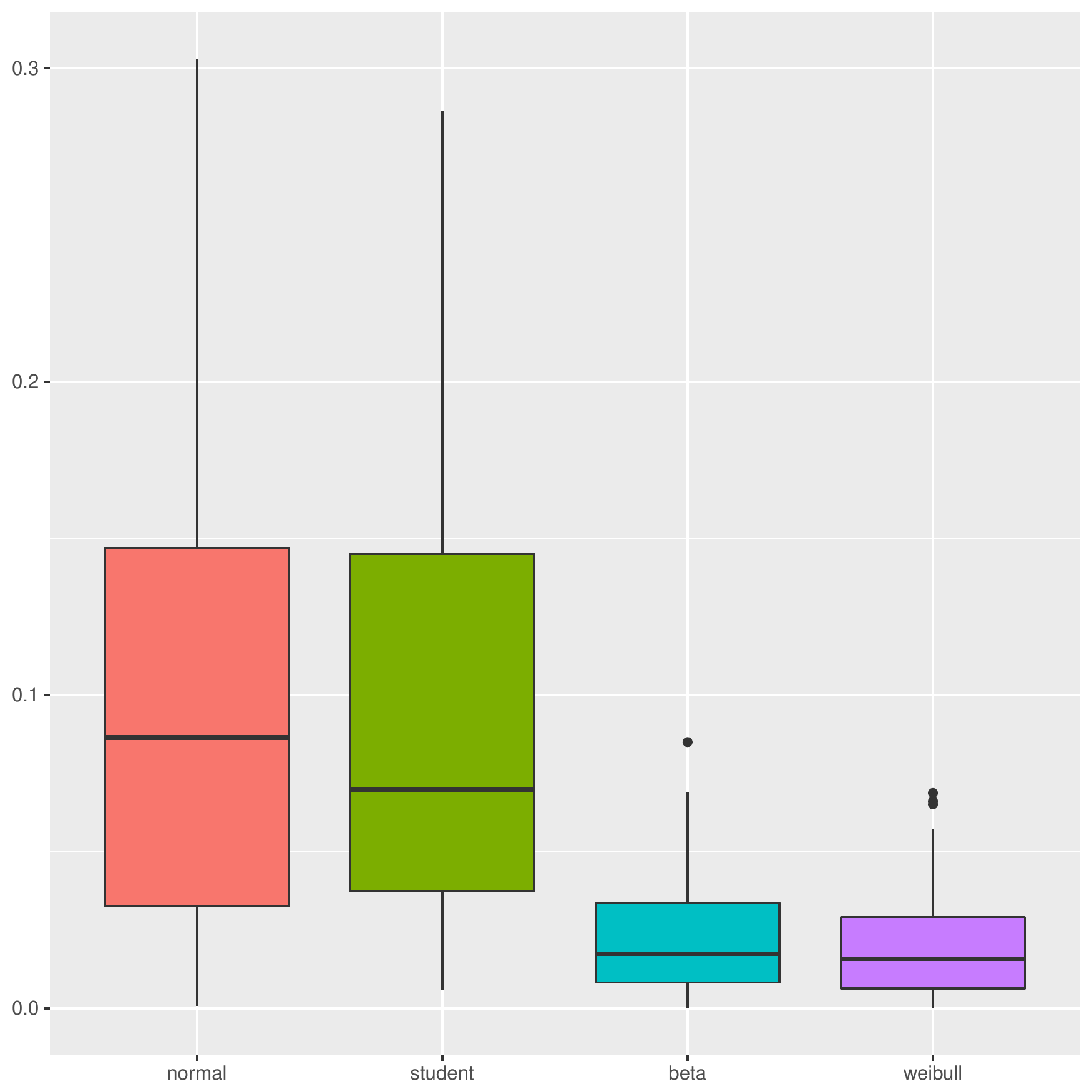}
\end{figure}

\begin{figure}[h]
\centering
\caption{Comparative boxplots of the average Interval length of Signal (left) and Noise (right) variables. Case of  $p \gg n$ and Identity Design. }
\label{fig:4}
\includegraphics[width=7cm,height=5cm,keepaspectratio]{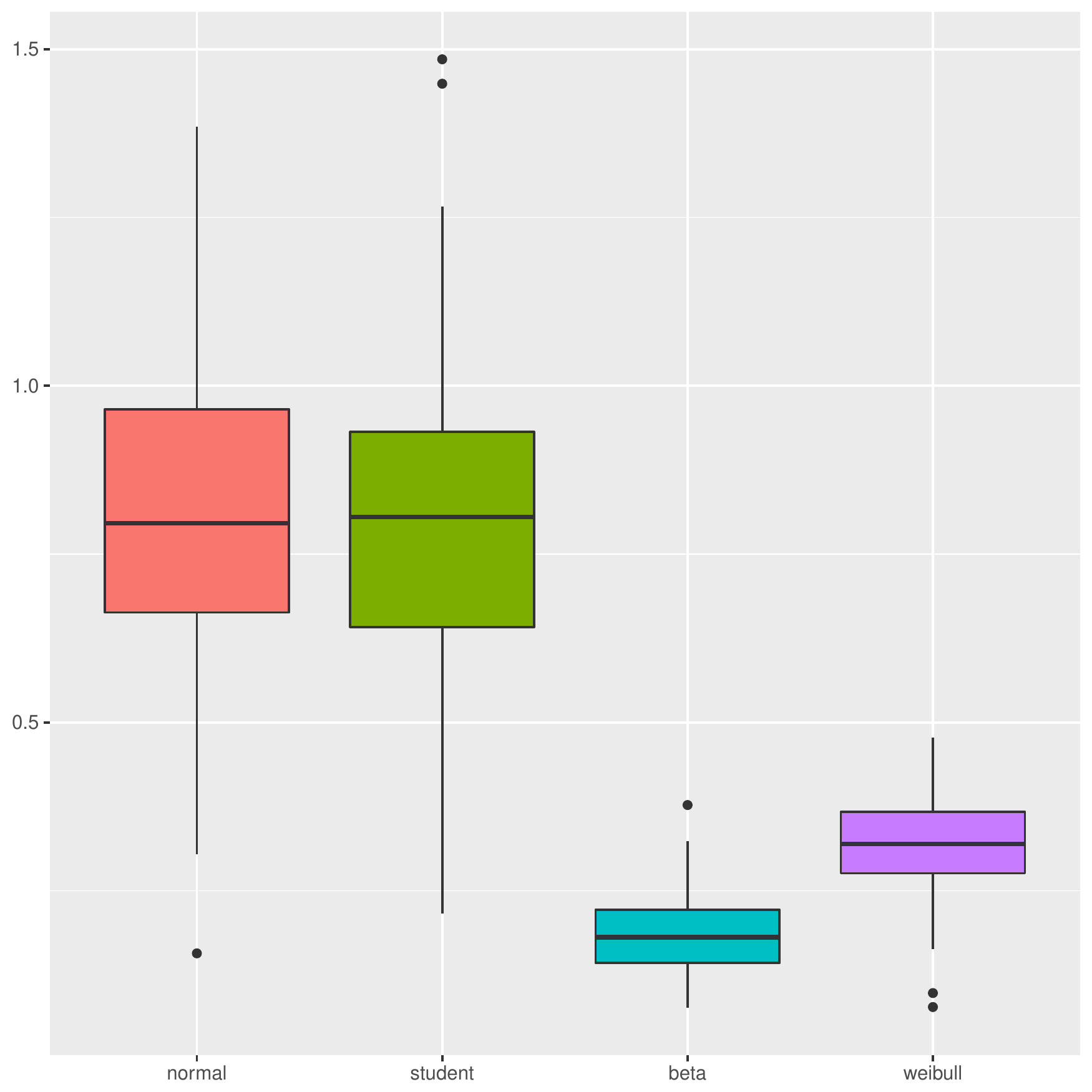}
\includegraphics[width=7cm,height=5cm,keepaspectratio]{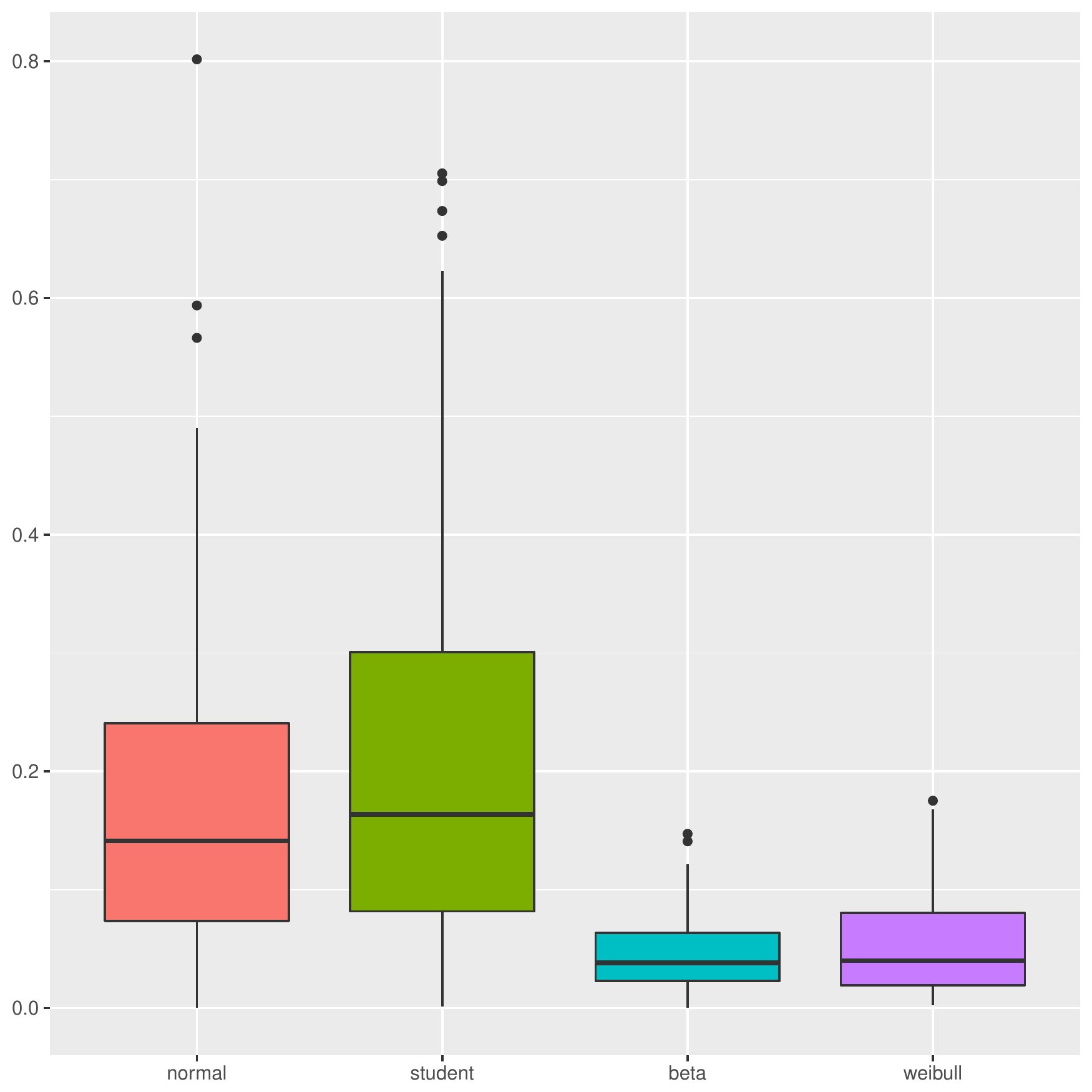}
\end{figure}
     
    \begin{table}[H]
    \centering
    \footnotesize
    \caption{Coverage Probability for Low-Dimensional Regime}
    \label{tab:lowdim}
    \begin{tabular}{@{}nd{1.1}*{3}{d{1.2}}d{1.1}d{3.2}@{}}
     \toprule 
        \multicolumn{1}{@{}N}{Distribution of the error term} &
        \multicolumn{3}{N}{Simulation Setting} &
        \multicolumn{1}{N@{}}{ } \\
      \cmidrule(lr){2-5}
        &
        \multicolumn{2}{V{6.5em}}{Toeplitz design} &
        \multicolumn{2}{V{6.5em}}{Identity design} \\
      \cmidrule(lr){2-3}\cmidrule(lr){4-5}          &
        \multicolumn{1}{V{4.5em}}{Signal Variable } &
        \multicolumn{1}{V{4.5em}}{Noise Variable} &
        \multicolumn{1}{V{4.5em}}{Signal Variable } &
        \multicolumn{1}{V{4.5em}}{Noise Variable} \\
              \cmidrule(r){1-1}\cmidrule(lr){2-2}\cmidrule(lr){3-3} 
        \cmidrule(lr){4-4}\cmidrule(lr){5-5}
        Normal    & 0.97   & 0.98 & 0.95 & 0.94 \\
        Student   & 0.97  &  1   &  0.97   & 0.98  \\
        Beta   &  0.94    &  1   &  0.98   &  0.97   \\
        Weibull   &  0.98   &  0.98   & 0.94   &  0.98   \\
      \bottomrule
    \end{tabular}
  \end{table}
    
  \begin{table}[H]
    \centering
    \footnotesize
    \caption{Coverage Probability for High-Dimensional Regime}
    \label{tab:highdim}
    \begin{tabular}{@{}nd{1.1}*{3}{d{1.2}}d{1.1}d{3.2}@{}}
     \toprule 
        \multicolumn{1}{@{}N}{Distribution of the error term} &
        \multicolumn{3}{N}{Simulation Setting} &
        \multicolumn{1}{N@{}}{ } \\
      \cmidrule(lr){2-5}
        &
        \multicolumn{2}{V{6.5em}}{Toeplitz design} &
        \multicolumn{2}{V{6.5em}}{Identity design} \\
      \cmidrule(lr){2-3}\cmidrule(lr){4-5}          &
        \multicolumn{1}{V{4.5em}}{Signal Variable } &
        \multicolumn{1}{V{4.5em}}{Noise Variable} &
        \multicolumn{1}{V{4.5em}}{Signal Variable } &
        \multicolumn{1}{V{4.5em}}{Noise Variable} \\
              \cmidrule(r){1-1}\cmidrule(lr){2-2}\cmidrule(lr){3-3} 
        \cmidrule(lr){4-4}\cmidrule(lr){5-5}
        Normal    & 0.92   & 0.96 & 0.97 & 0.95 \\
        Student   & 0.96  &  0.98   &  0.96   & 0.98  \\
        Beta   &  1    &  1   &  0.96   &  0.97   \\
        Weibull   &  0.95   &  1   & 0.87   &  0.97   \\
      \bottomrule
    \end{tabular}
  \end{table}

\subsection{Whole Blood Transcriptional HIV
 Data}

  The objective of this study is to  illustrate the performance of the proposed two-step estimator in characterizing the transcriptional signature of an early acute HIV infection. Researchers have recently shown great interest in modeling viral load (plasma HIV-1 RNA copies) data after initiation of a potent antiretroviral (ARV) treatment. Viral load is a measure of the amount of actively replicating virus and is used as a marker of disease progression among HIV-infected patients. However, the
extent of viral expression and the underlying mechanisms of the
persistence of HIV-1 in this viral reservoir have not been fully
recovered.

Moreover, viral load measurements are often subject to left censoring due to a lower limit of quantification.
We aim to find  a  pattern describing the
interaction between the HIV virus and the gene expression values and can be  useful for understanding
the pathogenesis of HIV infection and for developing effective vaccines \citep{F15}.   
We evaluated    48803 of  Illumina BreadArray based gene expressions identified through a   whole blood transcriptional,   genome-wide   analysis  for association with acute HIV infection.  Each array on the HumanHT-12 v4 Expression BeadChip targets more than 31,000 annotated genes with more than 47,000 probes derived from the National Center for Biotechnology Information Reference Sequence. 
This data set is part of the ``MicroArray quality control II'' project, which is available from the
gene expression omnibus database with accession number GSE29429.

   \begin{table}[h]
    \centering
    \footnotesize
    \caption{Transcriptional Signatures}
    \label{tab:4a}
    \begin{tabular}{@{}cd{13.1}*{2}{cd{1.2}*{2}}ld{1.1}rd{1.2}@{}}
     \toprule 
        \multicolumn{2}{@{}N}{} &
        \multicolumn{5}{N}{ } &
        \multicolumn{1}{N@{}}{ } \\
        &
        \multicolumn{2}{V{4.5em}}{Week 1} &
        \multicolumn{1}{V{0.1em}}{ } &
        \multicolumn{2}{V{4.5em}}{Week 4} &
        \multicolumn{1}{V{0.1em}}{ } \\
      \cmidrule(lr){2-3}  \cmidrule(lr){5-6} 
        &
        \multicolumn{1}{V{6.5em}}{Gene Symbol} &
        \multicolumn{1}{V{6.5em}}{Confidence Interval} &
         \multicolumn{1}{V{0.5em}}{ } &
        \multicolumn{1}{V{6.5em}}{Gene Symbol} &
        \multicolumn{1}{V{6.5em}}{Confidence Interval}  \\
              \cmidrule(lr){2-2}\cmidrule(lr){3-3} %
        \cmidrule(lr){5-5}\cmidrule(lr){6-6}  
          &  \mbox{\scriptsize MKL1} &   (-7.449,   -7.365)   & & \mbox{\scriptsize ABCD4} &   (-5.718, -3.020)  &   \\
          &  \mbox{\scriptsize MAGEC1} &   (-0.432, -0.345)   & & \mbox{\scriptsize LSP1} &  (-0.365,  -0.164)    &  \\
        & \mbox{\scriptsize PKD1L1} &  (-7.556,  -4.262)  &  & \mbox{\scriptsize PRDM16} &  (-1.252,   -0.388)  &   \\
         & \mbox{\scriptsize  PNOC} &  (-2.234, -2.146)   &   &\mbox{\scriptsize LOC728343} &  (-1.532,  -0.599)  &   \\
       & \mbox{\scriptsize SYNE2} &  (-1.725,  -1.235)    &  & \mbox{\scriptsize PES1} &  (-2.217,   -1.973)  &   \\
          &  \mbox{\scriptsize  CLK1} &   (-0.898,   -0.732)   &  & \mbox{\scriptsize FIBP} &  ( -7.563, -0.200)    &   \\
        & \mbox{\scriptsize CRB2} &  (-0.765,  -0.133 )  &   & \mbox{\scriptsize GPBP1L1} &  ( -5.267, -1.025)  &   \\
         & \mbox{\scriptsize RBM4} &  (-0.651,   -0.424)     &   & \mbox{\scriptsize CYorf15A} &  ( -1.023,  -0.787)  &   \\
       &  \mbox{\scriptsize LOC651287} &  (-2.654,   -0.116)  &   & \mbox{\scriptsize C5orf13} &  (-0.456,   -0.098)  &   \\
        & \mbox{\scriptsize MKLN1} &  (-4.901,   -2.457)   &  &  \mbox{\scriptsize REG1B} &  (-0.955,  -0.191)  &  \\
        &\mbox{\scriptsize   DBH} &  (-0.305,   -0.200)   &  & \mbox{\scriptsize SLCO4C1} &  (-0.696,   -0.537) &   \\
   & \mbox{\scriptsize PSORS1C1} &  (-0.238,   -0.048)  &  &  \mbox{\scriptsize LOC653344} &  (-0.263,  -0.204)   &  \\
       &\mbox{\scriptsize   C7orf45} &  (-0.766,   -0.025)   &  & \mbox{\scriptsize ADHFE1} &  (-0.346,  -0.162) &  \\
          & \mbox{\scriptsize   HS.578925} &  (-0.578,   -0.477)  &  &  \mbox{\scriptsize FCGR3A} &  (-0.566,  -0.011)  &   \\
        &\mbox{\scriptsize   HS.130424} &  (-1.341,   -0.160)& & \mbox{\scriptsize MARK3} &  (-0.407,  -0.072) &   \\
         & \mbox{\scriptsize   HS.147787} &  (-0.285,   -0.194)&   &    \mbox{\scriptsize POLR1C} &  (-0.385,   -0.209)  &   \\
       & \mbox{\scriptsize   GNL3} &  (-1.111,   -0.353)    &   & \mbox{\scriptsize UBE2L6} &  ( 0.351, -0.816) &  \\
      \bottomrule
    \end{tabular}
  \end{table}

58 acute HIV patients were recruited from locations in Africa (n=43) and the United States (n=15).
We analyze  the original data set containing  subjects both from Africa and the United States, with 186 males and
 females, whose Viral Loads are measured over a period of 24 weeks. 
Patient samples were collected at study enrollment (confirmed acute) for all patients and at weeks 1, 2, 4, 12 and 24.
Subjects are from 18  to 66
years old. The current data set also contains genetic information   of each
participant over     BreadArray expression values of around 6000 genes on different chromosomes.
 Weekly populations are analyzed separately. The sample size $n$ of each weekly data is around $20$. 
 We successfully applied our methodology to this data, despite the computational burden occurring with the extremely large amount of parameters.

 Table  \ref{tab:4a} summarizes   the confidence intervals concerning the treatment group. We found confidence intervals for all $48803$ genes with only $20$ samples in weekly data.
 Therefore, our method enables  the discovery of a genetic biological  pathways associated with the ARV treatment of HIV positive patients.  Censoring level was   2\% in Week 1, 5\% in Week 2, 10\% in Week 4,  70\% in Week 8, 40\% in Week 12 and 50\% in Week 24.
For illustration purposes, we present the results only    for the genes whose intervals did not contain zero, indicating  their  strong association with the Viral loads measurements. 
   
We observe that a number of the genes with large significance have been associated with HIV in previous studies; some, only very recently. MKL1 (megakaryoblastic leukemia (translocation) 1) gene is known to play an important role in the expansion and/or persistence of HIV infected cells in patients \citep{maldarelli_2014}. Similarly, from table \ref{tab:4a}, Week 1, we observe that MKL1 has a confidence interval far way from zero. Our findings of Week 1 also confirm that gene PKD1L1 has a significant confidence interval. The association of polycystic kidney disease 1 like 1 (PKD1L1) with kidney disease makes the gene expression a possible indicator of HIV associated nephropathy. In fact, kidney disease is often a sign of accelerated HIV disease progression \citep{bruggeman_2009}. In addition, as a member of ATP-binding cassette (ABC) drug transporters family, the gene ABCD4 we identified in Week 4 data has a potential important role in infectious diseases such as HIV-1 \citep{Crawford_2009}. Moreover, the gene expression GPBP1L1 is a kind of GC-rich promoter binding protein, which is a region important for HIV-1 transcription and thereby its propagation \citep{sawaya_1998}. The above showcase the parallel discovery of our method to the newly established results in  medicine, and provides evidence that our methods can be used to discover scientific findings in applications involving high-dimensional datasets.


\appendix

In Appendix A, we present proofs of the Theorems 1-9.   The rest of the supplementary material  contains proofs of the Lemmas 1-6.
Referenced citations are matching those of the main document.

\section{Proofs of  Main Theorems}\label{Proofs_theorem}

\begin{proof}[Proof of Theorem \ref{cor:fixed:ci}]
The proof of the theorem follows from the bounding residual terms in the Bahadur representation \eqref{delta_1}  with the help of Lemma \ref{cor:2} - \ref{lemma4}. 

Recall in Lemma \ref{lemma4}, we showed that 
$$\| I_4\|_\infty = \ocal_P\left( (r_n^{1/2} \vee r_n K_1) t^{1/2} (\log p)^{1/2} \bigvee t \log p / n^{1/2} \right).$$
For the  term  $I_3$, we have that
\begin{align*}
&\left\| \frac{1}{2f(0)}\left(\Omegab(\hat{\betab})-\Sigmab^{-1}(\betab^*)\right)\frac{1}{\sqrt{n}}\sum_{i=1}^n\psi_i(\betab^*) \right\|_\infty \\
&\qquad \leq \ocal_P\left( K^2 K_\gamma^2 s_\Omega^3 (1 \vee K_\gamma \vee K_\gamma \lambda_j) (\lambda_j \vee r_n^{1/2} t^{3/4} (\log p / n)^{1/2} \vee t \log p / n) \right),
\end{align*}
by applying H\"older's inequality and Hoeffding's inequality along with Lemma \ref{lemma2}. 

For the   term $I_2$, we have 
\begin{align*}
&\left\| \frac{1}{2f(0)}\Omegab(\hat{\betab})\sqrt{n}\cdot \mathcal{O}(\|\hat{\betab}-\betab^*\|_1)(\hat{\betab}-\betab^*) \right\|_\infty \\
& \qquad \leq \frac{\sqrt{n}}{2f(0)} \left( \left \| \Omegab(\hat{\betab}) - \Sigmab^{-1}(\betab^*)\right\|_{\max} + \left\| \Sigmab^{-1}(\betab^*) \right\|_{\max} \right) \mathcal{O}(\|\hat{\betab}-\betab^*\|_1^2) \\
& \qquad \leq \frac{\sqrt{n}}{2f(0)} \left( \left \| \Omegab(\hat{\betab}) - \Sigmab^{-1}(\betab^*)\right\|_{\infty} + \left\| \Sigmab^{-1}(\betab^*) \right\|_{\max} \right) \mathcal{O}(\|\hat{\betab}-\betab^*\|_1^2) \\
&\qquad \leq \ocal_P\left( K^2 K_\gamma^2 s_\Omega^3 d_n^2 n^{1/2} (1 \vee K_\gamma \vee K_\gamma \lambda_j) (\lambda_j \vee r_n^{1/2} t^{3/4} (\log p / n)^{1/2} \vee t \log p / n) \right. \\
&\qquad \qquad \left. \bigvee K_\gamma n^{1/2} d_n^2 \right),
\end{align*}
by H\"older's inequality and Lemma \ref{lemma2}, where $\| A \|_\infty$ denotes the max row sum of matrix $A$ and $\| A \|_{\max}$ denotes the maximum element in the matrix $A$.

Lastly, for the only remainder term in \eqref{delta_1}, $I_1$,  we use  H\"older's inequality and Lemma \ref{lemma2},
\begin{align*}
&\sqrt{n}\left(I-\Omegab(\hat{\betab})   \Sigmab(\betab^*)  \right)\left(\hat{\betab}-\betab^*\right) \\ 
& \qquad = \sqrt{n}\left(\Sigmab^{-1}(\betab^*)  -\Omegab(\hat{\betab}) \right) \Sigmab(\betab^*)  \left(\hat{\betab}-\betab^*\right)\\
& \qquad \leq \ocal_P\left( K^2 K_\gamma^3 s_\Omega^3 d_n n^{1/2} (1 \vee K_\gamma \vee K_\gamma \lambda_j) (\lambda_j \vee r_n^{1/2} t^{3/4} (\log p / n)^{1/2} \vee t \log p / n)  \right).
\end{align*}
\end{proof}

\begin{proof}[Proof of Theorem \ref{normality}]

We begin the proof by noticing that 
\begin{align*}
\psi_i(\betab^*) &= \mbox{sign}(y_i-\max\{0,x_i\betab^*\})(w_i(\betab^*))^\top \\ 
&= \mbox{sign}(\max\{0,x_i\betab^*+\varepsilon_i\}-\max\{0,x_i\betab^*\})(w_i(\betab^*))^\top  .
\end{align*}
Recollect that by Condition {\bf (E)}, $\PP(\varepsilon_i \geq 0 ) =1/2$.
Additionally, we observe that in distribution, the term on the right hand side is equal to $ w_i^\top (\betab^*) R_i $,  with 
$\{ R_i\}_{i=1}^n$ denoting an i.i.d. Rademarcher sequence defined as  $R_i = \mbox{sign}(-\varepsilon_i)$.
Hence, it suffices to analyze the distributional properties of $ w_i^\top (\betab^*) R_i.$ Moreover, Rademacher random variables  are  independent in distribution from $w_i(\betab^*)$.  
Thus, we provide asymptotics of 
\begin{align*}
  \frac{1}{2f(0)}\Sigmab^{-1}(\betab^*)\frac{1}{\sqrt{n}}\sum_{i=1}^n w_i^\top (\betab^*)R_i.
\end{align*} 
We begin by defining 
$$V_i := \frac{1}{\sqrt{n}}W_{ij}\ind(x_i\betab^*>0)R_i = \frac{1}{\sqrt{n}}X_{ij}\ind(x_i\betab^*>0)R_i$$   and we also define $T_n := \sum_{i=1}^n V_i.$ 
Notice that $V_i$'s are independent from each other, since we assumed that each observation  is independent in our design. We have
\begin{align}\label{EV_i}
& \sum_{i=1}^n \EE |V_i|^{2+\delta} 
= \left(\frac{1}{\sqrt{n}}\right)^{2+\delta}\EE \sum_{i=1}^n |X_{ij}\ind(x_i\betab^*>0)|^{2+\delta} \leq  n^{-1-\delta/2}\EE \sum_{i=1}^n |X_{ij}|^{2+\delta} \leq n^{-\delta/2}K .
\end{align}
Moreover,
$
\var T_n  
= \frac{1}{n}\sum_{i=1}^n \EE \left(X_{ij}\ind(x_i\betab^*>0)R_i\right)^2- \left(\EE X_{ij}\ind(x_i\betab^*>0)R_i\right)^2.
$
Since $R_i$  are independent from $X$, $$\EE X_{ij}\ind(x_i\betab^*>0)R_i = \EE X_{ij}\ind(x_i\betab^*>0)\cdot \EE R_i = 0.$$ In addition, also due to this fact, $V_i$ follows a symmetric distribution about $0$. Thus,
\begin{align*}
\var T_n &= \frac{1}{n}\EE \sum_{i=1}^n \left(X_{ij}\ind(x_i\betab^*>0)R_i\right)^2 = \frac{1}{n}\EE \left(\sum_{i=1}^n X_{ij}\ind(x_i\betab^*>0)R_i\right)^2 \geq \frac{1}{n}\int_{-n}^n t_n^2f(t_n) dt_n,
\end{align*}
where with a little abuse in notation we denote the density and distribution of $T_n$ to be $f(t_n)$ and $F(t_n)$.
 Observe  that $$\frac{1}{n}\EE \left(\sum_{i=1}^n X_{ij}\ind(x_i\betab^*>0)R_i\right)^2 = \frac{1}{n}\int_{-\infty}^\infty t_n^2f(t_n) dt_n\geq \frac{1}{n}\int_{-n}^n t_n^2f(t_n) dt_n.$$ Thus,
 \begin{align}\label{varT_n}
\var T_n &\geq \frac{1}{n}\left(t_n^2F(t_n)\given[\big]_{-n}^n - 2\int_{-n}^n t_nF(t_n) dt_n\right) 
\\
&\geq \frac{1}{n}\left(n^2F(n)-n^2F(-n) - 2\int_{-n}^n t_n dt_n\right)\nonumber\\
&=  \frac{1}{n}\left(2n^2F(n) - n^2\right) = n\left( 2F(n) - 1\right) \nonumber
\end{align} 
Now combining \eqref{EV_i} and \eqref{varT_n}, we have
$\lim_{n\rightarrow\infty} \frac{\sum_{i=1}^n \EE |V_i|^{2+\delta}}{(\var T_n)^{1+\frac{\delta}{2}}} = 0.$
Thereby, we arrive at the result
$$\frac{1}{\sqrt{n}}\left(\sum_{i=1}^n w_i^\top (\betab^*)R_i\right)_j \xrightarrow{d} \mathcal{N}\left(0,\var T_n\right),$$
with the fact that 
$ \var T_n = \frac{1}{n}\EE \sum_{i=1}^n W_{ij}(\betab^*)^2 = \frac{1}{n}\EE W_j^\top (\betab^*)W_j(\betab^*) = \Sigmab(\betab^*)_{jj}.$ 
Also, the covariance 
$$\EE\left[\frac{1}{\sqrt{n}}\left(\sum_{i=1}^n w_i^\top (\betab^*)R_i\right)_{j_1}\frac{1}{\sqrt{n}}\left(\sum_{i=1}^n w_i^\top (\betab^*)R_i\right)_{j_2}\right] = \EE \left[\frac{1}{n}\sum_{i=1}^nW_{ij_1}(\betab^*)W_{ij_2}(\betab^*)\right] = \Sigmab(\betab^*)_{j_1j_2}.$$

Therefore, we have the following conclusion, 
$$\left[\frac{1}{2f(0)}\Sigmab^{-1}(\betab^*)\frac{1}{\sqrt{n}}\sum_{i=1}^n\psi_i(\betab^*)\right]_j\xrightarrow{d} \mathcal{N}\left(0, \frac{1}{4f(0)^2}\left[\Sigmab^{-1}(\betab^*)\Sigmab(\betab^*)\left(\Sigmab^{-1}(\betab^*)\right)^\top \right]_{jj}\right),$$
where $j=1,\cdots,p$. This gives 
\begin{align} \label{invsigma_normality}
\left[ \Sigmab^{-1}(\betab^*)_{jj} \right]^{-\frac{1}{2}} \left[\frac{1}{2f(0)}\Sigmab^{-1}(\betab^*)\frac{1}{\sqrt{n}}\sum_{i=1}^n\psi_i(\betab^*)\right]_j\xrightarrow{d} \mathcal{N}\left(0, \frac{1}{4f(0)^2}\right)
\end{align}

Notice that for two nonnegative real numbers $a$ and $b$, it holds that
\begin{align*}
\frac{1}{\sqrt{a}} - \frac{1}{\sqrt{b}} = \frac{\sqrt{b} - \sqrt{a}}{\sqrt{ab}} = \frac{b - a}{\sqrt{ab}(\sqrt{b} + \sqrt{a})}.
\end{align*}
We first make note of a result in the proof of Theorem \ref{cor:ci}, that 
\begin{align} \label{eqn:oso_bound}
\left\| \hat \Omegab(\hat\betab) \Sigmab (\hat \betab) \hat \Omegab(\hat\betab) - \Sigmab^{-1} (\betab^*)\right\|_{\max} = \smallocal_P(1)
\end{align}
Let $a = \left[ \hat \Omegab(\hat\betab) \Sigmab (\hat \betab) \hat \Omegab(\hat\betab) \right]_{jj}$ and $b = \Sigmab^{-1} (\betab^*)_{jj}$. By Condition {\bf{(CC)}}, we have $\sqrt{b}$ is bounded away from zero. Then, $\sqrt{a}$ is also bounded away from zero by \eqref{eqn:oso_bound}, and so is  $\sqrt{ab}(\sqrt{b} + \sqrt{a})$, since we have
\begin{align*}
\left[ \Sigmab^{-1} (\betab^*)\right]_{jj}  - \left[ \hat \Omegab(\hat\betab) \Sigmab (\hat \betab) \hat \Omegab(\hat\betab) \right]_{jj} \leq
\left\| \hat \Omegab(\hat\betab) \Sigmab (\hat \betab) \hat \Omegab(\hat\betab) - \Sigmab^{-1} (\betab^*)\right\|_{\max}
= \smallocal_P \left( 1 \right).
\end{align*}
The rate above follows from \eqref{oso_invsig_max_bound} in the proof of Theorem \ref{cor:ci}. Notice the rate is of order smaller than the rate assumption in Theorem \ref{cor:fixed:ci}.

Thus, we can deduce that
\begin{align*}
\left[\Omegab(\hat{\betab}) \hat\Sigmab(\hat{\betab})\Omegab (\hat{\betab})\right]_{jj}^{-\frac{1}{2}} - \left[ \Sigmab^{-1}(\betab^*)_{jj} \right]^{-\frac{1}{2}} \leq 
C \left\| \hat \Omegab(\hat\betab) \Sigmab (\hat \betab) \hat \Omegab(\hat\betab) - \Sigmab^{-1} (\betab^*)\right\|_{\max}.
\end{align*}
for some finite constant $C$. Applying Slutsky theorem on \eqref{invsigma_normality} with the inequality above, the desired result is obtained.
\end{proof}

\begin{proof}[Proof of Theorem \ref{thm:f}]
We can rewrite the expression $\hat f(0)$ in \eqref{eq:density} as
\begin{align*}
\hat f(0) &= \hat h_n^{-1}\frac
{\sum_{i=1}^n \ind(x_i\hat{\betab}>0)\ind(0\leq y_i-x_i\hat{\betab}\leq \hat h_n)}
{\sum_{i=1}^n\ind(x_i\hat{\betab}>0)} \\
&= \hat h_n^{-1}\frac
{n^{-1} \sum_{i=1}^n \ind(x_i\hat{\betab}>0)\ind(0\leq y_i-x_i\hat{\betab}\leq \hat h_n)}
{n^{-1} \sum_{i=1}^n \PP \{ x_i \betab^* >0 \}} 
\cdot \frac{n^{-1} \sum_{i=1}^n \PP \{ x_i \betab^* >0 \}} {n^{-1}\sum_{i=1}^n\ind(x_i\hat{\betab}>0)}.
\end{align*}
Since $\left|  n^{-1} \sum_{i=1}^n \left[ \ind\{x_i \hat \betab >0 \} - \PP \{ x_i \betab^* >0\}\right] \right| = \smallocal_P(1)$, we have 
\begin{align*}
\hat f(0) \xrightarrow{d} \frac
{(\hat h_n n)^{-1} \sum_{i=1}^n \ind(x_i\hat{\betab}>0)\ind(0\leq y_i-x_i\hat{\betab}\leq \hat h_n)}
{n^{-1} \sum_{i=1}^n \PP \{ x_i \betab^* >0 \}}.
\end{align*}
Using a similar argument and the fact that $\lim_{n \rightarrow \infty} \hat h_n / h_n = 1$, we have
\begin{align*}
\hat f(0) \xrightarrow{d} \frac
{(h_n n)^{-1} \sum_{i=1}^n \ind(x_i\hat{\betab}>0)\ind(0\leq y_i-x_i\hat{\betab}\leq \hat h_n)}
{n^{-1} \sum_{i=1}^n \PP \{ x_i \betab^* >0 \}}.
\end{align*}

Now we work on the numerator of right hand side. Specifically, let $\eta_i = y_i - x_i\betab^*$ and $\hat \eta_i = y_i - x_i \hat\betab$, we look at the difference of the quantities below,
\begin{align*}
& (h_n n)^{-1}   \left| \sum_{i=1}^n \ind\{ x_i \hat \betab >0\} \ind\{0 \leq \hat \eta_i \leq \hat h_n\}  - \sum_{i=1}^n \ind\{ x_i \betab^* >0\} \ind\{0 \leq \eta_i \leq h_n\} \right| \\
&\leq (h_n n)^{-1} \left| \sum_{i=1}^n \ind\{ x_i \hat \betab >0\} \ind\{0 \leq \hat \eta_i \leq \hat h_n\}  
- \sum_{i=1}^n \ind\{ x_i \betab^* >0\} \ind\{0 \leq \hat \eta_i \leq \hat h_n\} \right| \\
&\qquad + 2(h_n n)^{-1} \left| \sum_{i=1}^n \ind\{ x_i \hat \betab >0\} \ind\{0 \leq \eta_i \leq h_n\} - \sum_{i=1}^n \ind\{ x_i \betab^* >0\} \ind\{0 \leq \eta_i \leq h_n\} \right| \\
&\qquad + (h_n n)^{-1} \left| \sum_{i=1}^n \ind\{ x_i \betab^* >0\} \ind\{0 \leq \hat \eta_i \leq \hat h_n\}  
- \sum_{i=1}^n \ind\{ x_i \betab^* >0\} \ind\{0 \leq \eta_i \leq h_n\} \right| \\
& \leq \underbrace{3(h_n n)^{-1} \sum_{i=1}^n  \ind \{ x_i \betab^* \leq x_i  (\hat \betab - \betab^*) \}}_{T_1}
 + \underbrace{(h_n n)^{-1} \left| \sum_{i=1}^n \left( \ind\{0 \leq \hat \eta_i \leq \hat h_n\}  - \ind\{0 \leq \eta_i \leq h_n\} \right) \right|}_{T_2}.
\end{align*}

We begin with term $T_1$. By Condition {\bf{(E)}}, we have $\EE T_1 = \smallocal (h_n^{-1}  \| \hat \betab - \betab^*\|_1)$. By Corollary \ref{lemma0}, we have 
$$T_1 - \EE T_1 \leq |T_1 - \EE T_1| = \smallocal_P \left( h_n^{-1} (r_n^{1/2} t^{3/4} (\log p / n)^{1/2} \vee t\log p / n) \right),$$
which then brings us that $T_1$ is of order $\smallocal_P(1)$. For term $T_2$, we work out the expression
\begin{align*}
&\ind\{0 \leq \hat \eta_i \leq \hat h_n\}  - \ind\{0 \leq \eta_i \leq h_n\}  = \ind\{0 \leq \hat \eta_i\}\ind(\hat \eta_i \leq \hat h_n\}  - \ind\{0 \leq \eta_i\} \ind\{\eta_i \leq h_n\} \\
& \qquad = \ind\{0 \leq \hat \eta_i\} \left( \ind(\hat \eta_i \leq \hat h_n\} - \ind(\eta_i \leq h_n\} \right) + \left( \ind\{0 \leq \hat \eta_i\} - \ind\{0 \leq \eta_i\} \right) \ind\{\eta_i \leq h_n\} \\
&\qquad \leq \ind\{\hat \eta_i \leq \hat h_n\} - \ind\{\eta_i \leq h_n\} + \ind\{0 \leq \hat \eta_i\} - \ind\{0 \leq \eta_i\}.
\end{align*}
Next, we notice that for real numbers $a$ and $b$, we have  $\ind(a > 0) - \ind(b > 0) \leq \ind(|b| \leq |a - b|).$ 
Thus, we have 
\begin{align*}
T_2 &\leq (h_n n)^{-1} \left| \sum_{i=1}^n \left\{ \ind(\hat \eta_i \leq \hat h_n\} - \ind\{\eta_i \leq h_n\} + \ind\{0 \leq \hat \eta_i\} - \ind\{0 \leq \eta_i\} \right) \right| \\
&\leq h_n^{-1} n^{-1} \sum_{i=1}^n \ind\{ | h_n - \eta_i | \leq | \hat h_n - h_n | +  | \eta_i - \hat \eta_i | \} + h_n^{-1} n^{-1} \sum_{i=1}^n \ind\{ | \eta_i | \leq | \hat \eta_i - \eta_i |\} \\
&\leq \underbrace{h_n^{-1} n^{-1} \sum_{i=1}^n \ind\{ | h_n - \eta_i | \leq | \hat h_n - h_n | +   \| x_i \|_\infty \|\hat \betab - \betab^*\|_1 \}}_{T_{21}} \\
&\qquad + \underbrace{h_n^{-1} n^{-1} \sum_{i=1}^n \ind\{ | \eta_i | \leq \| x_i \|_\infty \|\betab^* - \hat \betab\|_1 \}}_{T_{22}}
\end{align*}
To bound $T_{21}$, we use similar techniques as with $T_1$. Notice that 
\begin{align*}
\EE T_{21} = h_n^{-1} \PP \left( | h_n - \eta_i | \leq | \hat h_n - h_n | +   \| x_i \|_\infty \|\hat \betab - \betab^*\|_1 \right)
\end{align*}
It is easy to see that $| h_n - \eta_i |$ shares the nice property of the density of $\varepsilon_i$. Thus, $\EE T_{21}$ is bounded by $\smallocal_P(1)$. Then by Hoeffding's inequality, we have that with probability approaching $1$ that $T_{21}$ is of $\smallocal_P(1)$. $T_{22}$ can be bounded in exactly the same steps.

Finally, we are ready to put everything together that
$$(h_n n)^{-1} \left| \sum_{i=1}^n \ind\{ x_i \hat \betab >0\} \ind\{0 \leq \hat \eta_i \leq \hat h_n\}  - \sum_{i=1}^n \ind\{ x_i \betab^* >0\} \ind\{0 \leq \eta_i \leq h_n\} \right| = \smallocal_P(1).$$
By applying Slutsky theorem, the result follows directly,
\begin{align*}
\hat f(0) \xrightarrow{d} \frac
{\sum_{i=1}^n \ind\{ x_i \betab^* >0\} \ind\{0 \leq \eta_i \leq h_n\}}
{n^{-1} \sum_{i=1}^n \PP \{ x_i \betab^* >0 \}}.
\end{align*}
\end{proof}

\begin{proof}[Proof of Corollary \ref{cor1}]
By multiplying and dividing the term $f(0)$, we can rewrite the term on the left hand side as 
\begin{align*}
\left[\Omegab(\hat{\betab}) \hat\Sigmab(\hat{\betab})\Omegab (\hat{\betab})\right]_{jj}^\frac{1}{2}U_j \cdot 2 \widehat f(0)
&= \left[\Omegab(\hat{\betab}) \hat\Sigmab(\hat{\betab})\Omegab (\hat{\betab})\right]_{jj}^\frac{1}{2}U_j \cdot 2 f(0) \frac{\widehat f(0)}{f(0)}.
\end{align*}
Also, as a result of theorem \ref{thm:f}, we have
\begin{align*}
\frac{|\widehat f(0) - {f(0)}|}{f(0)} = |\widehat f(0)/f(0) - 1| = \smallocal_P(1),
\end{align*}
with Condition {\bf{(E)}} guarantees that $f(0)$ is bounded away from $0$. It also indicates that $\widehat f(0) / f(0) \xrightarrow{d} 1$. Finally, we apply Slutsky's Theorem and Theorem \ref{normality}, we have
\begin{align*}
\left[\Omegab(\hat{\betab}) \hat\Sigmab(\hat{\betab})\Omegab (\hat{\betab})\right]_{jj}^\frac{1}{2}U_j \cdot 2 \widehat f(0) \xrightarrow[n,p,s_{\betab^*}\rightarrow\infty]{d} \mathcal{N}\left(0,1\right).
\end{align*}
\end{proof}

\begin{proof}[Proof of Theorem \ref{cor:ci}]
 The result of Theorem \ref{cor:ci} is a simple consequence of Wald's device and results of Corollary \ref{cor1}.
 The only missing link is an upper bound on 
 \begin{align} \label{oso_invsig_max}
 \left\| \Omegab(\hat\betab) \Sigmab (\hat \betab) \Omegab(\hat\betab) - \Sigmab^{-1} (\betab^*)\right\|_{\max}.
 \end{align}
 First, observe that 
 \[
  \Omegab(\hat\betab) \Sigmab (\hat \betab) \Omegab(\hat\betab) - \Sigmab^{-1} (\betab^*) 
 =
 \underbrace{\left( \Omegab(\hat\betab) - \Sigmab^{-1} (\betab^*) \right)  \Sigmab (\hat \betab) \Omegab(\hat\betab)}_{T_1}   + \underbrace{\Sigmab^{-1} (\betab^*) \left( \Sigmab (\hat \betab) \Omegab(\hat\betab) - \mathbb{I} \right)}_{T_2} .
 \]
 Regarding term $T_1$, observe that by Lemma \ref{lemma2}  it is equal to $\smallocal_P(1)$ whenever $\|\Sigmab (\hat \betab) \Omegab(\hat\betab)\|_{\max}$ is $\mathcal{O}_P(1)$. This can be seen from the decomposition of $\Sigmab (\hat \betab)\Omegab(\hat\betab) - \mathbb{I}$, which reads,
 \begin{align*}
 \left\| \Sigmab (\hat \betab)\Omegab(\hat\betab) - \mathbb{I} \right\|_{\max} & =
\underbrace{\left\| \Sigmab^{-1} (\betab^*) \left(\hat \Sigmab  (\hat\betab )- \Sigmab  (\betab^*) \right) \right\|_{\max}}_{T_{21}} \\
&+ \underbrace{\left\| \left( \Omegab(\hat\betab) - \Sigmab^{-1} (\betab^*) \right)   \left(\hat \Sigmab  (\hat\betab )-  \Sigmab  (\betab^*) \right) \right\|_{\max} }_{T_{22}}
+ \underbrace{ \left\| \Sigmab  (\betab^*) \left( \Omegab(\hat\betab) - \Sigmab^{-1} (\betab^*) \right) \right\|_{\max}}_{T_{23}}
 \end{align*}
 
 We notice that
 \begin{align}
 T_{21} &= \left\| \Sigmab^{-1} (\betab^*) \left( n^{-1} \sum_{i=1}^n w_i^\top(\hat \betab) w_i(\hat \betab) - n^{-1} \sum_{i=1}^n w_i^\top(\betab^*) w_i(\betab^*) \right. \right. \nonumber\\ 
  &\qquad \left. \left. + n^{-1} \sum_{i=1}^n w_i^\top(\betab^*) w_i(\betab^*) - n^{-1} \EE \sum_{i=1}^n w_i^\top(\betab^*) w_i(\betab^*) \right) \right\|_{\max}  \nonumber \\
 &\leq \left\| \Sigmab^{-1} (\betab^*) \left( n^{-1} \sum_{i=1}^n \left(w_i(\hat \betab) + w_i(\betab^*)\right)^\top \left(w_i(\hat \betab) - w_i(\betab^*)\right) \right) \right\|_{\max} \label{T211} \\ 
 &\qquad + \left\| \Sigmab^{-1} (\betab^*) \left( n^{-1} \sum_{i=1}^n \left( w_i^\top(\betab^*) w_i(\betab^*) - \EE w_i^\top(\betab^*) w_i(\betab^*) \right) \right) \right\|_{\max}. \label{T212}
 \end{align}
 For \eqref{T211}, we have the following bound
 \begin{align*}
 \eqref{T211} &\leq \left\| \Sigmab^{-1} (\betab^*) \right\|_\infty  \left\| n^{-1} \sum_{i=1}^n \left(w_i(\hat \betab) + w_i(\betab^*)\right)^\top \left(w_i(\hat \betab) - w_i(\betab^*)\right) \right\|_{\max} \\
 &\leq K_\gamma s_\Omega n^{-1} \sum_{i=1}^n 2K^2 \left( \ind(x_i \hat \betab > 0) - \ind(x_i \betab^*) \right),
 \end{align*}
 where $\| A \|_\infty$ denotes the max row sum of matrix $A$ and $\| A \|_{\max}$ denotes the maximum element in the matrix $A$. By Lemma \ref{lemma0}, we can easily bound the term above with $\ocal_P \left( K^2 K_\gamma s_\Omega (r_n^{1/2} t^{3/4} (\log p / n)^{1/2} \vee t \log p / n)\right)$.
For \eqref{T212}, we start with the following term,
 \begin{align*}
 n^{-1} \sum_{i=1}^n \left( W_{ij}(\betab^*) W_{ik}(\betab^*) - \EE W_{ij}(\betab^*) W_{ik}(\betab^*) \right).
 \end{align*}
 Applying Hoeffding's inequality on this term, we have that with probability approaches $1$, the term is bounded by $\ocal_P(n^{-1/2})$. Then we bound term \eqref{T212} as following,
 \begin{align*}
 \eqref{T212} &\leq \left\| \Sigmab^{-1} (\betab^*) \right\|_\infty  \left\| n^{-1} \sum_{i=1}^n \left( w_i^\top(\betab^*) w_i(\betab^*) - \EE w_i^\top(\betab^*) w_i(\betab^*) \right) \right\|_{\max} \\
 &\leq K_\gamma s_\Omega \max_{j, k} \left\{ n^{-1} \sum_{i=1}^n \left( W_{ij}(\betab^*) W_{ik}(\betab^*) - \EE W_{ij}(\betab^*) W_{ik}(\betab^*) \right) \right\} = \smallocal_P(1)
 \end{align*}
 
Term $T_{22}$ can be bounded using Lemma \ref{lemma2} and the results from term $T_{21}$, and turns out to be of order
\begin{align*}
&\ocal_P\left( K^4 K_\gamma^2 s_\Omega^3 (1 \vee K_\gamma \vee K_\gamma \lambda_j) (r_n^{1/2} t^{3/4} (\log p / n)^{1/2} \vee t \log p / n) (\lambda_j \vee r_n^{1/2} t^{3/4} (\log p / n)^{1/2} \vee t \log p / n) \right).
\end{align*}

Lastly, by Lemma \ref{lemma2}, term $T_{23}$ is of order 
$$\ocal_P\left( K^2 K_\gamma^3 s_\Omega^3 (1 \vee K_\gamma \vee K_\gamma \lambda_j) (\lambda_j \vee r_n^{1/2} t^{3/4} (\log p / n)^{1/2} \vee t \log p / n) \right).$$
 
Putting the terms together, we have $\left\| \Sigmab (\hat \betab)\Omegab(\hat\betab) - \mathbb{I} \right\|_{\max}$ bounded by
\begin{align*}
&\ocal_P\left( s_\Omega (r_n^{1/2} t^{3/4} (\log p / n)^{1/2} \vee t \log p / n) \bigvee s_\Omega^3 (1 \vee \lambda_j)(\lambda_j \vee r_n^{1/2} t^{3/4} (\log p / n)^{1/2} \vee t \log p / n) \right. \\
&\qquad \left. \bigvee s_\Omega^3 (1 \vee \lambda_j) (r_n^{1/2} t^{3/4} (\log p / n)^{1/2} \vee t \log p / n) (\lambda_j \vee r_n^{1/2} t^{3/4} (\log p / n)^{1/2} \vee t \log p / n) \right).
\end{align*}
Thus, $\|\Sigmab (\hat \betab) \Omegab(\hat\betab)\|_{\max}$ is $\mathcal{O}_P(1)$, and so can $T_2$ be shown similarly. The expression \eqref{oso_invsig_max} is then bounded as, 
\begin{align} \label{oso_invsig_max_bound}
&\left\| \hat \Omegab(\hat\betab) \Sigmab (\hat \betab) \hat \Omegab(\hat\betab) - \Sigmab^{-1} (\betab^*)\right\|_{\max}  \\
&\qquad = \ocal_P\left( s_\Omega (r_n^{1/2} t^{3/4} (\log p / n)^{1/2} \vee t \log p / n) \bigvee s_\Omega^3 (1 \vee \lambda_j)(\lambda_j \vee r_n^{1/2} t^{3/4} (\log p / n)^{1/2} \vee t \log p / n) \right. \nonumber \\
&\qquad \left. \bigvee s_\Omega^3 (1 \vee \lambda_j) (r_n^{1/2} t^{3/4} (\log p / n)^{1/2} \vee t \log p / n) (\lambda_j \vee r_n^{1/2} t^{3/4} (\log p / n)^{1/2} \vee t \log p / n) \right)\nonumber,
\end{align}
which then completes the proof.

 \end{proof}

\begin{proof}[Proof of Theorem \ref{cor:uniform:ci}]
 The result of Theorem \ref{cor:uniform:ci}  holds by observing that Bahadur representations \eqref{delta_1}   remain  accurate uniformly in the sparse vectors $\betab \in \mathcal{B}$; hence, all the steps of Theorem \ref{cor:fixed:ci}
 apply in this case as well. 
 \end{proof}

\begin{proof}[Proof of Theorem \ref{thm:clad_temp}]
The proof for the result with initial estimator chosen as the penalized CLAD estimator of \cite{muller_vdgeer_2014} follows directly from Lemma \ref{lemma0}-\ref{lemma4} and Theorem \ref{cor:fixed:ci}-\ref{cor:ci} with $r_n = s_{\betab^*}^{1/2} (\log p / n)^{1/2}$, $t = s_{\betab^*}$ and $d_n = s_{\betab^*}(\log p / n)^{1/2}$.
\end{proof}

\begin{proof}[Proof of Theorem \ref{normality_r}, \ref{cor:fixed:ci:b} and \ref{cor:uniform:ci:b}]

Due to the limit of space, we follow the line of the proof of Theorem   \ref{normality} but only give necessary details when the proof is different.
First, we observe that with a little abuse in notation
\[
\psi_i(\betab) = w_i^\top (\betab) R_i^r, \qquad R_i^r = q_i \psi (- v_i \varepsilon_i)
\]
thus it suffices to provide the asymptotic of 
\[
T_n^r:= \frac{1}{\sqrt{n}} \sum_{i=1}^n V_i^r =  \frac{1}{\sqrt{n}} \sum_{i=1}^n x_1 \ind\{ x_i \betab  >0\}  R_i^r.
\]
Moreover, observe that $R_i^r$ are necessarily bounded random variables (see Condition (r$\boldsymbol\Gamma$). Following similar steps as in Theorem \ref{normality} we obtain 
\[
\mbox{Var}(T_n^r) \geq  n  -2 \exp\{-n^2/2\} 
\]
 where in the last step we utilized Hoeffding's inequality for bounded random variables.
 
 Next, we focus on establishing an equivalent of Lemma 2 but now for the doubly 
robust estimator. Observe that 
 \begin{equation}\label{eq:lambda}
  n^{-1} \sum_{i=1}^n \EE_\varepsilon[\psi_i^r(\betab)]=n^{-1} \sum_{i=1}^n x_i^\top \ind\{ x_i \betab >0\} q_i 
  \EE_{\varepsilon} \biggl[ \psi \Bigl(- v_i x_i (\betab^* - \betab) - v_i \varepsilon_i  \Bigl) \biggl].
 \end{equation}
 Moreover, whenever $\psi '$ exists we have 
 \[
   \EE_{\varepsilon} \biggl[ \psi \Bigl(- v_i x_i (\betab^* - \betab) - v_i \varepsilon_i  \Bigl) \biggl]
   =
 - v_i x_i (\betab^*-\betab)   \int_{-\infty}^{\infty} \psi'(\xi(u)) f(u) du.
 \]
 for $\xi (u)= \alpha (-v_i x_i (\betab^*-\betab)) + (1-\alpha) (- v_i u)$ for some $\alpha \in (0,1)$. When $\psi'$ doesn't exist we can decompose $\psi$ into a finite sum of step functions and then apply  exactly  the same technique on each of the step functions as in Lemma 2.  Hence, it suffices to discuss the differentiable case only.
 Let us denote the RHS of \eqref{eq:lambda} with $\Lambda_n^r(\betab) (\betab^* - \betab)$, i.e.
 \[
 \Lambda_n^r(\betab) = n^{-1} \sum_{i=1}^n -  \ind\{x_i\betab>0\}q_i v_i x_i^\top x_i 
 \int_{-\infty}^\infty \psi'(\xi(u)) f(u) du.
 \]
 Next, we observe that by 
 Condition (r$\boldsymbol\Gamma$),
 \[
 \left|  \int_{-\infty}^\infty \psi'(\xi(u)) f(u) du - \psi'(v_i \varepsilon_i) \right| \leq \sup_x |\psi'(x) | := C_1
 \]
 for a constant $C_1 <\infty$. With that the remaining steps of Lemma 2 can be completed with $\Sigmab$ replaced with $\Sigmab^r$.
 
 Next, by observing the proofs of Lemmas 3, 4  and 5 we see that the proofs remain to hold under 
  Condition (r$\boldsymbol\Gamma$),
and with $W$ replaced with $\tilde W$. The constants $KK_{\gamma}$ appearing in the simpler case will now be $K K_{\theta} M_1 M_2$. However, the rates remain the same up to these constant changes.

Next, we discuss Lemma 6. 
For the case of doubly robust estimator $\nu_n(\deltab)
$ of Lemma 6 takes the following form
\[
\tilde \nu_n(\deltab) = n^{-1} \sum_{i=1}^n \tilde \Omegab(\deltab + \betab^*)
[f_i(\deltab) \tilde g_i(\deltab) - f_i(0) \tilde g_i(0)]
\]
with $\tilde g_i(\deltab) = \psi(v_i (x_i \deltab +  \varepsilon_i))$. 
Moreover, $\EE_\varepsilon[ f_i(\deltab) \tilde g_i(\deltab)] = f_i(\deltab) \EE_{\varepsilon} [\psi(v_i (x_i \deltab +  \varepsilon_i))] := \tilde w_i(\deltab)$.
We consider the same covering sequence as in Lemma 6. Then, we observe
\[
\left| \tilde w_i(\tilde \deltab_k) - \tilde w_i(0) \right| \leq C_1 | x_i \tilde \deltab_k |.
\]
Furthermore, $\EE_X [f_i(\tilde \deltab_k) \EE_{\varepsilon} [\psi(v_i (x_i \tilde\deltab_k +  \varepsilon_i)) - f_i(0) \EE_{\varepsilon} [\psi(v_i (  \varepsilon_i))]^2$ $ \leq C_1 M_1 M_2  $ \phantom{JECAJECA} $ \left( G_i(\tilde \deltab_k,\betab^*,0) - G_i(0,\betab^*,0) \right) \Lambda_{\max} (\Sigmab(\betab^*))$, providing the bound of $T_1$ equivalent to that of Lemma 6. 

Term $T_2$ can be handled similarly as in Lemma 6. We illustrate the particular differences only in $T_{21}$ as others follows similarly.
 
 Observe that 
 \[
 f_i(\deltab) \tilde g_i(\deltab) = \ind\{x_i \deltab \geq - x_i \betab^*\} \psi(v (\varepsilon_i)) +\ind\{x_i \deltab \geq - x_i \betab^*\}  v_i x_i \deltab\psi'(\xi_{\deltab})
 \]
 for $\xi_{\deltab} =  v_i \varepsilon_i + (1-\alpha) v_i x_i \deltab$ for some $\alpha \in (0,1)$.
 Next, we consider the decomposition
 \[
  f_i(\deltab) \tilde g_i(\deltab)  - \EE\left[  f_i(\deltab) \tilde g_i(\deltab) \right] = T_{211}^r(\deltab) +T_{212}^r(\deltab)
 \]
 for $T_{211}^r(\deltab)  = T_{2111}^r(\deltab) + T_{2112}^r(\deltab)  $ and
 \[
 T_{2111}^r(\deltab)  = \left(  \ind\{x_i \deltab \geq - x_i \betab^*\} - \PP (x_i \deltab \geq - x_i \betab^*) \right) \psi(v_i \varepsilon_i)
 \]
  \[
 T_{2112}^r(\deltab)  =  \PP (x_i \deltab \geq - x_i \betab^*)   \psi(v_i \varepsilon_i)
 \]
 and
 \[
 T_{212}^r(\deltab) = \ind\{x_i \deltab \geq - x_i \betab^*\}  v_i x_i \deltab\psi'(\xi_{\deltab})
 - \EE\left[\ind\{x_i \deltab \geq - x_i \betab^*\}  v_i x_i \deltab\psi'(\xi_{\deltab}) \right]
 \]
 Furthermore, we observe that the same techniques developed in Lemma 6 apply to both of the terms of $T_{211}^r(\deltab) $ hence we only discuss the case of $T_{212}^r(\deltab) $. We begin by considering the decomposition $T_{212}^r(\deltab) =T_{2121}^r(\deltab) +T_{2122}^r(\deltab) $ with
 \[
 T_{2121}^r(\deltab) = \ind\{x_i \deltab \geq - x_i \betab^*\}  v_i x_i \deltab \left( \psi'(\xi_{\deltab}) - \EE_{\varepsilon} (\psi'(\xi_{\deltab}))\right)
 \]
 and
 \[
  T_{2122}^r(\deltab) = \ind\{x_i \deltab \geq - x_i \betab^*\}  v_i x_i \deltab  \EE_{\varepsilon} (\psi'(\xi_{\deltab})) 
  -
   \EE\left[\ind\{x_i \deltab \geq - x_i \betab^*\}  v_i x_i \deltab\EE_{\varepsilon}\psi'(\xi_{\deltab}) \right]
 \]
 Let us focus on the last expression as it is the most difficult one to analyze. 
 Observe that we are interested in the difference 
 $T_{2122}^r(\deltab)  - T_{2122}^r(\tilde \deltab_k)  $. We decompose this difference into four terms, two related to random variables and two related to the expectations. We handle them separately and observe that because of symmetry and monotonicity of the indicator functions once we can bound the difference of random variables we can repeat the arguments for the expectations. Hence, we focus on 
 \[
 I_1 = \ind\{x_i \deltab \geq - x_i \betab^*\}  v_i x_i \deltab  \EE_{\varepsilon} (\psi'(\xi_{\deltab})) - \ind\{x_i \tilde \deltab_k \geq - x_i \betab^*\}  v_i x_i \tilde \deltab_k  \EE_{\varepsilon} (\psi'(\xi_{\tilde \deltab_k})) .
 \]
 First due to monotonicity of indicators and \eqref{eq:bound1} we have 
 \[
 |I_1| \leq I_{11} + I_{12} + I_{13}
 \]
 with
 \begin{align*}
 I_{11} &= \left( \ind\{x_i \tilde \deltab_k + \tilde L_n  \geq - x_i \betab^*\} -\ind\{x_i \tilde \deltab_k \geq - x_i \betab^*\}  \right) v_i x_i \tilde \deltab_k   \EE_{\varepsilon} (\psi'(\xi_{\tilde \deltab_k})) 
\\
I_{12} &=
  \ind\{x_i \tilde \deltab_k + \tilde L_n  \geq - x_i \betab^*\} \tilde L_n \EE_{\varepsilon} (\psi'(\xi_{  \deltab })) 
  \\
I_{13} &= \ind\{x_i \tilde \deltab_k + \tilde L_n  \geq - x_i \betab^*\} v_i x_i \tilde \deltab_k \left(  \EE_{\varepsilon} (\psi'(\xi_{ \deltab }))  -\EE_{\varepsilon} (\psi'(\xi_{\tilde \deltab_k}))  \right)
 \end{align*}
 As $\sup \psi' < \infty$, $I_{11}$ can be handled in the same manner as $T_{21}$ of the proof of Lemma 6 whereas $I_{12} = \Ocal_P ( \tilde L_n )$. For $I_{13}$ it suffices to discuss the difference at the end of the right hand side of its expression.
 However, it is not difficult to see that 
 \[
  \EE_{\varepsilon} (\psi'(\xi_{ \deltab }))  -\EE_{\varepsilon} (\psi'(\xi_{\tilde \deltab_k}))  \leq 4 C v_i  \tilde L_n \leq 4 C M_1 \tilde L_n
 \]
 with $C = \sup_x|\psi''(x)|$ for the case of twice differentiable $\psi$, $C =  \sup_{y} \partial/ \partial y | \int_{-\infty} ^y \psi ' (x) dx |$ for the case of once differentiable $\psi$ and $C = f_{\max}$ for the case of non-differentiable functions $\psi$.
Combining all the things together we observe that the rate of Lemma 6 for the case of doubly robust estimators is of the order of 
\[
C \rbr{
\sqrt{\frac{  (r_n M_3 \vee r_n^2K_1^2 M_1 M_2) t 
\log(n p /\delta)}{n}} 
\bigvee
\frac{  t \log(2n p /\delta)}{n} }.
\]
with $M_3 = \sup _{x}|\psi'(x)|$
 for once differentiable $\psi$ and $M_3=f_{\max}$ for non-differentiable $\psi$.
 
Now, with equivalents of Lemmas 1-6 are established, 
 we can use them to bound successive terms in the Bahadur representation much like those of Theorem 1. Details are ommitted due to space considerations.

For Theorem \ref{cor:uniform:ci:b}, the same line of the proof of Theorem \ref{cor:uniform:ci} applies, but only replace the matrix $\Sigmab$ with the matrix $\Sigmab^{\mbox{r}}$. The result of the Theorem then follows from the arguments in Remark \ref{remark4}. Uniformity of the obtained results is not compromised as the weight functions $q_i$ and $v_i$ only depend on the design matrix.
\end{proof}

 \section{Proofs of Lemmas}
 
 \begin{proof}[Proof of Lemma \ref{lemma0}]

Let $\{\tilde \deltab_k\}_{k
  \in [N_\delta]}$ be the centers of the balls of radius $r_n\xi_n$ that
cover the set $\Ccal(r_n,t)$. Such a cover can be constructed with
$N_\delta \leq {p \choose t} (3/\xi_n)^t $ \citep[see, for example  ][]{vdvaart_1998}. 
Furthermore, let  $\mathbb{D}_n(\deltab)=n^{-1} \sum_{i=1}^n \left[ \mu_i(\deltab) -  \EE [\mu_i(\deltab)] \right]$ and let
\begin{equation*}
\mathcal{B} (\tilde \deltab_k, r) = 
\cbr{
\deltab \in \RR^p : 
\norm{\tilde \deltab_k-\deltab}_2 \leq r
\ , \ 
{\mbox{supp}}(\deltab) \subseteq {\mbox{supp}}(\tilde \deltab_k)
}
\end{equation*}
be a ball of radius $r$ centered at $\tilde \deltab_k$ with  elements
that have the same support as $\tilde \deltab_k$. In what follows, we
will bound $ \sup_{\deltab \in
  \Ccal(r_n, t)} |\mathbb{D}_n(\deltab)|$ using an
$\epsilon$-net argument.  In particular, using the above introduced
notation, we have the following decomposition
\begin{equation}
\label{eq:proof:lem0_lin:t1t2_1}
\begin{aligned}
\sup_{\deltab \in \Ccal(r_n, t)}
| \mathbb{D}_n(\deltab)| &=  \max_{k \in [N_\delta]}
\sup_{\deltab \in \mathcal{B} (\tilde \deltab_k, r_n\xi_n)}
| \mathbb{D}_n( \deltab)|  \\
& \leq
\underbrace{
  \max_{k \in [N_\delta]}
 |\mathbb{D}_n( \tilde \deltab_k)|
  }_{T_1}
 + 
\underbrace{
  \max_{k \in [N_\delta]}
  \sup_{\deltab \in \mathcal{B} (\tilde \deltab_k, r_n\xi_n)}
  | \mathbb{D}_n( \deltab) -  \mathbb{D}_n( \tilde \deltab_k)|
}_{T_2}.
\end{aligned}
\end{equation}
%
%

We first bound the term $T_1$ in \eqref{eq:proof:lem0_lin:t1t2_1}.
  To that end, let
$
  Z_{ik}  = 
  \rbr{
     \mu_{i}( \tilde\deltab_k)   - 
      \EE \left[ \mu_{i}( \tilde\deltab_k)    \right]}.  
$

With a  little abuse of notation we use $l$ to denote the density of $x_i\betab^*$ for all $i$. Observe,  
$$\EE \left[\mu_i(  \deltab)    \right] = \PP\biggl( x_i \betab^* \leq x_i \deltab       \biggl).$$ Let $w_{i}(\deltab)  $  denote the probability on the right hand side of the previous equation, as a function of $\deltab$.
Then
$
  T_1  = 
    \max_{k \in [N_\delta]}
    \abr{
      n^{-1}\sum_{i\in[n]}  Z_{ik}  }   .
$
Note that $\EE[Z_{ik} ] = 0$ and 
\begin{align*}
  \Var&[Z_{ik} ] 
    =   
    w_{i}( \tilde\deltab_k)
    - w_{i}^2( \tilde\deltab_k)      \stackrel{(i)}{\leq} 
    w_{i}( \tilde\deltab_k)
      \stackrel{(ii)}{\leq}  
    \abr{
  x_i\tilde\deltab_k  
  }
  l_i\rbr{   c_ix_i \tilde\deltab_k    }
  \stackrel{(iii)}{\leq} 
 \abr{
   x_i \tilde\deltab_k  
  }
  K_1\quad \rbr{c_i  \in [0,1]}
\end{align*}
where $(i)$ follows by dropping a negative term, $(ii)$ follows by
the mean value theorem  and  $(iii)$ from the  Condition (E).  Hence,   we have that almost surely,
$|Z_{ik}| \leq C \max_{i}\abr{
   x_i \tilde\deltab_k  
  }$ for a constant $C<\infty$.
 For a
fixed $k$, Bernstein's inequality \citep[see, for
example, Section 2.2.2 of][]{vanderVaart1996Weak} gives us
\begin{align*}
  \abr{n^{-1}\sum_{i\in[n]} Z_{ik}}
  \leq C\rbr{
  \sqrt{
    \frac{f_{\max}\log(2/\delta)}{n^2}
    \sum_{i\in[n]} 
     \abr{
  x_i\tilde\deltab_k 
  }
  }
  \bigvee \frac{ \log(2/\delta) }{n}
  }
\end{align*}
with probability $1-\delta$. 
Observe that $\tilde\deltab_k^\top W(\betab^*+\tilde\deltab_k)^\top W(\betab^*+\tilde\deltab_k)\tilde\deltab_k \leq \tilde\deltab_k^\top W(\betab^*)^\top W(\betab^*)\tilde\deltab_k + \tilde\deltab_k^\top X^\top [\ind(X (\betab^* +\tilde \delta_k) \geq 0) - \ind(X \betab^* \geq 0)] X \tilde\deltab_k 
\leq \tilde\deltab_k^\top W(\betab^*)^\top W(\betab^*)\tilde\deltab_k + 2 \tilde\deltab_k^\top X^\top X \tilde\deltab_k  $.
Hence,
\begin{align*}
    \sum_{i\in[n]} 
    \abr{
  x_i\tilde\deltab_k  
  }
    & \leq C  ^2 \sqrt{n}\sqrt{\tilde\deltab_k^\top  W^\top (\betab^*+\tilde\deltab_k) W (\betab^*+\tilde\deltab_k)  \tilde\deltab_k} \leq 2 C ^2 r_n \sqrt{n}\left(\Lambda_{\max}^{1/2}(\Sigmab (\betab^*)) \vee1\right),
\end{align*}
where the line follows using the Cauchy-Schwartz inequality and
inequality (58a) of \citet{wainwright_2009} and Lemma \ref{lemma2}.  
Hence,
with probability $1-2\delta$ we have for all $\lambda_j \geq A \sqrt{\log p /n}$ that
\begin{align*}
  \abr{n^{-1}\sum_{i\in[n]}Z_{ik}}
  \leq C\rbr{
  \sqrt{
    \frac{ r_n\log(2/\delta)}{n}
  }
  \bigvee \frac{ \log(2/\delta)}{n} }.
\end{align*} 
 
Using the union bound over 
$k\in[N_\delta]$, with probability $1-2\delta$, we have
$$
T_{1} \leq C \rbr{
  \sqrt{
    \frac{ r_n \sqrt{t} \log(2  
      N_\delta   /\delta)}{n}
  } 
  \bigvee \frac{ \log(2 
      N_\delta  /\delta)}{n} }.
$$
Let us now focus on bounding $T_2$ term.   Let 
$
Q_{i}( \deltab) =   
         \mu_{i}(  \deltab )    -  \EE \mu_i(\deltab) .
$
For a fixed   $k$ we have
\begin{align*}  
  \sup_{\deltab \in \mathcal{B} (\tilde \deltab_k, r_n\xi_n)}
  \abr{   
       \mathbb{D}_n(  \deltab) -  \mathbb{D}_n( 
      \tilde\deltab_k) 
  }
    &\leq  \sup_{\deltab \in \mathcal{B}(\tilde \deltab_k, r_n\xi_n)}
  \abr{
    n^{-1}\sum_{i\in[n]}
    {Q}_{i}(\deltab)
    - {Q}_{i}(\tilde\deltab_k)}: = T_{21}.
\end{align*}
 
Let $Z_i=x_i \betab^*$.  Observe that the density of $Z_i$ is  by Condition ({\bf E}) very close to the distribution of  $\varepsilon_{i}$.
 Moreover,
 \[\abr{x_i(\deltab-\tilde\deltab_k)} \leq K
\norm{\deltab-\tilde\deltab_k}_2
\sqrt{\abr{\mbox{supp}(\deltab-\tilde\deltab_k)}}   
\]
where $K$ is a constant such that    $\max_{i,j}|x_{ij}| \leq K$.
Hence,  
\begin{align*}
 &\max_{k \in [N_\delta]}\max_{i\in[n]}
  \sup_{\deltab \in \mathcal{B}(\tilde \deltab_k, r_n\xi_n)}
  \abr{
     x_i  \deltab
    -
      x_i  \tilde\deltab_k
  } \leq 
    r_n\xi_n\sqrt{t} \max_{i,j}|x_{ij}|
  \leq C
    r_n\xi_n
    \sqrt{ t  }
 =: \tilde L_n,
\end{align*}
For $T_{21}$, we will use
the fact that $\ind\{a < x\}$ and $\PP\{Z < x\}$ are monotone functions in $x$. 
Therefore,
\begin{align*}
T_{21} 
&\leq 
n^{-1}\sum_{i\in[n]}\bigg[   
  \ind\cbr{Z_i \geq   x_i  \tilde\deltab_k - \tilde L_n }  -
  \PP\sbr{Z_i \geq  x_i  \tilde\deltab_k +\tilde L_n }  \bigg] \\
&\leq
n^{-1}\sum_{i\in[n]}\bigg[   
  \ind\cbr{Z_i \geq   x_i  \tilde\deltab_k - \tilde L_n} -
  \PP\sbr{Z_i \geq  x_i  \tilde\deltab_k - \tilde L_n  }  
\bigg] \\
&\ +
n^{-1}\sum_{i\in[n]}\bigg[   
  \PP\sbr{Z_i \geq  x_i  \tilde\deltab_k - \tilde L_n } -
  \PP\sbr{Z_i \leq  x_i  \tilde\deltab_k+ \tilde L_n }
\bigg].
\end{align*}
The first term in the display above can be bounded in a similar way to
$T_1$ by applying Bernstein's inequality and hence the details are
omitted. For the second term, we have a bound
$C \tilde L_n  $, since
$  \PP\sbr{Z_i \geq  x_i  \tilde\deltab_k - \tilde L_n} - \PP\sbr{Z_i \leq  x_i  \tilde\deltab_k+ \tilde L_n }  \leq 2C f_{\max} \tilde L_n  $, per Condition ({\bf E}).  
Therefore, with probability $1-2\delta$,
\begin{align*}
  T_{21} 
  \leq C\rbr{
  \sqrt{
    \frac{ \tilde L_n \log(2/\delta)}{n}
  }
  \bigvee \frac{\log(2/\delta)}{n} 
  \bigvee     \tilde L_n }.
\end{align*} 
 A bound on $T_2$ now follows using a union bound over  $k\in[N_\delta]$.
We can choose   $\xi_n = n^{-1}$, which gives us
$N_\delta \lesssim \rbr{pn^2}^t $. With these choices, we obtain 
$
T  \leq C  \rbr{
\sqrt{\frac{   r_n   t   \sqrt{t} 
\log(n p /\delta)}{n}} 
\bigvee
\frac{  t \log(2n p /\delta)}{n} },
$
which completes the proof.

\end{proof}

\begin{proof}[Proof of Lemma \ref{lemma1}]
We begin by rewriting the term $n^{-1}\sum_{i=1}^n\psi_i(\betab)$, and  aim  to represent it through indicator functions.  
Observe that 
\begin{align}\label{psi_beta}
n^{-1}\sum_{i=1}^n\psi_i(\betab) = n^{-1}\sum_{i=1}^n x_i^\top \ind(x_i\betab>0)[1-2\cdot\ind(y_i-x_i\betab<0)].
\end{align}
Using the fundamental theorem of calculus, we notice that if $x_i\betab^*>0$,
$\int_{x_i(\betab-\betab^*)}^0 f(\epsilon_i)d\varepsilon_i = F(0)-F(x_i(\betab-\betab^*))
= \frac{1}{2}-P(y_i<x_i\betab)$,
where $F$ is the univariate distribution of $\varepsilon_i$.  Therefore,  with expectation on $\varepsilon$, we can obtain an expression without the $y_i$.
\begin{align*}
n^{-1}\sum_{i=1}^n\EE_\varepsilon \psi_i(\betab) 
&= \left[n^{-1}\sum_{i=1}^n x_i^\top \ind(x_i\betab>0)\cdot2\int_{x_i(\betab-\betab^*)}^0 f(u)du \right]\\
&= \left[n^{-1}\sum_{i=1}^n x_i^\top \ind(x_i\betab>0)\cdot2f(u^*)x_i(\betab^*-\betab) \right]
:= \Lambda_n(\betab)(\betab^*-\betab),
\end{align*} 
for some $u^*$ between 0 and $x_i(\betab^*-\betab)$, and where we have defined 
$$\Lambda_n(\betab)  = \left[n^{-1}\sum_{i=1}^n\ind(x_i\betab>0) x_i^\top  x_i\cdot2f(u^*)\right].$$

We then show a bound for $\Delta := \left|\left[ \EE_X \Lambda_n(\betab)-2f(0) \Sigmab( {\betab}^*)\right]_{jk}\right|$, where we recall $\Sigmab(\betab^*)$ is defined as earlier, $\Sigmab( {\betab}^*)=n^{-1}\sum_{i=1}^n \EE_X \ind(x_i {\betab}^*>0)x_i^\top x_i$. By triangular inequality,
\begin{align}
\Delta & \leq \left| n^{-1}\sum_{i=1}^n\EE_X \ind(x_i\betab>0)x_{ij} x_{ik}\cdot 2(f(u^*)-f(0)) \right|\label{Delta1}\\
&\qquad +\left|n^{-1}\sum_{i=1}^n\EE_X \ind(x_i\betab>0)x_{ij} x_{ik}\cdot 2f(0)-n^{-1}\sum_{i=1}^n \EE_X \ind(x_i\betab^*>0)x_{ij} x_{ik}\cdot2f(0)\right|.\label{Delta2}
\end{align}
Notice that $\ind(x_i\betab>0)-\ind(x_i {\betab}^*>0)\leq\ind(x_i\betab\geq 2x_i {\betab}^*)=\ind[x_i {\betab}^*\leq x_i(\betab- {\betab}^*)] $. Moreover, the original expresion is also smaller than or equal to $\ind\left(|x_i {\betab}^*|\leq|x_i(\betab- {\betab}^*)|\right)$.
 The   term \eqref{Delta2} can be bounded by Condition {\bf{(X)}} and {\bf{(E)}},
\begin{align*}
\left|n^{-1}\sum_{i=1}^n\EE_X \ind(x_i\betab>0)x_{ij} x_{ik}\cdot2f(0)-n^{-1}\sum_{i=1}^n \EE_X \ind(x_i\betab^*>0)x_{ij} x_{ik}\cdot2f(0)\right|\\
\leq 2f(0) K ^2 n^{-1}\sum_{i=1}^n  \EE_X \ind\left(|x_i {\betab}^*|\leq \|x_i \|_\infty \|(\betab- {\betab}^*)\|_1\right) \leq 2 f(0) K^2 \|(\betab- {\betab}^*)\|_1. 
\end{align*} 
With the help of H\"older's inequality, 
$\left| \eqref{Delta1}  \right| 
 {\leq} n^{-1}\sum_{i=1}^n\EE_X \ind(x_i\betab>0)\|x_i\|_\infty^2\cdot2\left|f(u^*)-f(0)\right|  .$
By triangular inequality and Condition {\bf{(E)}} we can further upper bound the right hand side with
\begin{align*}
   2\cdot n^{-1}\sum_{i=1}^n\EE_X \|x_i\|_\infty^2\cdot L_0\|x_i\|_\infty\|\betab-\betab^*\|_1 .
\end{align*}
 Then we are ready to put terms together and obtain a bound for $\Delta$. Additionally, by Condition {\bf{(X)}} we have
$$
\Delta \stackrel{\text{}}{\leq} (C  +2 f(0))K ^3  \|\betab-\betab^*\|_1,
$$
for $\|\betab-\betab^*\|_1<\xi$ and a constant $C$. 
 Essentially, this proves that $\Delta$ is not greater than a constant multiple of the difference between $\betab$ and $\betab^*$.   Thus, we have as $n\rightarrow\infty$
\begin{equation}\label{E_psi}
n^{-1}\sum_{i=1}^n\EE\psi_i(\betab) = n^{-1}\sum_{i=1}^n\EE_X \EE_\varepsilon \psi_i(\betab) = 2f(0) \Sigmab( {\betab}^*)(\betab^*-\betab) + \mathcal{O}(\|\betab-\betab^*\|_1)(\betab^*-\betab).
\end{equation}
\end{proof}

\begin{proof}[Proof of Lemma \ref{cor:2}]

For the simplicity in notation we fix $j=1$ and denote $\hat \gammab_{(1)}(\hat \betab)$ with $\hat \gammab (\hat \betab)$.
The proof is composed of two steps: the first establishes a cone set and an event set of interest whereas the second proves  the rate of the estimation error by certain approximation results.

{\bf Step 1}. Here we show that the estimation error $\hat \gammab - \gammab^*$ belongs to the appropriate cone set with high probability. 
We introduce the loss function  $l(\betab,\gammab) = n^{-1} \sum_{i=1}^n \left( W_{i,1} (\betab) - W_{i,-1} (\betab) \gammab \right)^2$.
The loss function above is convex in $\gammab$ hence 
\[
(\hat \gammab - \gammab^*) \left[ \nabla_{\gammab} l(\hat \betab, \gammab)|_{\gammab=\hat \gammab} - \nabla_{\gammab} l(\hat \betab,\gammab )|_{\gammab= \gammab^*} \right] \geq 0.
\]
  Let $h^*= \left \| \nabla_{\gammab} l(\hat \betab,  \gammab)|_{\gammab=  \gammab^*} \right  \|_\infty $.
Let $\deltab= \hat \gammab - \gammab^*$.
KKT conditions provide $ \left(\nabla_{\gammab} l(\hat \betab,  \gammab)|_{\gammab=\gammab^* + \deltab}\right)_j = - \lambda_1 \mbox{sgn}(\gammab^*_j + \deltab_j)$ for all $j \in S_{1}^c \cap \{\hat \gammab_j \neq 0\}$ with $S_1 = \{j: \gammab^* \neq 0\}$.
Moreover, observe that $\deltab_j=0$ for all $j \in S_1^c \cap \{\hat \gammab_j = 0\}$.
Then,
\begin{align*}
&(\hat \gammab - \gammab^*) \left[ \nabla_{\gammab} l(\hat \betab,  \gammab)|_{\gammab=\hat \gammab} - \nabla_{\gammab} l(\hat \betab,\gammab )|_{\gammab= \gammab^*} \right] 
\\
&=
\sum_{j \in S_1^c} \deltab_j (\nabla_{\gammab} l(\hat \betab,  \gammab)|_{\gammab=\gammab^* + \deltab} )_j 
+
\sum_{j \in S_1} \deltab_j (\nabla_{\gammab} l(\hat \betab, \gammab)|_{\gammab=\gammab^* + \deltab} )_j 
+
\deltab^\top (-  \nabla_{\gammab} l(\hat \betab,\gammab )|_{\gammab= \gammab^*})
\\
&\leq 
\sum_{j \in S_1^c} \deltab_j (- \lambda_1 \mbox{sgn}(\gammab^*_j + \deltab_j))
+
\lambda_1 \sum_{j \in S_1} |\deltab_j|
+
h^* \|\deltab \|_1
\\
&=
\sum_{j \in S_1^c}  - \lambda_1 |\deltab_j|
 +
 \sum_{j \in S_1}   \lambda_1 |\deltab_j|
 +
 h^* \|\deltab_{S_1} \|_1
 +
 h^* \|\deltab_{S_1^c} \|_1
 \\
 &=
 (h^* - \lambda_1)  \|\deltab_{S_1^c} \|_1 + (\lambda_1 + h^*) \|\deltab_{S_1} \|_1.
\end{align*}
 
Hence on the event $h^* \leq (a-1)/ (a+1) \lambda_1 $ for a constant $a>1$, the estimation error $\deltab$ belongs to the cone set 
\begin{equation}\label{eq:cone}
\mathcal{C}(a,S_1)= \{ \mathbf x \in \RR^{p-1}: \|\xb_{S_1^c} \|_1 \leq a \|\xb_{S_1} \|_1\}
\end{equation}

Next, we proceed to show that the event above holds with high probability for certain choice of the tuning parameter $\lambda_1$.
We begin by decomposing 
 \[
 h^*
 \leq  
 \left \| \nabla_{\gammab} l(\betab^*,  \gammab)|_{\gammab=  \gammab^*} \right  \|_\infty  
 +
  \left \| \nabla_{\gammab} l(\betab^*,  \gammab)|_{\gammab=  \gammab^*}   
-  \nabla_{\gammab} l(\hat \betab,  \gammab)|_{\gammab=  \gammab^*} \right  \|_\infty  
 \]

Let $H_1= \nabla_{\gammab} l(\betab^*,  \gammab)|_{\gammab=  \gammab^*}$
and let $H_2= \nabla_{\gammab} l(\betab^*,  \gammab)|_{\gammab=  \gammab^*}   
-  \nabla_{\gammab} l(\hat \betab,  \gammab)|_{\gammab=  \gammab^*} $
We begin by observing that
$
\nabla_{\gammab} l(\hat \betab,  \gammab)|_{\gammab=  \gammab^*} 
=
  \nabla_{\gammab} l( \betab^*,  \gammab)|_{\gammab=  \gammab^*} 
  + \Delta_1 + \Delta_2 + \Delta_3 + \Delta_4
$, for
\begin{align*}
\Delta_1 
&= -2 n^{-1}  \left( W_{-1}(\hat \betab) - W_{-1}(\betab^*)\right)^\top W_{1}(\hat \betab)
\\
\Delta_2 
&= -2 n^{-1} \left( W_{-1} (\betab^*) \right)^\top \left( W_{1}(\hat \betab) - W_{1}(\betab^*)\right)
\\
\Delta_3 
&= -2 n^{-1} \left( W_{-1}(\hat \betab) \right)^\top \left( W_{-1}(\hat \betab) - W_{-1}(\betab^*)\right) \gammab^*
\\
\Delta_4
&= 2 n^{-1} \left( W_{-1}(\hat \betab) - W_{-1}(\betab^*)\right)^\top  W_{-1}(\betab^*)\gammab^*
\end{align*}

Next, by Lemma \ref{lemma0} we observe 
\[
| \Delta_{1,j} | \leq   2 K^2 n^{-1}  \left| \sum_{i=1}^n   \mu_i(\betab^*-\hat \betab ) - \mu_i(0)\right | = \ocal_P \left( K^2 r_n^{1/2} t^{3/4} (\log p / n)^{1/2} \bigvee K^2 t \log p / n \right) 
\]
 and similarly $|\Delta_{2,j}| =\ocal_P \left( K^2 r_n^{1/2} t^{3/4} (\log p / n)^{1/2} \bigvee K^2 t \log p / n \right)$.  
Recall that $s_1 = \|\gammab ^* \|_0$.  Let $K_{\gamma}$ be defined as $\|\gammab^*\|_\infty \leq K_{\gamma}$. Then, by H\"older's inequality
\begin{align*}
&| \Delta_{3,j} |\leq 2 K^2 K_\gamma n^{-1}  \left| \sum_{i=1}^n   \sum_{k \in S_1} |X_{ik}| \left[ \mu_i(\betab^*-\hat \betab) - \mu_i(0)\right] \right | \\
&\qquad = \ocal_P \left( K^3 K_\gamma r_n^{1/2} t^{3/4} s_1 (\log p / n)^{1/2} \bigvee K^3 K_\gamma t s_1 \log p / n \right)
\end{align*}
and similarly $| \Delta_{4,j} |=  \ocal_P \left( K^3 K_\gamma r_n^{1/2} t^{3/4} s_1 (\log p / n)^{1/2} \bigvee K^3 K_\gamma t s_1 \log p / n \right) $.
Putting all the terms together we obtain $$H_2 =  \ocal_P \left( K^2 (1 \vee K K_\gamma) r_n^{1/2} t^{3/4} s_1 (\log p / n)^{1/2} \bigvee K^2 (1 \vee K K_\gamma) t s_1 \log p / n \right)$$.

Next, we focus on the term $H_1$. Simple computation shows that for all $k =2,\cdots p$, we have
\[
H_{1,k} = - 2 n^{-1} \sum_{i=1}^n u_i
\]
for $u_i =X_{ik}  \zetab_{1,i}^* \ind\{ x_i \betab^* >0\} $. Observe that the sequence $\{u_i\}$ across $i=1,\cdots, n$,  is a sequence of independent random variables. As $\varepsilon_i$ and $x_i$ are independent we have by the tower property $\EE[r_i]= \EE_X \left[X_{ik} \ind\{ x_i \betab^* >0\}   \EE_\varepsilon[ \zetab_{1,i}^*] \right] =0$. Moreover,  as $\zetab^*_1$ is sub-exponential random vector, by Bernstein's inequality and union bound we have 
\[
P \left( \|H_1 \|_\infty \geq c  \right) \leq p \exp \left\{ -\frac{n}{2} \left(\frac{c^2}{\tilde K ^2} \vee \frac{c}{\tilde K} \right)  \right\}
\]
where $\|u_i \|_{\psi_1} \leq K \| \zetab_{1,i}^* \|_{\psi_1}  := \tilde K< \infty$. We pick $c$ to be $(\log p / n)^{1/2}$, then we have with probability converging to $1$ that 
\begin{align*}
h^* &\leq \|H_1\|_\infty + \|H_2\|_\infty \leq (\log p / n)^{1/2} + C_1 r_n^{1/2} t^{3/4} s_1 (\log p / n)^{1/2} + C_2 t s_1 \log p / n \\
&\leq (a-1)/(a+1) \lambda_1,
\end{align*} 
for some constant $C_1$ and $C_2$. Thus, with $\lambda_1$ chosen as 
$$\lambda_1 = C \left( (\log p / n)^{1/2} \bigvee \left(r_n^{1/2} \bigvee t^{1/4} (\log p / n)^{1/2} \right) t^{3/4} s_1(\log p / n)^{1/2}\right),$$ 
for some constant $C > 1$, we have that $h^* \leq (a-1)/(a+1) \lambda_1$ with probability converging to $1$. More directly, with the condition on the penalty parameter $\lambda_1$, this implies that the event for the cone set \eqref{eq:cone} to be true holds with high probability.

\vskip 5pt 
{\bf Step 2}.
We begin by a basic inequality
\[
l(\hat \betab ,\hat\gammab) + \lambda_1 \|\hat\gammab \|_1 \leq  l( \hat \betab ,  \gammab^*)  + \lambda_1 \| \gammab^*\|_1
\]
guaranteed as  $\hat\gammab$ minimizes the penalized loss \eqref{nodewise_lasso}.
Here and below in the rest of the proof we suppress the subscript $1$ and $\betab$ in the notation of $W_1(\hat \betab)$ and $W_{-1} (\hat \betab)$ and use $\hat W$ and $\hat W^{-}$ instead and  similarly $  W^*:= W_1(\betab^*)$ and ${W^{-}}^* =W_{-1}(\betab^*)$.
Rewriting the inequality above we obtain
\begin{align*}
&-2 n^{-1} \hat W^\top \hat{W^{-}} \hat \gammab + n^{-1} \hat \gammab^\top {\hat{W^{-}} }^\top \hat{W^{-}} \hat \gammab
\\
&\leq 
-2 n^{-1} \hat W^\top \hat{W^{-}} \gammab^*
 + n^{-1} {  \gammab^*}^\top {\hat{W^{-}} }^\top  \hat{W^{-}} \gammab^*
-\lambda_1 \|\hat\gammab \|_1 +\lambda_1 \| \gammab^* \|_1
\end{align*}
 
Observe that $W_{ij}(\hat \betab) = W_{ij}(\betab^*) + X_{ij} [\mu_i(\betab^*-\hat\betab) - \mu_i(0)]$. Let $\alpha_{ij} = X_{ij} [\mu_i(\betab^*-\hat\betab) - \mu_i(0)]$. 
Let $\Ab$ be a matrix such that $\Ab = \{ \alpha_{ij}\}_{1\leq i \leq n, 1 \leq j \leq p}$. From now on we only consider $\Ab$ to mean $\Ab_1$ and $\Ab^{-}$ to mean $\Ab_{-1}$.
Next, note that $W_i^* = {W_i^{-} }^* \gammab^* + \zeta_i^*$ by the node-wise plug-in lasso problem \eqref{eq:optimization_pop}.
Together with the above, we observe that then 
$\hat W_ i = {W_i^{-} }^* \gammab^* + \zeta_i^* + \Ab_i : = {W_i^{-} }^* \gammab^* + \varepsilon^*_i$. Hence,   the basic inequality above becomes,
\begin{align*}
&-2 n^{-1} \left( {W^{-} }^*   \gammab^* + \boldsymbol\varepsilon^* \right)^\top ( {W^{-}}^*  + \Ab^{-}) \hat \gammab + n^{-1} \hat \gammab^\top ( {W^{-}}^*  + \Ab^{-})^\top ( {W^{-}}^*  + \Ab^{-}) \hat \gammab
\\
&\leq 
-2 n^{-1} \left( {W^{-} }^* \gammab^* +\boldsymbol\varepsilon^* \right)^\top ( {W^{-}}^*  + \Ab^{-}) \gammab^* + n^{-1} \gammab^{*\top} ( {W^{-}}^*  + \Ab^{-})^\top ( {W^{-}}^*  + \Ab^{-})  \gammab^* \\
 &
-\lambda_1 \|\hat\gammab \|_1 +\lambda_1 \| \gammab^* \|_1.
\end{align*}
With reordering the terms in the inequality above, we obtain
\begin{align*}
&  n^{-1} \left\| {W^{-} }^* \hat \gammab - {W^{-} }^* \gammab^*\right\|_2^2 
 \leq 
 \delta_1 + \delta_2 + \delta_3  
-\lambda_1 \|\hat\gammab \|_1 +\lambda_1 \| \gammab^* \|_1,
\\[2ex]
\mbox{for} \qquad &\delta_1=2 n^{-1} \varepsilon_1^{*\top} \left({W^{-}}^* + \Ab^- \right) \left( \hat \gammab - \gammab^*\right), &\\
&\delta_2 =  2 n^{-1} \gammab^{*\top}  {W^{-}}^{*\top} \Ab^- \left( \hat \gammab - \gammab^*\right),&\\
&\delta_3 = n^{-1}  \left( \gammab^* + \hat \gammab \right)^\top \left( \Ab^{-\top} \Ab^- + 2W^{-*\top} A^-\right)  \left( \gammab^* - \hat \gammab \right).&
\end{align*}
 
Next, we observe that $A_i$ are bounded, mean zero random variables and hence $n^{-1}| \sum_{i=1}^n A_i | = \Ocal_P(n^{-1/2})$. Moreover $\varepsilon_i^*$ is a sum of sub-exponential and bounded random variables, hence is sub-exponential. 
Thus, utilizing the above and results of Step 1 we obtain
$$
\delta_1 \leq K^2 (a+1) \| \hat \gammab_{S_1} - \gammab^*_{S_1} \|_{1} \Ocal_P(n^{-1/2}),
$$
$$ \delta_2 \leq K^2 (a+1)  \| \hat \gammab_{S_1} - \gammab^*_{S_1} \|_{1}  \|\gammab^*_{S_1} \|_{1} \Ocal_P(n^{-1/2}),$$ 
 
Lastly,  observe that 
\begin{align}
\delta_3 \leq n^{-1} {\gammab^*}^\top \left( \Ab^{-\top} \Ab^- + 2W^{-*\top} A^-\right)   \gammab^*  + n^{-1} {\hat \gammab }^\top \left( \Ab^{-\top} \Ab^- + 2W^{-*\top} A^-\right) \hat   \gammab 
\end{align}
Moreover, as $\hat\gammab - \gammab^*$ belongs to the cone $C(a,S_1)$ \eqref{eq:cone} by Step 1, by convexity arguments it is easy to see that $\hat \gammab$ belongs to the same cone. Together with H\"older's inequality we obtain
\[
\delta_3 \leq 3 K n^{-1} \sum_{i=1}^n  { W_{i,S_1}^{-*}} ^\top  {\Ab_{i,S_1}^{-}} \left[ \|  \gammab^* _{S_1}\|_2^2+ \|\hat\gammab_{S_1} \|_2^2\right]
\]
Utilizing Lemma \ref{lemma0} now provides 
\[
\delta_3 \leq \kappa\left[ \|  \gammab^* _{S_1}\|_2^2+ \|\hat\gammab_{S_1} \|_2^2\right]
\]
where $\kappa$ is such that $\kappa = \Ocal_P ( K^3 r_n^{1/2} t^{3/4} s_1 (\log p /n)^{1/2} )$.
Moreover, observe that if $\lambda_1$ is chosen to be larger than the upper bound of $\kappa$. Putting all the terms together we obtain
\begin{align*}
 n^{-1} \sum_{i=1}^n \left( {W_i^{-} }^* \hat \gammab - {W_i^{-} }^* \gammab^*\right)^2 
 &\leq 
2  \lambda_1 \| \hat \gammab_{S_1} - \gammab^*_{S_1} \|_{1}  
+ \lambda_1 \|  \gammab^*_{S_1} \|_2^2 +  \lambda_1 \| \hat  \gammab _{S_1} \|_2^2 -\lambda_1 \|\hat\gammab \|_1 +\lambda_1 \| \gammab^* \|_1\\
&\leq
3  \lambda_1 \| \hat \gammab_{S_1} - \gammab^*_{S_1} \|_{1}  
+ \lambda_1 \|  \gammab^*_{S_1} \|_2^2 +  \lambda_1 \| \hat  \gammab _{S_1} \|_2^2  
\end{align*}
where the last inequality holds as $|\hat \gamma_j - \gamma_j^* | + |\gamma_j^*| - |\hat \gamma_j|$  for $j \in S_{1}$.

Moreover, by Condition {\bf (C)} and Step 1 we have that the left hand side   is bigger than or equal to $C_2  n^{-1} \sum_{i=1}^n \left( {X_i^{-} }  \hat \gammab - {X_i^{-} }  \gammab^*\right)^2 
 $, allowing us to conclude
\begin{align}\label{eq:temp11}
& 
n^{-1} C_2  \left\| X   (\hat \gammab - \gammab^*) \right\|_2^2
 \leq 
3  \lambda_1 \| \hat \gammab_{S_1} - \gammab^*_{S_1} \|_{1}  
+2 \lambda_1 \|  \gammab^*_{S_1} \|_2^2 +  \lambda_1 \|  \hat \gammab_{S_1} - \gammab^*_{S_1}\|_2^2  
\end{align} 
holds with probability approaching one.
Let $S = S_{\betab^*}$ for short.
 Condition {\bf ($\Gammab$)} and {\bf(CC)} together imply that now we have 
 \[
 (\phi_0^2 C_2 -\lambda_1)\|\hat \gammab_{S_1} - \gammab^*_{S_1}\|_2^2 \leq 3 \sqrt{s_{1}}  \lambda_1 \| \hat \gammab_{S_1} - \gammab^*_{S_1} \|_{2} + 2\lambda_1 \|  \gammab^*_{S_1} \|_2^2.
 \]
 Solving for $\| \hat \gammab_{S_1} - \gammab^*_{S_1} \|_{2}$ in the above inequality we obtain
 \[
 \| \hat \gammab_{S_1} - \gammab^*_{S_1} \|_{2} \leq 3 \sqrt{s_{1}} \lambda_1 /  ( \phi_0^2 C_2 -\lambda_1) + 2\sqrt{2 s_1} \lambda_1 K_{\gamma} /  ( \phi_0^2 C_2 -\lambda_1).
 \]
  
  The result then follows from a simple norm inequality 
\begin{align*}
 \| \hat \gammab - \gammab^* \|_{1}   \leq (a + 1) \| \hat \gammab_{S_1}- \gammab^*_{S_1} \|_{1} \leq (a + 1)\sqrt{s_1} \| \hat \gammab_{S_1} - \gammab^*_{S_1} \|_{2}
\end{align*}
and considering an asymptotic regime with $n,p,s_{\betab^*},s_1 \to \infty$.

\end{proof}

\begin{proof} [Proof of Lemma \ref{lem:temp1} ]
Recall the definitions of $\hat{\zetab}_j$ and $\zetab_j^*$.
Observe that  we have the following inequality,
\begin{align*}
\left| \hat{\zetab}_j^\top \hat{\zetab}_j /n- \EE {\zetab_j^*}^\top  \zetab_j^* /n\right|  &\leq \left|n^{-1}\hat{\zetab}_j^\top \hat{\zetab}_j -n^{-1}{\zetab_j^*}^\top \zetab_j^*\right|+\left|n^{-1}{\zetab_j^*}^\top \zetab_j^*-n^{-1}\EE{\zetab_j^*}^\top \zetab_j^*\right|\\
&\leq n^{-1}\left\|{\hat{\zetab}}_j +\zetab_j^*\right\|_\infty\left\|\hat{\zetab}_j -\zetab_j^*\right\|_1+\left|n^{-1}{\zetab_j^*}^\top \zetab_j^*-n^{-1}\EE{\zetab_j^*}^\top \zetab_j^*\right|,
\end{align*}
using triangular inequality and H\"older's inequality. 

We proceed to upper bound all of the three terms on the right hand side of the previous inequality.
First, we observe 
\begin{align}\label{zetahat+zeta}
\left\|\hat{\zetab}_j +\zetab_j^*\right\|_\infty   
&\leq \left\|W_{j}(\betab^*) - W_{-j}(\betab^*)\gammab_{(j)}^*(\betab^*) \right\|_\infty +\left\|W_{j}(\hat{\betab}) - W_{-j}(\hat\betab) \hat \gammab_{(j)} (\hat{\betab}) \right\|_\infty .
\end{align}
Moreover, the conditions imply that $\| W_{j}(\hat{\betab})\|_\infty \leq K  $ (by the Condition {\bf (X)}),  

$$\|W_{-j}\hat\gammab_{(j)} (\hat{\betab}) \|_\infty \leq K \left( \| \hat \gammab_{(j)}  (\hat{\betab})  - \gammab_{(j)}^*( {\betab}^*) \|_1+ \|\gammab_{(j)}^*( {\betab}^*) \|_1  \right) $$
 and by Lemma  \ref{cor:2}, for $\lambda_j$ as defined, the right hand size is $\mathcal{O}_P \left( K K_\gamma s_j (\lambda_j \vee 1) \right)$.
 Thus, we conclude $\left\|\hat{\zetab}_j +\zetab_j^*\right\|_\infty=\mathcal{O}_P\biggl( K(1+s_j) \bigvee K K_\gamma s_j (\lambda_j \vee 1) \biggl) = \ocal_P \left( K (1 \vee K_\gamma \vee K_\gamma \lambda_j) s_j \right)$. 
 
Its multiplying term can be decomposed as following
\begin{align}\label{zetahat-zeta}
n^{-1}\left\|\hat{\zetab}_j -\zetab_j^*\right\|_1  &\leq \underbrace{n^{-1}\left\|X_j\circ\left(\ind(X\hat{\betab}>0)-\ind(X\betab^*>0)\right)\right\|_1}_{\rmnum{1}} \nonumber\\
&\qquad +\underbrace{n^{-1}\left\| W_{-j}(\hat\betab) \hat \gammab_{(j)}(\hat{\betab}) - W_{-j}(\betab^*)\gammab_{(j)}^*(\betab^*)\right\|_1}_{\rmnum{2}},
\end{align}
where $\circ$ denotes entry wise multiplication between two vectors. The reason we have to spend such a great effort in separating the terms to bound this quantity is that we are dealing with a $1$-norm here, rather than an infinity-norm, which is bounded easily.  

We start with term $\rmnum{1}$. Notice that
\begin{align*}
n^{-1}\left\|X_j\circ\left(\ind(X\hat{\betab}>0)-\ind(X\betab^*>0)\right)\right\|_1 \leq Kn^{-1} \sum_{i=1}^n \left |\ind(x_i\hat{\betab}>0)-\ind(x_i\betab^*>0)\right |,
\end{align*}
by H\"older's inequality and Condition {\bf (X)}. 
 Moreover, by Lemma \ref{lemma0} we can easily bound the term above with $\ocal_P \left( K r_n^{1/2} t^{3/4} (\log p / n)^{1/2} \bigvee K t \log p / n \right)$, with $r_n$ and $t$ as defined in Condition {\bf{(I)}}.

 For the term $\rmnum{2}$, we have  
\begin{align*}
 \rmnum{2} \leq &  n^{-1}\left\|X_{-j} \hat\gammab_{(j)}(\hat{\betab})\circ \ind(X\hat{\betab}>0) - X_{-j}\gammab_{(j)}^*(\betab^*)\circ \ind(X\hat{\betab}>0) \right\|_1 \\
 & + n^{-1}\left\|X_{-j}\gammab_{(j)}^*(\betab^*)\circ \ind(X\hat\betab>0) - X_{-j}\gammab_{(j)}^*(\betab^*)\circ\ind(X\betab^*>0)\right\|_1.
 \end{align*}
 Observe, that the right hand side is upper bounded with
\begin{align*}
& K \left\| \hat\gammab_{(j)}(\hat{\betab})-\gammab_{(j)}^*(\betab^*) \right\|_1 \left\| \ind(X\hat{\betab}>0)\right\|_\infty \\ 
 &+\left\|X_{-j}\gammab_{(j)}^*(\betab^*) \right\|_\infty  \left | n^{-1} \sum_{i=1}^n  \left[ \ind(x_i\hat{\betab}>0) -  \ind(x_i{\betab}^*>0) \right]\right |  \end{align*}
  by Condition {(\bf X)}. Utilizing Lemma \ref{lemma0}, Lemma \ref{cor:2} and Condition {($\boldsymbol \Gamma$)}
  together   we obtain 
  $$\rmnum{2} =   \ocal_P\left( K K_\gamma s_j \lambda_j \right) +  \ocal_P \left( K K_\gamma r_n^{1/2} t^{3/4} s_j (\log p / n)^{1/2} \bigvee K K_\gamma t s_j \log p / n \right), $$
 for the chosen $\lambda_j$. Combining bounds for the terms $\rmnum{1}$ and $\rmnum{2}$, we obtain
 \[
 n^{-1}\left\|\hat{\zetab}_j -\zetab_j^*\right\|_1 = \ocal_P\left( K K_\gamma s_j \lambda_j \bigvee K K_\gamma r_n^{1/2} t^{3/4} s_j (\log p / n)^{1/2} \bigvee K K_\gamma t s_j \log p / n \right)
 \]
 
Next, we bound $\left|n^{-1}{\zetab_j^*}^\top \zetab_j^*-n^{-1}\EE{\zetab_j^*}^\top \zetab_j^*\right|$. If we rewrite the inner product in summation form, we have
$
\left|n^{-1}{\zetab_j^*}^\top \zetab_j^*-n^{-1}\EE{\zetab_j^*}^\top \zetab_j^*\right| = n^{-1}\sum_{i=1}^{n}\left({\zeta_{ij}^*}^2 - \EE{\zeta_{ij}^*}^2\right).
$
Notice that $\zeta_{ij}^* = W_{ij}(\betab^*) - W_{i,-j}\gammab_{(j)}^*(\betab^*)$ is a bounded random variable and such that $|\zeta_{ij}^* | = \mathcal{O}_P (K  (1+s_j) )$. We then apply Hoeffding's inequality for bounded random variables,  to obtain  $\left|n^{-1}{\zetab_j^*}^\top \zetab_j^*-n^{-1}\EE{\zetab_j^*}^\top \zetab_j^*\right|= O_P( K^2 (1 + s_j)^2n^{-1/2}).$
\end{proof}

\begin{proof}[Proof of Lemma   \ref{lemma2} ]

We begin by first establishing that $\hat \tau_j^{-2} = \Ocal_P(1)$. 
 In the case when the penalty part $\lambda_j\left\|\hat{\gammab}_{(j)}(\hat{\betab})\right\|_1$ happens to be $0$,  which means $\hat{\gammab}_{(j)}(\hat{\betab})=0$, the worst case scenario is that the regression part,
 $ n^{-1}\left\|W_{j}(\hat{\betab}) - W_{-j}(\hat{\betab})\hat{\gammab}_{(j)}(\hat{\betab})\right\|_2^2,$ 
  also results in $0$, i.e.
\begin{align}\label{eq:tau_zero}
0 
&= W_{j}(\hat{\betab}) - W_{-j}(\hat{\betab})\hat{\gammab}_{(j)}(\hat{\betab})
\end{align}
We show that these terms cannot be equal to zero simultaneously, since this forces $W_j(\hat\betab) = 0$, which is not true. Thus, $\hat \tau_j^{-2}$ is bounded away from 0.

In order to show results about the matrices $\Omega(\hat{\betab})$ and $\Omega(\betab^*)$, we first  provide a bound on the $\hat{\tau}$ and $\tau$. This is critical, since the magnitude of $\Omega(\cdot)$ is determined by $\tau$. To derive the bound on the $\tau$'s, we have to decompose the terms very carefully and put a bound on each one of them. 

Recall definitions of $\hat{\zetab}_j$ and $\zetab_j^*$ in \eqref{eqn:zeta*} we have 
  \begin{align*}
\hat{\zetab}_j = 
 W_{j}(\hat{\betab}) - W_{-j}(\hat{\betab}) \hat \gammab_{(j)} (\hat{\betab}), \qquad \zetab_j^*=W_{j}(\betab^*) - W_{-j}(\betab^*)\gammab_{(j)}^*(\betab^*).
\end{align*}
 Moreover, by the Karush-Kuhn-Tucker conditions of problem \eqref{nodewise_lasso}  we have
 $\lambda_j \| \hat \gammab_{(j)} (\hat{\betab})\|_1 = n^{-1} {\hat\zetab_j}^\top W_{-j}(\hat{\betab}) \hat \gammab (\hat \betab)  $,
  which in turn enables  a representation 
 \[
\hat  \tau_j^2=n^{-1} {\hat\zetab_j}^\top \hat\zetab_j +n^{-1} {\hat\zetab_j}^\top W_{-j}(\hat{\betab}) \hat \gammab (\hat \betab) .
 \]
By definition  we have that 
$\tau_j^2=n^{-1}\EE{\zetab_j^*}^\top \zetab_j^* ,$
for which we have $\hat{\tau}_j^2$ as an estimate. 
The $\tau_j^2$ and $\hat{\tau}_j^2$ carry information about the magnitude of the values in $\Sigmab^{-1}(\betab^*)$ and $\Omega(\hat{\betab})$ respectively.  We  next   break down $\tau_j^2$ and $\hat{\tau}_j^2$ into parts related to difference between $\hat{\gammab}_{(j)}(\hat{\betab})$ and $\gammab_{(j)}^*({\betab}^*)$, which we know how to control.
Thus, we have the following decomposition,
 \begin{align*}
\left|\hat{\tau}_j^2-\tau_j^2\right| 
&\leq \underbrace{\left|n^{-1}\hat{\zetab}_j^\top \hat{\zetab}_j -\tau_j^2\right|}_{\Rmnum{1}}
+\underbrace{\left|n^{-1}\hat{\zetab}_j^\top W_{-j}(\hat{\betab}) \hat\gammab_{(j)} ( \hat{\betab} )\right|}_{\Rmnum{2}} .
\end{align*}

The task now boils down to bounding each one of the   terms $\Rmnum{1}$ and $\Rmnum{2}$ ,   independently.   
Term $\Rmnum{1}$ is now bounded  by Lemma  \ref{lem:temp1} and is in order of 
$$\ocal_P\left( K^2 K_\gamma s_j^2 (1 \vee K_\gamma \vee K_\gamma \lambda_j) (\lambda_j \vee r_n^{1/2} t^{3/4} (\log p / n)^{1/2} \vee t \log p / n) \right).$$  
Regarding term $\Rmnum{2}$, we first point out one result due to the Karush-Kuhn-Tucker conditions of (6), 
\begin{align}\label{nodewise_lasso_kkt}
\lambda_j\cdot 1^\top  &\geq \lambda_j \mbox{sign}\left(\hat \gammab_{(j)} (\hat{\betab})\right)^\top = n^{-1}\left(W_{j}(\hat{\betab}) -W_{-j}(\hat{\betab}) \hat\gammab_{(j)} (\hat{\betab})\right)^\top  W_{-j}(\hat{\betab}) = n^{-1}\hat{\zetab}_j^\top W_{-j}(\hat{\betab})\nonumber.
\end{align}
 For the term $\Rmnum{2}$, we then have 
\begin{align*}
\left|n^{-1}\hat{\zetab}_j^\top W_{-j}(\hat{\betab}) \hat \gammab_{(j)} (\hat{\betab})\right| &\leq \left\|n^{-1}\hat{\zetab}_j^\top W_{-j}(\hat{\betab})\right\|_\infty \left\|\hat \gammab_{(j)} (\hat{\betab})\right\|_1 = \ocal_P \left( s_j \lambda_j \vee s_j \lambda_j^2 \right),
\end{align*}
since by Lemma \ref{cor:2} we have 
$$\left\| \hat \gammab_{(j)} (\hat  \betab)\right\|_1  \leq   \left\|\gammab_{(j)}^*(\betab^*)\right\|_1 +  \left\|\hat \gammab_{(j)} (\hat  \betab)-\gammab_{(j)}^*(\betab^*)\right\|_1=\mathcal{O}_P(s_j) +  \ocal_P(s_j \lambda_j).$$
 Putting all the pieces together, we have shown that rate
\begin{align*}
&\left|\hat{\tau}_j^2-\tau_j^2\right| = \ocal_P\left( K^2 K_\gamma s_j^2 (1 \vee K_\gamma \vee K_\gamma \lambda_j) (\lambda_j \vee r_n^{1/2} t^{3/4} (\log p / n)^{1/2} \vee t \log p / n) \right).
\end{align*}    

As $\hat \tau_j^{-2} = \Ocal_P(1)$ we have 
$ \left|\frac{1}{\hat{\tau}_j^2}-\frac{1}{\tau_j^2}\right| =  \ocal_P \left( \left|{\tau_j^2-\hat{\tau}_j^2} \right| \right) .$ 
We then conclude  
\begin{align*}
\left\|\Omegab(\hat{\betab})_j-\Sigmab^{-1}(\betab^*)_j\right\|_1  
&\leq \hat{\tau}_j^{-2}{\left\|\hat{\gammab}_{(j)}(\hat{\betab})-\gammab_{(j)}^*( {\betab}^*)\right\|_1}{} +\left\|\gammab_{(j)}^*(\betab^*)\right\|_1\left|\frac{1}{\hat{\tau}_j^2}-\frac{1}{\tau_j^2}\right|
\\
&= \ocal_P\left( K^2 K_\gamma^2 s_j^3 (1 \vee K_\gamma \vee K_\gamma \lambda_j) (\lambda_j \vee r_n^{1/2} t^{3/4} (\log p / n)^{1/2} \vee t \log p / n) \right).
\end{align*}
\end{proof}
 
\begin{proof} [Proof of Lemma \ref{lemma4}]
For the simplicity of the proof we  introduce some additional notation.
Let $\deltab = \hat \betab - \betab^*$, and
\[
\nu_n(\deltab) = n^{-1} \sum_{i=1}^n \Omegab(\hat \betab) \left[ \psi _i (\hat \betab) -  \psi_i(\betab^*)\right].
\]
Observe that
$
\ind \left\{ y_i - x_i \hat \betab \leq  0\right\} = \ind \left\{ x_i \deltab  \geq  \varepsilon_i\right\}
$
and hence 
$1 -2 \ind\{y_i - x_i \hat \betab > 0 \} = 2\ind\left\{y_i - x_i \hat \betab \leq 0 \right\} -1$.
The term we wish to bound then can be expressed as 
\[
\mathbb{V}_n(\deltab)=\nu_n(\deltab) - \EE \nu_n(\deltab)
\]
for $\nu_n(\deltab)$ denoting the following quantity
\[
\nu_n(\deltab) = n^{-1} \sum_{i=1}^n \Omegab(\deltab +   \betab^*)  X_i \left[ f_i(\deltab) g_i(\deltab) - f_i(\zero) g_i(\zero)\right]
\]
and 
\[
f_i(\deltab) = \ind  \left\{ x_i \deltab \geq - x_i \betab^*\right\} , \qquad g_i(\deltab) = 2 \ind  \left\{    x_i \deltab \geq \varepsilon_i\right\}  -1.
\]

Let $\{\tilde \deltab_k\}_{k
  \in [N_\delta]}$ be centers of the balls of radius $r_n\xi_n$ that
cover the set $\Ccal(r_n,t)$. Such a cover can be constructed with
$N_\delta \leq {p \choose t} (3/\xi_n)^t $ \citep[see, for example  ][]{vdvaart_1998}. 
Furthermore, let 
\begin{equation*}
\mathcal{B} (\tilde \deltab_k, r) = 
\cbr{
\deltab \in \RR^p : 
\norm{\tilde \deltab_k-\deltab}_2 \leq r
\ , \ 
{\mbox{supp}}(\deltab) \subseteq {\mbox{supp}}(\tilde \deltab_k)
}
\end{equation*}
be a ball of radius $r$ centered at $\tilde \deltab_k$ with  elements
that have the same support as $\tilde \deltab_k$. In what follows, we
will bound $ \sup_{\deltab \in
  \Ccal(r_n, t)} \norm{ \mathbb{V}_n(\deltab)}_\infty$ using an
$\epsilon$-net argument.  In particular, using the above introduced
notation, we have the following decomposition
\begin{equation}
\label{eq:proof:lem_lin:t1t2}
\begin{aligned}
&
\sup_{\deltab \in \Ccal(r_n, t)}
\norm{ \mathbb{V}_n(\deltab)}_\infty  =  \max_{k \in [N_\delta]}
\sup_{\deltab \in \mathcal{B} (\tilde \deltab_k, r_n\xi_n)}
\norm{ \mathbb{V}_n( \deltab)}_\infty \\
& \leq
\underbrace{
  \max_{k \in [N_\delta]}
  \norm{\mathbb{V}_n( \tilde \deltab_k)}_\infty 
  }_{T_1}
 + 
\underbrace{
  \max_{k \in [N_\delta]}
  \sup_{\deltab \in \mathcal{B} (\tilde \deltab_k, r_n\xi_n)}
  \norm{ \mathbb{V}_n( \deltab) -  \mathbb{V}_n( \tilde \deltab_k)}_\infty
}_{T_2}.
\end{aligned}
\end{equation}

Observe that the term $T_1$ arises from discretization of the sets
 $ \Ccal(r_n, t)$. To control it, we will
apply the tail bounds for each fixed $l$ and $k$. The term $T_2$
captures the deviation of the process in a small neighborhood around
the fixed center   $\tilde\deltab_k$. For those deviations we will provide  covering number arguments. In the
remainder of the proof, we provide details for bounding $T_1$ and
$T_2$.

We first bound the term $T_1$ in \eqref{eq:proof:lem_lin:t1t2}.
Let $a_{ij}(\betab) = \eb_j^\top  \Omegab(  \betab) \Xb_i$.
We are going to decouple dependence on $X_i$ and $\varepsilon_i$. To that end, let
\begin{align*}
  Z_{ijk}  &= a_{ij}(\betab^* + \tilde\deltab_k)
  \rbr{
    \rbr{
      f_i( \tilde\deltab_k) g_i(\tilde\deltab_k) - 
      \EE \left[ f_i( \tilde\deltab_k) g_i(\tilde\deltab_k)  | X_i \right]}
    -     \rbr{
       f_i( \zero) g_i(\zero) - 
       \EE \left[ f_i( \zero) g_i(\zero)  | X_i \right]}}
  \\
\intertext{and}
  \tilde Z_{ijk}  &= 
    a_{ij}(\betab^* + \tilde\deltab_k)
    \rbr{
     \EE \left[ f_i( \tilde\deltab_k) g_i(\tilde\deltab_k)  | X_i \right]
      -
     \EE \left[ f_i( \zero) g_i(\zero)  | X_i \right]
    }
    \\
    &-
    \EE\sbr{
         a_{ij}(\betab^* + \tilde\deltab_k)
    \rbr{
      f_i( \tilde\deltab_k) g_i(\tilde\deltab_k) -
     f_i( \zero) g_i(\zero)  }
    }.
\end{align*}

With a  little abuse of notation we use $f$ to denote the density of $\varepsilon_i$ for all $i$. Observe that   $\EE \left[ f_i(  \deltab) g_i( \deltab)  | X_i \right] =f_i(  \deltab) \PP(\varepsilon_i \leq X_i  \deltab)$. We use $w_i(\deltab)$ to denote the right hand side of the previous equation.

Then
\begin{align*}
  T_1 &= 
    \max_{k \in [N_\delta]}
    \max_{j \in [p]}
    \abr{
      n^{-1}\sum_{i\in[n]} \rbr{Z_{ijk} + \tilde Z_{ijk}}} \leq 
    \underbrace{
    \max_{k \in [N_\delta]}
    \max_{j \in [p]}
    \abr{
      n^{-1}\sum_{i\in[n]} Z_{ijk}
    } }_{T_{11}}  +
    \underbrace{
    \max_{k \in [N_\delta]}
    \max_{j \in [p]}    
    \abr{       n^{-1}\sum_{i\in[n]}  \tilde Z_{ijk} 
    }}_{T_{12}}.
\end{align*}
Note that $\EE[Z_{ijk} \mid \{\Xb_i\}_{i\in[n]}] = 0$ and 
\begin{align*}
  \Var&[Z_{ijk} \mid \{\Xb_i\}_{i\in[n]} ]\\
   &=      a_{ij}^2(\betab^* + \tilde\deltab_k)
  \Big(
    w_i( \tilde\deltab_k)
    - w_i^2( \tilde\deltab_k)  + 
    w_i(  \zero)
    - w_i^2(  \zero)    
    \\
    & \qquad\qquad\qquad\qquad\qquad\qquad-
    2 \rbr{w_i( \zero) \vee w_i(\tilde\deltab_k)}
    + 
    2 w_i\rbr{  \tilde\deltab_k}
      w_i\rbr{ \zero}    
  \Big) \\
  & \stackrel{(i)}{\leq}
a_{ij}^2(\betab^* + \tilde\deltab_k)
  \rbr{
    w_i( \tilde\deltab_k)
    +
    w_i( \zero)
    -
    2 \rbr{w_i(  \zero) \vee w_i( \tilde\deltab_k)}
  }
 \\
  & \stackrel{(ii)}{\leq}
a_{ij}^2(\betab^* + \tilde\deltab_k)
  f_i( \tilde \deltab_k)   \abr{
  x_i  \tilde\deltab_k
  }
  f\rbr{  \eta_ix_i  \tilde\deltab_k}
  \quad \rbr{\eta_i \in [0,1]} \\
  & \stackrel{(iii)}{\leq}
a_{ij}^2(\betab^* + \tilde\deltab_k)
   f_i( \tilde \deltab_k)   \abr{
   x_i  \tilde\deltab_k
  }
  f_{\max}
\end{align*}
where $(i)$ follows by dropping a negative term, $(ii)$ follows by
the mean value theorem, and  $(iii)$ from the assumption that the
conditional density is bounded stated in Condition~{\bf(E)}.

Furthermore, conditional on $\{\Xb_i\}_{i\in[n]}$ we have that almost surely.
$|Z_{ijk}| \leq 4 \max_{ij}|a_{ij}(\betab^* + \tilde\deltab_k)|$.
We will work on the event 
\begin{align}  \label{eq:proof:lem_lin:event1}
  \Acal = \cbr{  \max_{i \in [n], j \in [p]}
    |a_{ij}(\betab^* + \tilde \deltab_k) -\Sigmab^{-1}_{ij} (\betab^*) | \leq C_n},  
\end{align}
which holds with probability at $1-\delta$ using
Lemma \ref{lemma2}.   For a
fixed $j$ and $k$ Bernstein's inequality \citep[see, for
example, Section 2.2.2 of][]{vanderVaart1996Weak} gives us
\begin{align*}
  \abr{n^{-1}\sum_{i\in[n]} Z_{ijk}}
  \leq C\rbr{
  \sqrt{
    \frac{f_{\max}\log(2/\delta)}{n^2}
    \sum_{i\in[n]}a_{ij}^2(\betab^* + \tilde\deltab_k)
    \abr{x_i  \tilde\deltab_k}
  }
  \bigvee \frac{\max_{i\in[n],j\in[p]}|a_{ij}(\betab^* + \tilde\deltab_k)|}{n}
  \log(2/\delta)
  }
\end{align*}
with probability $1-\delta$. On the event $\mathcal{A}$
\begin{align*}
    \sum_{i\in[n]}a_{ij}^2(\betab^* + \tilde\deltab_k)
    \abr{x_i  \tilde\deltab_k} 
    & \leq C_n^2\sqrt{\tilde\deltab_k^\top  W(\betab^*+\tilde\deltab_k) W^\top(\betab^*+\tilde\deltab_k)\tilde\deltab_k} \leq (1+o_P(1))C_n^2 r_n\Lambda_{\max}^{1/2}(\Sigmab (\betab^*)),
\end{align*}
where the line follows using the Cauchy-Schwartz inequality and
inequality (58a) of \citet{wainwright_2009} and Lemma \ref{lemma2}.  Combining all of the results above,
with probability $1-2\delta$ we have that
\begin{align*}
  \abr{n^{-1}\sum_{i\in[n]}Z_{ijk}}
  \leq C\rbr{
  \sqrt{
    \frac{C_n^2r_n\log(2/\delta)}{n}
  }
  \bigvee \frac{C_n\log(2/\delta)}{n} }.
\end{align*} 
Using the union bound over $j\in[p]$ and
$k\in[N_\delta]$, with probability $1-2\delta$, we have
\[
T_{11} \leq C \rbr{
  \sqrt{
    \frac{C_nr_n\log(2  
      N_\delta p /\delta)}{n}
  }
  \bigvee \frac{C_n\log(2 
      N_\delta p/\delta)}{n} }.
\]
We deal with the term $T_{12}$ in a similar way. 
 For a fixed $k$ and $j$, conditional on the event $\mathcal{A}$ we apply
Bernstein's inequality to obtain
\[
\abr{n^{-1}\sum_{i\in[n]}\tilde Z_{ijk}} \leq C \rbr{
\sqrt{\frac{C_n^2r_n^2\log(2/\delta)}{n}} 
\bigvee
\frac{C_n\log(2/\delta)}{n}}
\]
with probability $1-\delta$, since
 on the event $\Acal$ in \eqref{eq:proof:lem_lin:event1} we have that
$\abr{
  \tilde Z_{ijk}
} \leq 4C_n
$
and
\begin{align*}
\Var
\sbr{
\tilde Z_{ijk}
} 
& \leq 
\EE
\sbr{
        a_{ij}^2(\betab^* + \tilde\deltab_k)
        \rbr{
          f_i( \tilde\deltab_k) \PP(\varepsilon_i \leq X_i  \tilde\deltab_k)
          -
          f_i(  0) \PP(\varepsilon_i \leq 0)
        }^2
} \\
& \leq C_n^2 f_{\max} \left( G_i(  \tilde\deltab_k,  \betab^* , 0) - G_i( 0,  \betab^* , 0) \right)^2 r_n^2\Lambda_{\max}(\Sigmab (\betab^*)) \leq C C_n^2 K_1^2r_n^2 .
\end{align*}
where in the last step we utilized Condition ({\bf E}) with $z =r_n$.
The union bound over  $k\in[N_\delta]$, and $j\in[p]$,
gives us
\begin{equation*}
T_{12} \leq C \rbr{
\sqrt{\frac{C_n^2 K_1^2 r_n^2
\log(2  N_\delta p /\delta)}{n}} 
\bigvee
\frac{C_n\log(2 N_\delta p /\delta)}{n} }  
\end{equation*}
with probability at least $1-2\delta$. Combining the bounds on
$T_{11}$ and $T_{12}$, with probability $1-4\delta$,  we have
\begin{align*}
T_{1} & \leq C \rbr{
\sqrt{\frac{C_n^2 (r_n \vee r_n^2K_1^2)
\log(2  N_\delta p /\delta)}{n}} 
\bigvee
\frac{C_n\log(2 N_\delta p /\delta)}{n} },
\end{align*}
since $ r_n = \Ocal_P(1)$.  Let us now focus on bounding $T_2$ term.  Note that $a_{ij}(\betab^* + \deltab_k) =
a_{ij}(\betab^*) + a'_{ij}(\bar\betab_k)\deltab_k$ for
some $\bar \betab_k$ between $\betab^* + \deltab_k$ and $\betab^*$. Let 
\[
W_{ij}( \deltab) = 
      a'_{ij}(\bar\betab_k)\deltab_k
      \rbr{ 
         f_i(  \deltab ) g_i( \deltab )  -  f_i( \zero) g_i(\zero) 
      } ,
\]
and 
\[
Q_{ij}( \deltab) = 
      a_{ij}(\betab^*)
      \rbr{ 
         f_i(  \deltab ) g_i( \deltab )  -  f_i( \zero) g_i(\zero) 
      } .
\]
Let $\mathbb{Q}(\deltab) = Q (\deltab)- \EE[Q(\deltab)]$.
For a fixed $j$, and $k$ we have $  \sup_{\deltab \in \mathcal{B} (\tilde \deltab_k, r_n\xi_n)}
  \abr{ \eb_j^\top \rbr{
       \mathbb{V}_n(  \deltab) -  \mathbb{V}_n( 
      \tilde\deltab_k)}
  }$ 
  is upper bounded with
\begin{align*}  
  \underbrace{
  \sup_{\deltab \in \mathcal{B}(\tilde \deltab_k, r_n\xi_n)}
  \abr{
    n^{-1}\sum_{i\in[n]}
    \mathbb{Q}_{ij}(\deltab)
    -
        \mathbb{Q}_{ij}(\tilde\deltab_k)}}
   _{T_{21}}   + 
  \underbrace{
  \sup_{\deltab \in \mathcal{B}(\tilde \deltab_k, r_n\xi_n)}
  \abr{
    n^{-1}\sum_{i\in[n]}
    W_{ij}(\deltab)
    -
      \EE\sbr{
        W_{ij}(\deltab)
      }
  }
  }_{T_{22}}.
\end{align*}

We will deal with the two terms separately. 
Let $Z_i =  \max\{ \varepsilon_i, - X_i \betab^*\}$
\begin{align*}
f_i(\deltab) g_i(\deltab) = \ind\{X_i \deltab \geq Z_i\} - \ind\left\{ X_i \deltab \geq - X_i \betab^*\right\}.
 \end{align*}
 Observe that the distribution of $Z_i$ is the same as the distribution of $|\varepsilon_i|$ due to the Condition ({\bf E}).
 Moreover,
 \[\abr{x_i  (\deltab-\tilde\deltab_k)} \leq K
\norm{\deltab-\tilde\deltab_k}_2
\sqrt{\abr{\mbox{supp}(\deltab-\tilde\deltab_k)}}   
\]
where $K$ is a constant such that    $\max_{i,j}|x_{ij}| \leq K$.
Hence,

\begin{align}\label{eq:bound1}
 &\max_{k \in [N_\delta]}\max_{i\in[n]}
  \sup_{\deltab \in \mathcal{B}(\tilde \deltab_k, r_n\xi_n)}
  \abr{
     x_i  \deltab
    -
      x_i  \tilde\deltab_k
  } \leq 
    r_n\xi_n\sqrt{t} \max_{i,j}|x_{ij}|
  \leq C
    r_n\xi_n
    \sqrt{ t  }
 =: \tilde L_n.
\end{align}
For $T_{21}$, we will use
the fact that $\ind\{a < x\}$ and $\PP\{Z < x\}$ are monotone function in $x$. 
Therefore,
\begin{align*}
T_{21} 
&\leq
n^{-1}\sum_{i\in[n]}\bigg[  
\abr{a_{ij}(\betab^*)}
\Big(
  \ind\cbr{Z_i \leq   x_i  \tilde\deltab_k + \tilde L_n}-
  \ind\cbr{-X_i\betab^* \leq  x_i  \tilde\deltab_k  - \tilde L_n}-
  \ind\cbr{Z_i \leq  x_i  \tilde\deltab_k } \\
  &   +
  \ind\cbr{-X_i\betab^* \leq  x_i  \tilde\deltab_k  }   -
  \PP\sbr{Z_i \leq  x_i  \tilde\deltab_k - \tilde L_n}+
  \PP\sbr{-X_i \betab^* \leq x_i  \tilde\deltab_k+ \tilde L_n} \\
  &  +
  \PP\sbr{Z_i \leq  x_i  \tilde\deltab_k }-
  \PP\sbr{-X_i \betab^*\leq  x_i  \tilde\deltab_k  }
\Big)
\bigg] 
\end{align*}
Furthermore, by adding and substracting appropriate terms we can decompose the right hand side above into two terms. The first,
\begin{align*}
& 
n^{-1}\sum_{i\in[n]}\bigg[  
\abr{a_{ij}(\betab^*)}
\Big(
  \ind\cbr{Z_i \leq   x_i  \tilde\deltab_k + \tilde L_n}-
  \ind\cbr{ -Z_i \betab^*\leq  x_i  \tilde\deltab_k - \tilde L_n}  -
   \ind\cbr{Z_i \leq  x_i  \tilde\deltab_k }\\
  &  +
  \ind\cbr{-X_i\betab^* \leq  x_i  \tilde\deltab_k  }    -
  \PP\sbr{Z_i \leq  x_i  \tilde\deltab_k + \tilde L_n}+
  \PP\sbr{-X_i \betab^* \leq x_i  \tilde\deltab_k- \tilde L_n}\\
  &  +
  \PP\sbr{Z_i \leq  x_i  \tilde\deltab_k }-
  \PP\sbr{-X_i \betab^*\leq  x_i  \tilde\deltab_k  }
\Big)
\bigg] 
\end{align*}
and the second
\begin{align*}
&
n^{-1}\sum_{i\in[n]}\bigg[  
\abr{a_{ij}(\betab^*)}
\Big(
  \PP\sbr{Z_i \leq  x_i  \tilde\deltab_k + \tilde L_n}-
  \PP\sbr{-X_i \betab^* \leq x_i  \tilde\deltab_k- \tilde L_n} \\
  &  -
  \PP\sbr{Z_i \leq  x_i  \tilde\deltab_k - \tilde L_n}+
  \PP\sbr{-X_i \betab^* \leq x_i  \tilde\deltab_k+ \tilde L_n}
\Big)
\bigg].
\end{align*}
The first term in the display above can be bounded in a similar way to
$T_1$ by applying Bernstein's inequality and hence the details are
omitted. For the second term we have a bound
$CC_n\tilde L_n$, since
$\abr{a_{ij}(\betab^*)} \leq C_n $ by the definition of $a_{ij}$ and Lemma \ref{lemma2} and 
$  \PP\sbr{Z_i \leq  x_i  \tilde\deltab_k + \tilde L_n} - \PP\sbr{Z_i \leq  x_i  \tilde\deltab_k - \tilde L_n} \leq C \|f_{|\varepsilon_i|}\| _\infty\tilde L_n \leq 2C f_{\max} \tilde L_n$. In the last inequality we used the fact that $\|f_{|\varepsilon_i|}\| _\infty \leq 2\|f_{ \varepsilon_i }\| _\infty$.
Therefore, with probability $1-2\delta$,
\begin{align*}
  T_{21} 
  \leq C\rbr{
  \sqrt{
    \frac{f_{\max}C_n^2\tilde L_n\log(2/\delta)}{n}
  }
  \bigvee \frac{C_n\log(2/\delta)}{n} 
  \bigvee  f_{\max} \tilde L_n}.
\end{align*} 
A bound on $T_{22}$ is obtain similarly to that on $T_{21}$. The only
difference is that we need to bound $a'_{ij}(\bar\betab_k)\deltab_k$, for $\bar \betab_k = \alpha \betab^* + (1-\alpha) (\betab^* + \tilde\deltab_k) $ and $\alpha \in (0,1)$,   instead of $|a_{ij}(\betab^*)|$. Observe that 
$a_{ij}(\betab) {\hat \tau_j}^{2}= -  \hat \gamma_{(j),i}$.
Moreover, by construction $\hat \tau_j$ is a continuous, differentiable and convex function of $\betab$ and is bounded away from zero by Lemma \ref{lemma2}. Additionally, $\hat \gammab_{(j)}$ is a convex function of $\betab$ as a set of solutions of a minimization of a convex function over a convex constraint is a convex set. Moreover, $\hat \gamma_{j}$ is a bounded random variable according to Lemma \ref{lemma2}.
Hence,
$|a'_{ij}(\betab^*)| \leq K' $,  for a large enough constant $K'$.
 Therefore, for a large enough constant $C$ we have
\begin{align*}
  T_{22} 
  \leq C\Bigg(&
  \sqrt{
    \frac{f_{\max} r_n^2 \zeta_n^2   \tilde L_n
      \log(2/\delta)}{n}
  }  
  \bigvee \frac{ \tilde L_n \log(2/\delta)}{n}  
  \bigvee  f_{\max} C_n\tilde L_n \Bigg).
\end{align*} 
A bound on $T_2$ now follows using a union bound over $j\in[p]$  and $k\in[N_\delta]$.

We can choose   $\xi_n = n^{-1}$, which gives us
$N_\delta \lesssim \rbr{pn^2}^t $. With these choices, the term $T_2$
is negligible compared to $T_1$ and we obtain 
\begin{align*}
T & \leq C  \rbr{
\sqrt{\frac{C_n^2 (r_n \vee r_n^2K_1^2) t 
\log(n p /\delta)}{n}} 
\bigvee
\frac{C_n t \log(2n p /\delta)}{n} },
\end{align*}
which completes the proof.

\end{proof}

{

}

\end{document}